\newenvironment{tbd}%
        {%
            \ifthenelse{\isundefined{\showtbds}}%
                    {\comment}%
                    {\begingroup\color{red}[}%
                    }%
         {%
            \ifthenelse{\isundefined{\showtbds}}%
                    {\endcomment}%
                    {]\endgroup}%
          }
\newenvironment{journal}%
        {%
            \ifthenelse{\isundefined{\showjournal}}%
                    {\comment}%
                    {\begingroup\color{blue}[}%
                    }%
         {%
            \ifthenelse{\isundefined{\showjournal}}%
                    {\endcomment}%
                    {]\endgroup}%
          }
\tikzset{
    rot/.style={anchor=south, rotate=90, inner sep=.5mm}
} 
\newtheorem{assumption}[theorem]{Assumption}
\spnewtheorem*{remark*}{Remark}{\itshape}{\rmfamily}
\newcommand{\rank}{\mathrm{rank}}
\newcommand{\Tr}{\mathrm{Tr}}
\newcommand{\id}{\mathrm{id}}
\newcommand{\diag}{\mathrm{diag}}
\newcommand{\mc}[1]{\mathcal{#1}}
\newcommand{\mbb}[1]{\mathbb{#1}}
\newcommand{\TODO}[1]{{\color{red}{[#1]}}}
\newcommand{\argmin}[1]{\underset{#1}{\operatorname{argmin}}}
\newcommand{\transpose}{^\top\! }
\newcommand{\inner}[2]{\left\langle{#1},{#2}\right\rangle}
\newcommand{\innersmall}[2]{\langle{#1},{#2}\rangle}
\newcommand{\Proj}{\mathrm{Proj}}
\newcommand{\Retr}{\mathrm{R}}
\newcommand{\T}{\mathrm{T}}
\newcommand{\Gr}{\mathrm{Gr}}
\newcommand{\St}{\mathrm{St}}
\newcommand{\Or}{{\mathrm{O}(r)}}
\newcommand{\Rnn}{{\mathbb{R}^{n\times n}}}
\newcommand{\Rmn}{{\mathbb{R}^{m\times n}}}
\newcommand{\Rnr}{\mathbb{R}^{n\times r}}
\newcommand{\Rmm}{{\mathbb{R}^{m\times m}}}
\newcommand{\Rmr}{\mathbb{R}^{m\times r}}
\newcommand{\Rn}{{\mathbb{R}^n}}
\newcommand{\Rm}{{\mathbb{R}^m}}
\newcommand{\D}{\mathrm{D}}
\newcommand{\calB}{\mathcal{B}}
\newcommand{\calE}{\mathcal{E}}
\newcommand{\calK}{\mathcal{K}}
\newcommand{\calL}{\mathcal{L}}
\newcommand{\calM}{\mathcal{M}}
\newcommand{\calS}{\mathcal{S}}
\newcommand{\calX}{\mathcal{X}}
\newcommand{\dist}{\mathrm{dist}}
\newcommand{\conv}{\mathrm{conv}}
\newcommand{\reals}{\mathbb{R}} 
\newcommand{\im}{\operatorname{im}}
\newcommand{\Rmnlr}{\reals^{m \times n}_{\leq r}}
\newcommand{\Rmnls}{\reals^{m \times n}_{\leq s}}
\newcommand{\Rttlt}{\reals^{3 \times 3}_{\leq 2}}
\newcommand{\gmin}{g_{\mathrm{min}}}
\newcommand{\deltamax}{\delta_{\mathrm{max}}}
\newcommand{\gammamin}{\gamma_{\mathrm{min}}}
\newcommand{\kappamin}{\kappa_{\mathrm{min}}}
\newcommand{\floor}[1]{\left\lfloor #1 \right\rfloor}
\newcommand{\ceil}[1]{\left\lceil #1 \right\rceil}
\newcommand{\lambdamin}{\lambda_{\mathrm{min}}}
\newcommand{\RR}{\mathbb{R}}
\renewcommand{\tt}[1]{\mathrm{#1}}
\newcommand{\DPGD}{{P\textsuperscript{2}GD}} 
\newcommand{\flatten}{\operatorname{flatten}}
\title{Finding stationary points on bounded-rank matrices:\\ A geometric hurdle and a smooth remedy}
\author{Eitan Levin \and Joe Kileel \and Nicolas Boumal}
\institute{
E. Levin \at Department of Computing and Mathematical Sciences, California Institute of Technology\\ \email{eitanl@caltech.edu}
\and 
J. Kileel \at Department of Mathematics and Oden Institute for Computational Engineering and Sciences, University of Texas at Austin\\ \email{jkileel@math.utexas.edu}
\and
N. Boumal \at Institute of Mathematics, \'Ecole polytechnique f\'ed\'erale de Lausanne (EPFL)\\ \email{nicolas.boumal@epfl.ch}
}
\titlerunning{Finding stationary points on bounded-rank matrices}
\date{\today}
\begin{document}

\maketitle

\begin{abstract}
%
We consider the problem of provably finding
a stationary point of a smooth function to be minimized on
the variety of bounded-rank matrices.
%
%
This turns out to be unexpectedly delicate.
We trace the difficulty back to a geometric obstacle:
On a nonsmooth set, there may be sequences of points along which standard measures of 
stationarity tend to zero, but whose limit points are not stationary. 
We name such events \emph{apocalypses}, as they can cause optimization algorithms to converge to non-stationary points.
We illustrate this explicitly for an existing optimization algorithm on bounded-rank matrices. 
To provably find stationary points, we modify a trust-region method on a standard smooth parameterization of the variety.
The method relies on the known fact that second-order stationary points on the parameter space map to stationary points on the variety.
Our geometric observations and proposed algorithm generalize beyond bounded-rank matrices.
We give a geometric characterization of apocalypses on general constraint sets, 
which implies that Clarke-regular sets do not admit apocalypses. 
Such sets include smooth manifolds, manifolds with boundaries, and convex sets.
Our trust-region method supports parameterization by any complete Riemannian manifold. 

\end{abstract}

\keywords{low-rank matrices, singular algebraic varieties, Riemannian optimization, variational analysis, Clarke regularity, tangent cones, nonsmooth sets, stationarity}

\subclass{65K10, 49J53, 90C26, 90C46}

\section{Introduction}
\label{sec:intro}

Given a smooth cost function $f \colon \Rmn \to \reals$ with bounded sublevel sets, consider the optimization problem
\begin{align}
	\min_{X \in \Rmn} f(X) \ \textrm{ subject to } \ \rank(X) \leq r.
	\tag{R}
	\label{eq:minfrank}
\end{align}
Its search space is the algebraic variety of matrices of size $m \times n$ and rank at most \nolinebreak $r$:
\begin{align*}
    \Rmnlr = \{ X \in \Rmn : \rank(X) \leq r \}.
\end{align*}
Finding a global minimizer may be hard~\cite{gillis2011low}.
We aim for a 
more realistic goal:
\begin{quote}
	\emph{Design an algorithm which, given an arbitrary $X_0 \in \Rmnlr$, generates a sequence $X_0, X_1, X_2, \ldots$ in $\Rmnlr$ with at least one accumulation point that is (first-order) stationary for~\eqref{eq:minfrank}.}
\end{quote}
As we shall review in Section~\ref{sec:downstairs}, a matrix $X \in \Rmnlr$ is stationary for~\eqref{eq:minfrank} if the directional derivative of $f$ at $X$ along each direction in the tangent cone $\T_X\Rmnlr$ of $\Rmnlr$ at $X$ (given by~\eqref{eq:TXRmnlr} below) is nonnegative. 
Endow $\Rmn$ with the inner product $\inner{A}{B} = \Tr(A^\top\! B)$ and the associated (Frobenius) norm $\|A\| = \sqrt{\inner{A}{A}}$.
Then, $f$ has a gradient $\nabla f$ and 
$X \in \Rmnlr$ is stationary if (and only if)
\begin{align*}
    \|\Proj_{\T_X \Rmnlr}\big(-\nabla f(X)\big)\| = 0,
\end{align*}
where $\Proj_{\T_X \Rmnlr}$ denotes orthogonal projection\footnote{The projection exists but may not be unique. Still, our notation is justified as all projections have the same norm: see Appendix~\ref{apdx:cones}. More generally, we let projectors select one projection arbitrarily.} to the tangent cone at $X$ (given by~\eqref{eq:proj_Tx}). 
In contrast to the smooth manifold of matrices of rank equal to $r$, the variety $\Rmnlr$ is not smooth, which complicates the above goal.

The stated goal turns out to be unexpectedly difficult to achieve. 
There is a rich literature focused on particular cost functions $f$ (see ``prior work'' below).
For the general cost functions we target here, there are a few natural candidate algorithms. 
Of these, the only algorithm of which we are aware that is guaranteed to accumulate at stationary points exploits minute details of the geometry of bounded-rank matrices. It was proposed in response to an open question we posed in the first version of this paper~\cite[v1]{levin2021finding} by Olikier et al.~\cite{olikier2022apocalypse}.
We discuss a few of these algorithms now, and defer the rest to ``prior work''.

Projected gradient descent (PGD), also called iterative hard thresholding~\cite[Thm.~3.4]{jain2014IHT}, would iterate 
\begin{align}
    X_{k+1} & = \Proj_{\Rmnlr}\!\left( X_k - \alpha_k \nabla f(X_k) \right)
    \tag{PGD}
    \label{eq:PGD}
\end{align}
with some step sizes $(\alpha_k)_{k\geq 0}$, where $\Proj_{\Rmnlr}$ denotes metric projection to $\Rmnlr$. The latter amounts to a truncated singular value decomposition (SVD).
It appears that the best known result for PGD applied to~\eqref{eq:minfrank} is that its accumulation points are \emph{Mordukhovich stationary}~\cite[Thm.~3.4]{jia2021augmented}.
This is a strictly weaker and arguably unsatisfactory notion of stationarity~\cite{hosseini2019coneslowrank,li2019rankstationarity}.
Indeed, a matrix $X$ of rank $s \leq r$ is Mordukhovich stationary if it is stationary for the problem of minimizing $f$ on $\Rmnls$ (rather than $\Rmnlr$) and $\rank(\nabla f(X)) \leq \min(m, n) - r$~\cite[\S4]{hosseini2019coneslowrank}.
For $s < r$, there may still exist descent directions in $\T_X\Rmnlr$.

Schneider and Uschmajew~\cite{schneider2015convergence} propose a variant of PGD we refer to as {\DPGD} because it involves two projections.
Their method iterates the following:
\begin{align}
	X_{k+1} & = \Proj_{\Rmnlr}\!\left( X_k + \alpha_k \Proj_{\T_{X_k}\Rmnlr}(-\nabla f(X_k)) \right),
    \tag{{\DPGD}}
	\label{eq:PPGD}
\end{align}
with step sizes $(\alpha_k)_{k\geq0}$ determined through a line-search procedure.
This algorithm has a number of desirable qualities detailed in~\cite{schneider2015convergence}. 
It is arguably more natural than PGD in the sense that its iterates depend on $f$ only through the restriction $f|_{\Rmnlr}$ (see Appendix~\ref{apdx:cones}).
We can interpret it as an extension of Riemannian gradient descent from the smooth manifold of rank-$r$ matrices~\cite{absil_book,vandereycken2013low} to its closure, namely, $\Rmnlr$.
Unfortunately, the convergence of {\DPGD} to a stationary point is proved in~\cite{schneider2015convergence} only under the assumption that the limit point has rank \nolinebreak $r$. 

One may wonder if PGD and {\DPGD} do, in fact, converge to stationary points despite the gap in their theory.
In this paper, we show through an explicit example that this is not the case for {\DPGD}, and we trace back the fundamental obstacle to the delicate geometry of $\Rmnlr$.
This \emph{geometric hurdle} forces a certain level of sophistication in algorithms to solve~\eqref{eq:P} and their analysis.

Acting on this observation, Olikier et al.\ modify {\DPGD} to achieve the stated goal by taking the finer structure of $\Rmnlr$ into account. Their algorithm projects $X_{k+1}$ obtained from~\eqref{eq:PPGD} onto $\RR^{m\times n}_{\leq i}$ for each $i$ between the $\Delta$-rank\footnote{The $\Delta$-rank of a matrix is the number of its singular values that are at least $\Delta$.} of $X_{k+1}$ and $\rank(X_{k+1})$, for a user-chosen $\Delta>0$. The next iterate in their algorithm is chosen to be the projection that has the lowest cost. 

Alternatively, we propose to achieve the stated goal by optimizing over a parametrization of $\Rmnlr$ by a smooth manifold. A common example is the map $(L,R)\mapsto LR^\top$ with $(L,R) \in \Rmr \times \Rnr$. 
This is what we call the \emph{smooth remedy}.
Our proposed algorithm and its analysis generalize beyond bounded-rank matrices, but require second-order information about $f$. For the particular case of bounded-rank optimization, we prove the following result in Section~\ref{sec:examples}:
\begin{theorem}\label{thm:intro_informal_alg}
	There exists an algorithm with the following property.
	Given any three times continuously differentiable $f \colon \Rmn \to \reals$ and any $X_0 \in \Rmnlr$ such that $\{ X \in \Rmnlr : f(X) \leq f(X_0) \}$ is bounded,
	the algorithm generates a sequence $(X_k) \subset \Rmnlr$ whose accumulation points (of which there exists at least one) are first-order stationary for~\eqref{eq:minfrank}.
	Moreover, $f$ is non-increasing along the sequence.
\end{theorem}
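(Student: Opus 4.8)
The plan is to realize the abstract goal through the parametrization $\varphi\colon \Rmr \times \Rnr \to \Rmnlr$, $\varphi(L,R) = LR^\top$, lifting the problem~\eqref{eq:minfrank} to the unconstrained problem of minimizing $g = f \circ \varphi$ on the linear (hence complete Riemannian) space $\calE = \Rmr \times \Rnr$. First I would invoke the known fact, recalled in the introduction (and to be proved in detail in an earlier section), that if $(L,R)$ is a second-order stationary point of $g$ on $\calE$, then $X = LR^\top$ is first-order stationary for~\eqref{eq:minfrank}; this is the ``smooth remedy'' reduction. It therefore suffices to exhibit an algorithm on $\calE$ that, from an arbitrary initial parameter $(L_0,R_0)$ with $\varphi(L_0,R_0) = X_0$, produces a sequence with at least one second-order stationary accumulation point, along which $g$ is non-increasing.

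Next I would run a standard Riemannian trust-region method (RTR) on $\calE$ with the cost $g$, using exact or sufficiently accurate Hessian information (this is where three-times differentiability of $f$ enters: $g = f\circ\varphi$ is then $C^2$ with locally Lipschitz Hessian on sublevel sets, since $\varphi$ is polynomial). The key convergence input is the classical guarantee for trust-region methods (Boumal--Absil--Cartis-type analysis): under a locally Lipschitz gradient and Hessian and a bounded sublevel set, RTR generates a sequence along which $g$ is non-increasing and whose accumulation points are second-order stationary, i.e. $\grad g = 0$ and $\Hess g \succeq 0$. The one nonstandard point to check is \emph{boundedness of the RTR iterates}: the hypothesis only gives that $\{X \in \Rmnlr : f(X) \le f(X_0)\}$ is bounded downstairs, whereas $\varphi$ has unbounded fibers (one can scale $L \mapsto tL$, $R \mapsto t^{-1}R$). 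I would handle this by noting that the iterates live in the sublevel set $\{g \le g(L_0,R_0)\} = \varphi^{-1}(\{f \le f(X_0)\} \cap \Rmnlr)$, whose image under $\varphi$ is bounded; the iterates themselves need not be bounded, but their images $X_k = L_k R_k^\top$ do accumulate. To get an accumulation point of $(X_k)$ that is the image of a second-order stationary parameter, the cleanest route is to add a mild regularization or balancing step that keeps $(L_k,R_k)$ in a bounded set without changing the product (e.g. periodically rebalancing via the polar/QR factors so that $L_k^\top L_k = R_k^\top R_k$), since the balanced preimages of a bounded set are bounded; then RTR's accumulation point $(L_*,R_*)$ is second-order stationary and $X_* = L_* R_*^\top$ is the desired stationary accumulation point of $(X_k)$, with $f(X_k) = g(L_k,R_k)$ non-increasing.

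I expect the main obstacle to be precisely this mismatch between the bounded sublevel set downstairs and the unbounded fibers of $\varphi$: one must argue carefully that the (possibly regularized or rebalanced) trust-region iterates remain in a compact set so that the standard second-order convergence theory applies, and that the rebalancing does not interfere with monotonicity of $f$ along $(X_k)$ nor with the second-order stationarity of the limit. A secondary technical point is verifying the regularity of $g = f \circ \varphi$ (Lipschitz gradient and Hessian on the relevant compact set), which follows from $f \in C^3$ and $\varphi$ polynomial, but should be stated explicitly. Once these are in place, the theorem follows by combining the RTR convergence guarantee on $\calE$ with the second-order-to-first-order transfer result under $\varphi$, and monotonicity of $g$ transfers verbatim to $f$ along $X_k = \varphi(L_k,R_k)$.
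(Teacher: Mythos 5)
Your strategy coincides with the paper's: lift via $\varphi(L,R)=LR^\top$, run a second-order trust-region method on the lifted cost $g=f\circ\varphi$, keep iterates bounded by rebalancing, and invoke the transfer from second-order stationary points upstairs to first-order stationary points on $\Rmnlr$. The paper's proof of Theorem~\ref{thm:intro_informal_alg} proceeds exactly this way, through Algorithm~\ref{algo:TRbalanced}, Corollary~\ref{cor:optimresult}, and the result of Ha, Liu and Barber.

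There are two places where the proposal, as written, would not close. First, you cite a ``Boumal--Absil--Cartis-type'' guarantee for the claim that accumulation points of RTR are \emph{exactly} second-order stationary. That style of analysis gives worst-case complexity bounds for reaching $(\epsilon_1,\epsilon_2)$-approximate stationarity with a preset tolerance; it does not by itself supply the asymptotic statement that an infinitely-running trajectory has exactly $2$-critical accumulation points. The paper flags this distinction explicitly and deliberately chooses the nonstandard trust-region method of Curtis et al.~\cite{curtis2018concise} precisely because it has the accumulation-point guarantee you need (re-derived here as Theorem~\ref{thm:optimtheorem1}). Second, ``periodically rebalancing'' is not a free perturbation of the algorithm: injecting $(L,R)\mapsto\Phi(L,R)$ into the iteration alters the trajectory, so the convergence analysis must be re-verified under that change. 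The paper handles this by building the rebalancing into Algorithm~\ref{algo:TRbalanced} as a hook, with the sole requirement being Assumption~\ref{assu:Phidecrease} ($g(\Phi(y))\le g(y)$), and then re-checking the Curtis et al.~estimates in the hooked setting (Appendix~\ref{appdx:hookedRTR}). You identify both obstacles correctly in prose, so the architecture is sound; but the two citations you lean on would not supply what you need without this additional work, which is exactly what Sections~\ref{sec:optimization}--\ref{sec:rebalancing_maps} are there to do.
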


Much of our technical considerations extend beyond the bounded-rank variety.
Therefore, wherever meaningful, we discuss the more general problem
\begin{align}
    \min_{x \in \calX} f(x),
    \tag{P}
    \label{eq:P}
\end{align}
where $\calX$ is a subset of a Euclidean space $\calE$ equipped with the inner product $\inner{\cdot}{\cdot}$ and associated norm $\|\cdot\|$.
We need $f$ to be defined merely in a neighborhood of $\calX$ in $\calE$.
Our results depend on $f$ only through its values on $\calX$.
Thus, we can think of $f$ as a (suitably differentiable) extension of $f|_{\calX}$ to a neighborhood of $\calX$, without fear that the somewhat arbitrary choice of extension affects algorithms or conclusions.

\subsection*{The geometric hurdle}

In Section~\ref{subsec:apocalypse_on_mats} we exhibit a (polynomial) cost function $f$ and an initial $X_0$ for which {\DPGD} produces a sequence with the following unfortunate properties:
\begin{enumerate}
	\item All matrices in the sequence have rank $r$,
	\item the sequence converges to a matrix $X$ of rank (strictly) less than $r$,
	\item the stationarity measure $\|\Proj_{\T_{X_k}\Rmnlr}(-\nabla f(X_k))\|$ converges to zero, 
	\item yet $X$ is not stationary.
\end{enumerate}
We call such events \emph{apocalypses}, and we call the points to which such sequences converge \emph{apocalyptic}.
They are an obstacle to optimization algorithm design because it is not immediately clear how to recognize and correct for such bad sequences without fail.
Indeed, along the sequence, all seems well.


As the example suggests, issues can arise when algorithms generate sequences with accumulation points of rank strictly less than $r$, which is where the nonsmooth nature of $\Rmnlr$ is revealed.
This can happen even if the minima of $f$ have rank $r$.

%
%

%
%

Intuitively (though imprecisely), apocalypses arise because limits of tangent cones along convergent sequences may fail to contain the tangent cone at their limit. Hence there may be descent directions in the tangent cone at the limit which are `missed' by the tangent cones along the sequence.
In Section~\ref{sec:apocs_character} we characterize the geometric properties of sets in general and of $\Rmnlr$ in particular that lead to the existence of apocalypses.

We emphasize that apocalyptic points are not saddles; in fact they are not even stationary on the constraint set.
While saddle points can arise on any constraint set (including smooth manifolds), apocalypses can only arise on certain nonsmooth sets.
In particular, we show that Clarke regular sets---a well-known class which includes convex sets and embedded submanifolds with or without boundary~\cite{hesse_noncvx_regularity,rockafellar2009variational}---do not have apocalypses.

\subsection*{A smooth remedy}

In Section~\ref{sec:optimization} we develop an algorithm which meets our stated goal, at the expense of requiring access to second-order information about $f$.
Our approach is to \emph{lift}~\eqref{eq:minfrank} to a smooth space, through a parameterization of $\Rmnlr$.
We let $\varphi \colon \calM \to \Rmn$ be a smooth map on a manifold $\calM$ such that $\varphi(\calM) = \Rmnlr$.
Such a $\varphi$ is a \emph{smooth lift} of $\Rmnlr$.
We provide three examples (the last one is the desingularization lift proposed by Khrulkov and Oseledets~\cite{desingular}):
\begin{align}
	\varphi(L, R) & = LR^\top, & \calM & = \Rmr \times \Rnr, \nonumber \\
	\varphi(U, W) & = UW^\top, & \calM & = \St(m, r) \times \Rnr, \label{eq:low_rk_lifts} \\
	\varphi(X, K) & = X,       & \calM & = \{ (X, K) \in \Rmn \times \Gr(n, n-r) : K \subseteq \ker(X) \}. \nonumber
\end{align}
Above, $\St(m, r) = \{ U \in \Rmr : U^\top U = I_r \}$ is the Stiefel manifold of orthonormal matrices, and $\Gr(n, n-r)$ is the Grassmann manifold of subspaces of dimension $n-r$ in $\Rn$~\cite{absil_book}.
Using any such lift, we rewrite~\eqref{eq:minfrank} equivalently as:
\begin{align}
	\min_{y \in \calM} g(y) \ \textrm{ with } \  g = f \circ \varphi \colon \calM \to \reals.
	\tag{Q}
	\label{eq:Q}
\end{align}
This is an optimization problem on a smooth manifold---see~\cite{absil_book,optimOnMans} for book-length introductions to that topic.
In particular, problem~\eqref{eq:Q} has no apocalypses.
More generally, problem~\eqref{eq:Q} is a lift of~\eqref{eq:P} if $\varphi \colon \calM \to \calE$ is smooth and $\varphi(\calM) = \calX$.

We review in Section~\ref{sec:optimization} what it means for $y \in \calM$ to be second-order stationary for~\eqref{eq:Q}.
For the lift $(L, R) \mapsto LR^\top$, Ha, Liu and Barber~\cite[\S2]{ha2020criticallowrank} show that if $(L, R)$ is second-order stationary for~\eqref{eq:Q} then $LR^\top$ is first-order stationary for~\eqref{eq:minfrank}.
Using similar arguments, we show in~\cite[\S2.3]{eitan_thesis} that the same is true for all lifts in~\eqref{eq:low_rk_lifts}.

Acting on this observation, we construct in Section~\ref{sec:optimization} a modified Riemannian trust-region method for~\eqref{eq:Q} whose accumulation points are second-order stationary under mild conditions on $f$.
It immediately follows that sequences $(y_k) \subset \calM$ generated by that algorithm map through $\varphi$ to sequences of matrices $(X_k)$ in $\Rmnlr$ whose accumulations points are stationary for~\eqref{eq:minfrank}, proving Theorem~\ref{thm:intro_informal_alg} above.

The main difficulty in proving Theorem~\ref{thm:intro_informal_alg} is that the lifted cost function $g = f \circ \varphi$ may not retain desirable properties of $f$ such as having bounded sublevel sets or Lipschitz continuous derivatives. 
We overcome this difficulty in Section~\ref{sec:rebalancing_maps} by introducing \emph{rebalancing maps}, which replace each iterate $y_k$ with a well-chosen element of the fiber $\varphi^{-1}(\varphi(y_k))$.
This ensures that iterates remain in a compact set. 
Our algorithm and its analysis apply to optimization on complete Riemannian manifolds in general.

On an aesthetic note, we design the algorithm such that $y_{k+1}$ depends on $y_k$ only through $X_k = \varphi(y_k)$.
In other words: while the optimization algorithm technically runs on $\calM$, it does correspond to a genuine algorithm on $\Rmnlr$ in the sense that $X_{k+1}$ is a function of $X_k$ but not of the particular $y_k \in \varphi^{-1}(X_k)$ which the algorithm happens to be considering.

In~\cite{schneider2015convergence}, Schneider and Uschmajew say that, because of what we call apocalypses, ``\emph{it will be typically impossible to prove convergence to a rank-deficient critical point by a method which (in regular points) only `sees' projections of the gradient on tangent spaces.}''
Our algorithm overcomes this limitation by using second-order information about $f$ through a smooth lift.
We quantify this statement in Section~\ref{subsec:approxstationarity} by proving a strengthened version of Ha et al.'s aforementioned result~\cite{ha2020criticallowrank}.
\begin{tbd}
proving
\end{tbd}
\begin{journal}
stating
\end{journal}



\subsection*{Prior work}









It is natural to try to ``fix'' PGD or {\DPGD} to reach the stated goal, and indeed several modifications of these algorithms have been studied in the literature. As mentioned above, the only such modification we are aware of that succeeds in doing so was proposed in~\cite{olikier2022apocalypse}.
We mention a few earlier algorithms and analyses below.

In~\cite{schneider2015convergence,gradient_sampling} the authors propose extensions of Riemannian gradient descent to varieties and (more generally) to stratified spaces, but only guarantee that accumulation points of their algorithm are critical on their own (smooth) stratum.
For $\Rmnlr$, this means that if the rank of an accumulation point is $s<r$, then we can only conclude that the point is stationary on $\RR^{m\times n}_{\leq s}$. 
In~\cite{tan2014riemannian,uschmajew2014line,uschmajew2015greedy} the authors study rank increasing schemes, where a guess for the rank of the solution is iteratively increased, and low-rank stationary points are used to warm-start the algorithm aiming for larger ranks.
While such rank-increasing schemes are popular in practice and were observed to help convergence, no theoretical guarantees for general cost functions are given there either.
In~\cite{ZHOU201672,gao2021riemannian} the authors propose more sophisticated rank-adaptive algorithms that combine both rank-increasing and rank-decreasing steps, but here too cannot guarantee convergence to a stationary point on the entire variety $\Rmnlr$.
In~\cite{barber_concavity_parameter} the authors 
give an algorithm that is guaranteed to converge to a local minimizer when initialized sufficiently close to one, though only when assuming restricted strong convexity and Lipschitz-continuous gradient.
In~\cite{ding2020low} the authors propose an averaged projected gradient descent for $\Rmnlr$, but they only guarantee convergence when the cost function is strongly convex and smooth, and the minimizer of the function on $\Rmnlr$ is the global minimum on all of $\RR^{m\times n}$.
In~\cite[Ch.~3]{eitan_thesis}, we describe our own unsuccessful attempt to develop a rank-adaptive algorithm for $\Rmnlr$---and singular varieties more generally---that would be guaranteed to converge to a stationary point.
We were only able to prove convergence to a stationary point on sets with finitely-many singular points, which does not apply to $\Rmnlr$.
In~\cite{pmlr-v130-bi21a,uschmajew_quadratic_lowrk} the authors analyze restricted isometry-type conditions on the cost function that preclude the existence of spurious critical points or local minima.
These results do not preclude the existence of apocalypses however, since apocalyptic points are not critical.

In~\cite{hou2020fast,hou2021asymptotic}, the authors study convergence of {\DPGD} on $\Rmnlr$ applied to cost functions of the form $f(X)=\frac{1}{2}\|X-X_0\|^2$.
They observe that even if {\DPGD} is initialized at a rank $r$ matrix, it may converge to a non-stationary point with rank strictly less than $r$ which they call a ``spurious critical point''. 
Interestingly, this phenomenon is a special case of apocalypses for the above cost function. While the apocalypses arising for cost functions of the above form can be avoided by using a step size of 1, {\DPGD} cannot avoid the apocalypse we construct in Section~\ref{subsec:apocalypse_on_mats} with any choice of (bounded) step size.

At the heart of apocalypses, there is the observation (detailed in Section~\ref{sec:downstairs}) that the map
\begin{align*}
    X \mapsto \|\Proj_{\T_{X}\Rmnlr}(-\nabla f(X))\|
\end{align*}
may fail to be lower semicontinuous.
As a measure of approximate stationarity, this is problematic:
it is zero if and only if $X$ is stationary, yet it may be arbitrarily close to zero even when there are no stationary points nearby.
This fact is closely related to the discontinuity of the tangent cones of $\Rmnlr$: see~\cite{cont_of_tangent_cones} for an in-depth study.
At the very least, this shows that the above measure must be treated with circumspection, especially as part of a stopping criterion for optimization.
It is natural to ask whether other measures of approximate stationarity may fare better.
Unfortunately, we did not identify promising alternatives.

The smooth lift approach to optimization under rank constraints is by and large the most popular.
Some examples of papers using lifts~\eqref{eq:low_rk_lifts} include~\cite{matrix_factorizations_in_optim,mishra2014fixed,globalOpt_matComp,desingular}, to name just a few.
In~\cite[Ch.~2]{eitan_thesis}
we show that first-order stationary points on the lifts in~\eqref{eq:low_rk_lifts} do not necessarily map to first-order stationary points on $\Rmnlr$, although second-order stationary points do. We also show that there may be local minima on these lifts that do not map to local minima on $\Rmnlr$. For the $LR^\top$ lift, this issue is resolved with balancing (see Example~\ref{ex:LR}).

\section{A geometric hurdle: apocalypses}
\label{sec:downstairs}

In this section, we ask what can go wrong when we attempt to solve the problem~\eqref{eq:P} directly by running a gradient-based algorithm on the possibly nonsmooth set $\calX$.
Since finding the global minimum of $f$ on $\calX$ may be too ambitious, our more modest goal is to find a first-order stationary point of $f$ on $\calX$, as defined below after a couple of preliminary definitions.
\begin{definition} \label{def:cones}
	A set $K \subseteq \calE$ is a \emph{cone} if $v \in K \implies \alpha v \in K$ for all $\alpha > 0$.
\end{definition}
\begin{definition} \label{def:tangentcone}
	The \emph{tangent cone} to $\calX$ at $x \in \calX$ is the set
	\begin{align*}
		\T_x\calX & = \left\{ v = \lim_{k \to \infty} \frac{x_k - x}{\tau_k} : x_k \in \calX, \tau_k > 0 \textrm{ for all } k, \tau_k \to 0 \textrm{ and the limit exists} \right\}.
	\end{align*}
    This is a closed (but not necessarily convex) cone~\cite[Lem.~3.12]{nonlin_optim}.
\end{definition}
The cone $\T_x\calX$ is also called the \emph{contingent} or \emph{Bouligand} tangent cone \cite[\S2.7]{clarke2008nonsmooth}. If $\calX$ is a smooth manifold, Definition~\ref{def:tangentcone} coincides with the usual tangent space to a manifold~\cite[Ex.~6.8]{rockafellar2009variational}.
\begin{definition} \label{def:criticalP}
	A point $x \in \calX$ is (first-order) \emph{stationary} for~\eqref{eq:P} if $\D f(x)[v] \geq 0$ for all $v \in \T_x\calX$, where $\D f(x)[v] = \lim_{t \to 0} \frac{f(x+tv) - f(x)}{t}$ denotes directional derivatives.
\end{definition}
In words, $x$ is stationary if the cost function is non-decreasing to first order along all tangent directions at $x$.
Local minima of~\eqref{eq:P} are stationary~\cite[Thm.~3.24]{nonlin_optim}. 
The notion of stationarity only depends on the values of $f$ on $\calX$, see Lemma~\ref{lem:nablabarfinvariance}.

With a Euclidean structure on $\calE$, we can redefine stationarity in several equivalent ways.
Let $\nabla f(x)$ denote the Euclidean gradient of $f$ at $x$,
that is, $\nabla f(x)$ is the unique element of $\calE$ such that $\inner{\nabla f(x)}{v} = \D f(x)[v]$ for all $v \in \calE$.
Let us also define polars.
\begin{definition} \label{def:polardual}
	The \emph{polar} of a cone $K \subseteq \calE$ is the closed, convex cone defined by $K^\circ = \{ w \in \calE : \inner{w}{v} \leq 0  \textrm{ for all } v \in K \}$.
\end{definition}
See Lemma~\ref{lem:coneproperties} for basic properties of polar cones. We then have the following equivalent characterizations of stationarity:
\begin{proposition}\label{prop:equiv_defns_of_1crit}
With $f \colon \calE \to \reals$ differentiable, these are equivalent for $x \in \calX$:
\begin{enumerate}[(a)]
    \item $x$ is stationary for $f$ on $\calX$.
    \item $-\nabla f(x)\in (\T_x\calX)^{\circ}$.
    \item $\|\Proj_{\T_x\calX}(-\nabla f(x))\| = 0$. 
\end{enumerate}
\end{proposition}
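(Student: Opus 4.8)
The plan is to establish (a)$\iff$(b) directly from the definition of the Euclidean gradient, and then to prove (b)$\iff$(c) by a short self-contained argument about metric projection onto a closed cone. No deep input is needed; the only nuisance is that $\T_x\calX$ need not be convex, so the usual variational characterization of projections is unavailable.

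For (a)$\iff$(b): since $f$ is differentiable, $\D f(x)[v] = \inner{\nabla f(x)}{v}$ for every $v \in \calE$, so $\D f(x)[v] \ge 0$ for all $v \in \T_x\calX$ is the same as $\inner{-\nabla f(x)}{v} \le 0$ for all $v \in \T_x\calX$, which by Definition~\ref{def:polardual} is exactly $-\nabla f(x) \in (\T_x\calX)^\circ$. This requires no work beyond unwinding definitions.

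For (b)$\iff$(c), I would write $K = \T_x\calX$ and $w = -\nabla f(x)$. First note $0 \in K$: taking $x_k = x$ and any $\tau_k \to 0^+$ in Definition~\ref{def:tangentcone} produces the zero vector. Since $K$ is closed and contains $0$, the metric projection $\Proj_K(w)$ exists (minimize $\|w - v\|$ over the nonempty compact set $K \cap \{ v : \|v\| \le \|w\| \}$) and all minimizers have the same norm (Appendix~\ref{apdx:cones}). Hence (c) is equivalent to the statement that $0$ realizes $\dist(w,K)$, i.e.\ $\|w - v\|^2 \ge \|w\|^2$ for all $v \in K$, i.e.\ after expanding, $\|v\|^2 \ge 2\inner{w}{v}$ for all $v \in K$. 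It remains to see that this is equivalent to $w \in K^\circ$, i.e.\ $\inner{w}{v} \le 0$ for all $v \in K$. The implication $w \in K^\circ \implies \|v\|^2 \ge 0 \ge 2\inner{w}{v}$ is immediate. Conversely, if $\|v\|^2 \ge 2\inner{w}{v}$ for all $v \in K$, fix $v \in K$; since $K$ is a cone, $tv \in K$ for all $t > 0$, so $t^2\|v\|^2 \ge 2t\inner{w}{v}$, hence $t\|v\|^2 \ge 2\inner{w}{v}$, and letting $t \to 0^+$ gives $\inner{w}{v} \le 0$. Chaining the three steps closes the loop (a)$\implies$(b)$\implies$(c)$\implies$(a).

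The step I would flag as the crux, modest as it is, is the $t\to 0^+$ scaling argument in the last paragraph: it is what replaces the convex-projection variational inequality and converts the quadratic inequality coming from minimality of $0$ into the linear polarity condition. Everything else — existence and norm-well-definedness of $\Proj_K$, and the fact that $0 \in \T_x\calX$ — is routine and can be cited from Definition~\ref{def:tangentcone} and Appendix~\ref{apdx:cones}.
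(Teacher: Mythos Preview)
Your proof is correct and mirrors the paper's approach: (a)$\iff$(b) is unwound from definitions exactly as the paper does, and for (b)$\iff$(c) the paper cites Proposition~\ref{prop:normprojtocone} (whose bullet ``$\Proj_K(v)=0 \iff v\in K^\circ$'' is the needed fact), while you prove that implication inline via the same cone-scaling idea. One cosmetic nit: in your existence argument the ball radius $\|w\|$ should be $2\|w\|$ if justified by the triangle inequality alone (the sharper bound $\|\Proj_K(w)\|\le\|w\|$ is true for cones but itself needs the scaling trick), though this does not affect anything downstream.
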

\begin{proof}
The equivalence of (a) and (b) follows from the definitions of stationarity and of the polar. The equivalence of (b) and (c) follows from Proposition~\ref{prop:normprojtocone}.
\end{proof}

Given the equivalent characterization of stationarity in Proposition~\ref{prop:equiv_defns_of_1crit}(c), it is natural to consider the following relaxed notion of stationarity. 
\begin{definition} \label{def:criticalPapprox}
    A point $x \in \calX$ such that $\|\Proj_{\T_x\calX}(-\nabla f(x))\| \leq \epsilon$ for some $\epsilon \geq 0$ is called \emph{$\epsilon$-stationary} for~\eqref{eq:P}.
\end{definition}
We can get an alternative notion of approximate stationarity by relaxing Proposition 2.5(b) instead, but this turns out to be less natural, see Theorem~\ref{thm:invarianceprojsummary}. 
\begin{journal}
(We can get an alternative notion of approximate stationarity by relaxing Proposition 2.5(b) instead, but this turns out to be less natural~\cite[Thm.~A.9]{levin2021finding}.)
\end{journal}
\begin{tbd}
We can get an alternative notion of approximate stationarity by relaxing Proposition 2.5(b) instead, but this turns out to be less natural, see Theorem~\ref{thm:invarianceprojsummary}. 
\end{tbd}

As we show in Appendix~\ref{apdx:cones}, the (non-empty) set $\Proj_{\T_x\calX}(-\nabla f(x))$ depends on $f$ only through $f|_\calX$, and all of its elements have the same norm.
This norm measures the maximal linear rate of decrease of $f$ one can obtain by moving away from $x$ along a direction in the tangent cone at $x$.
Some gradient-based algorithms use this measure of approximate stationarity. 
For example, the {\DPGD} method~\eqref{eq:PPGD} proposed in~\cite{schneider2015convergence} computes a projection $\Proj_{\T_x\calX}(-\nabla f(x))$ at iterate $x$ and terminates if its norm is sufficiently small.
Otherwise, the projection is used to generate the next iterate by moving along a curve on $\calX$ with initial velocity matching the projection. 
However, as we proceed to show below, this notion of approximate stationarity should be treated with circumspection.

Feasible optimization algorithms for~\eqref{eq:P} generate sequences of points $(x_k)_{k\geq 1}$ on $\calX$.
Possibly passing to a subsequence, assume $(x_k)$ has a limit $x \in \calX$.
When $\calX$ is a smooth manifold, it is clear that $x$ is stationary if and only if the points $x_k$ are $\epsilon_k$-stationary with $\epsilon_k \to 0$.
However, when $\calX$ is nonsmooth both the ``if'' and ``only if'' parts can fail.
The failure of the ``if'' part is most significant for optimization purposes (see Definition~\ref{def:serendipities} and surrounding comments below for the ``only if'' part).
It implies that there may exist a sequence along which the stationarity measure in Definition~\ref{def:criticalPapprox} goes to zero, yet whose limit is not stationary.
We call them \emph{apocalypses}:
\begin{definition}\label{def:apocalypses}
    A point $x\in\calX$ is called \emph{apocalyptic} if there exists a sequence $(x_k)_{k\geq 1}\subseteq\cal X$ converging to $x$ and a continuously differentiable function $f$ such that the $x_k$ are $\epsilon_k$-stationary for~\eqref{eq:P} with some sequence of $\epsilon_k > 0$ converging to 0, but $x$ is not stationary for~\eqref{eq:P}. 
 	Such a triplet $(x, (x_k)_{k\geq 1}, f)$ is called an \emph{apocalypse}.
\end{definition}
Note that if $x^*$ is apocalyptic then the measure of approximate stationarity $x\mapsto \|\Proj_{\T_x\calX}(-\nabla f(x))\|$ is not lower semicontinuous at $x^*$ since there exists $x_k\to x^*$ such that $\lim_k\|\Proj_{\T_{x_k}\calX}(-\nabla f(x_k))\|=0 < \|\Proj_{\T_{x^*}\calX}(-\nabla f(x^*))\|$. 

The existence of apocalypses is a clear hazard for optimization algorithms operating on~\eqref{eq:P}.
Indeed, gradient-based algorithms would happily follow a sequence of points that become increasingly close to being stationary.
Such an algorithm would necessarily terminate after a finite amount of time, thus, before reaching the limit.
The last computed iterate, even though it is approximately stationary, may not be close to any stationary point.


We emphasize that existence of apocalyptic points is a purely geometric phenomenon that only depends on the constraint set $\calX$, not on any particular cost function or algorithm.
On the other hand, a particular cost function may not have any apocalypses at all, and a particular algorithm may not follow any apocalypse.
This will become clear in our characterization of apocalypses in Theorem~\ref{thm:apocalypse_char}. 

The complementary notion to apocalypses is a sequence of points that are not approximately stationary yet nevertheless converge to a stationary point.
We call this phenomenon a \emph{serendipity}:
\begin{definition}\label{def:serendipities}
    A point $x \in \calX$ is called \emph{serendipitous} if there exist a sequence $(x_k)_{k\geq 1}\subseteq\calX$ converging to $x$, a continuously differentiable function $f$, and $\epsilon > 0$ such that none of the $x_k$ are $\epsilon$-stationary for~\eqref{eq:P}, yet $x$ is stationary for~\eqref{eq:P}. 
    Such a triplet $(x, (x_k)_{k\geq 1}, f)$ is called a \emph{serendipity}.
\end{definition}
Note that if $x^*$ is serendipitous, then the measure of approximate stationarity $x\mapsto \|\Proj_{\T_x\calX}(-\nabla f(x))\|$ is not upper semicontinuous. 

Of course, serendipities do not exist if
the problem is unconstrained or, more generally, if $\calX$ is a smooth manifold. 
However, it is easy to construct serendipities for other constrained problems.
For example, with $\calX = [0, 1]$ as a subset of $\calE = \reals$ the point $x = 0$ is serendipitous.
(To verify this, let $x_k = 1/k$ and $f(x) = x$.)
More generally, we expect serendipities at the boundary of a search space $\calX$.
The existence of serendipities illustrates that a point that is not approximately stationary can still be arbitrarily close to a stationary point. 
Like apocalypses, we later characterize serendipities purely geometrically in terms of the tangent cones to $\calX$.

\begin{remark}
    Apocalypses and serendipities can only occur at points of $\calX$ where the set is not (locally) a smooth manifold.
    However, it is not necessarily the case that such points are apocalyptic or serendipitous.
    For example, the double parabola in $\reals^2$ defined by $y^2=x^4$ is not smooth near the origin, yet the origin is neither apocalyptic nor serendipitous.
\end{remark}

In the next two sections we focus on the case of bounded-rank optimization, then we circle back to the general case in Section~\ref{sec:apocs_character}.

\subsection{Tangent cones to the bounded-rank variety}


In this section, we recall what the tangent cones to the bounded-rank variety $\Rmnlr$ are, and how to project onto them.
Suppose $X\in\Rmnlr$ has SVD $X=U\Sigma V^\top$ where $U\in \Rmm,\ V\in \Rnn$ are orthogonal and $\Sigma=\diag(\sigma_1,\ldots,\sigma_s,0,\ldots,0)\in\RR^{m\times n}$ is diagonal but possibly non-square.
Here $s=\rank(X)\leq r$.
As stated in~\cite[Thm.~3.2]{schneider2015convergence} we have
\begin{align}
    \T_X\Rmnlr & = \left\{U\begin{bmatrix} M_1 & M_2 \\ M_3 & M_4\end{bmatrix}V^\top: M_1\in\RR^{s\times s},\ M_4\in\RR^{(m-s)\times(n-s)}_{\leq r-s}\right\} \label{eq:TXRmnlr} \\
               & = \{M\in\RR^{m\times n}: \rank(\Proj_{\tt{col}(X)^{\perp}}M\Proj_{\tt{row}(X)^{\perp}}) \leq r-s\} \nonumber,
\end{align}
where $\tt{col}(X)$ and $\tt{row}(X)$ denote the column and row spaces of $X$. 
The projection of $M\in\Rmn$ onto $\T_X\Rmnlr$ is given by~\cite[Alg.~2]{schneider2015convergence}
\begin{equation}\label{eq:proj_Tx}
    \Proj_{\T_X\Rmnlr}(M) = U\begin{bmatrix} M_1 & M_2 \\ M_3 & \mathrm{P}_{\leq r-s}(M_4)\end{bmatrix}V^\top,\quad \textrm{if } M = U\begin{bmatrix} M_1 & M_2 \\ M_3 & M_4\end{bmatrix}V^\top,
\end{equation} 
where we abbreviated $\mathrm{P}_{\leq r-s}(M_4)\coloneqq \Proj_{\RR^{(m-s)\times(n-s)}_{\leq r-s}}(M_4)$, which is obtained by zeroing out all but the top $r-s$ singular values of $M_4$.

The polar of the tangent cone is also available in explicit form.
There are two cases to consider.
Either $\rank(X) = r$, in which case $X$ is a smooth point of the variety: the tangent cone at $X$ is a linear space described as in~\eqref{eq:TXRmnlr} with $M_4 = 0$; its polar is its orthogonal complement.
Otherwise $\rank(X) < r$, in which case $\T_X\Rmnlr$ contains all rank-1 matrices. Since their convex hull is all of $\Rmn$, we have $(\T_X\Rmnlr)^{\circ} = \{0\}$.
To summarize:
\begin{align*}
    (\T_X\Rmnlr)^{\circ} & =
    \begin{cases}
        \{M \in \Rmn : \Proj_{\tt{col}(X)}M = M\Proj_{\tt{row}(X)} = 0\}
        & \textrm{if } \rank(X) = r, \\
        \{ 0 \} & \textrm{if } \rank(X) < r.
    \end{cases}
\end{align*}
Thus, $X$ with $\rank(X) < r$ is stationary for $f$ on $\Rmnlr$ if and only if $\nabla f(X) = 0$. 

\subsection{Apocalypses on $\Rmnlr$ and implications for {\DPGD}} \label{subsec:apocalypse_on_mats}

The subset of $\Rmnlr$ consisting of rank-$r$ matrices is the smooth locus of $\Rmnlr$: none of those points are apocalyptic.
In contrast, all the other matrices are singular points of the variety $\Rmnlr$.
As it turns out, they are all apocalyptic:
\begin{proposition}\label{prop:apocs_on_low_rk_var}
    On the set $\Rmnlr$ with $r < \min(m, n)$, any matrix of rank strictly less than $r$ is apocalyptic.
\end{proposition}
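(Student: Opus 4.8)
The plan is to exhibit, for an arbitrary matrix $X_*\in\Rmnlr$ with $s:=\rank(X_*)<r$, an explicit apocalypse $(X_*,(X_k)_{k\ge1},f)$ in the sense of Definition~\ref{def:apocalypses}. Since stationarity and the approximate-stationarity measure are invariant under the orthogonal change of variables $X\mapsto UXV^\top$, I would first reduce to a normal form: after left/right multiplication by orthogonal matrices we may assume $X_*=\diag(\sigma_1,\dots,\sigma_s,0,\dots,0)$, and in fact, by a further translation argument on $f$, it suffices to treat the case $X_*=0$ (the generic rank-$s$ block only shifts where the action happens; the relevant nonsmoothness is captured by a single off-diagonal $2\times2$ window sitting in the bottom-right $(m-s)\times(n-s)$ corner, where the tangent cone is $\RR^{(m-s)\times(n-s)}_{\le r-s}$ and $r-s\ge1$).

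Next I would build the bad sequence and cost function inside a $2\times2$ coordinate window (embedding a $2\times2$ example into $\Rmn$ by padding with zeros; this is legitimate because $r-s\ge 1<\min(m-s,n-s)$ whenever $r<\min(m,n)$). Concretely, take $X_k$ to be a rank-one matrix in this window whose two singular directions are ``tilting'': e.g. $X_k = t_k\, a_k b_k^\top$ with $t_k\to 0$ and with $a_k,b_k$ unit vectors rotating so that $X_k\to 0$ but the column/row spaces of $X_k$ do not converge to the column/row spaces one would need. Then I would design a smooth $f$ (a quadratic or mild polynomial, as promised in the paper's discussion) so that $-\nabla f(X_k)$ is nearly orthogonal to $\T_{X_k}\Rmnlr$ — using the description~\eqref{eq:proj_Tx}, this means $-\nabla f(X_k)$ should have small components in the ``$M_1,M_2,M_3$'' blocks adapted to $X_k$ and its bottom-right block should be nearly rank $\le r-s$ — yet $\nabla f(0)\neq 0$. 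By the explicit polar computation in the paragraph preceding the proposition, $X_*=0$ (rank $<r$) is stationary iff $\nabla f(X_*)=0$; so $\nabla f(X_*)\neq0$ immediately gives non-stationarity, and $\|\Proj_{\T_{X_k}\Rmnlr}(-\nabla f(X_k))\|\to 0$ gives the $\epsilon_k\to0$ stationarity along the sequence.

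The cleanest way to get the projection norms to vanish is to align $\nabla f(X_k)$ with a direction that is ``almost polar'' at $X_k$ but becomes a genuine descent direction at the limit. A workable choice: let $f$ be linear in the window, $f(Y)=\langle G, Y\rangle$ with a fixed $G\neq 0$ that is rank-one and orthogonal to the tangent window contributions generated by the $a_k,b_k$ at positive scales, but not orthogonal to the full tangent cone at $0$ (which contains all rank-$\le r-s$ matrices, hence $G$ itself). Then $\nabla f\equiv G$, $\nabla f(X_*)=G\neq 0$ so $X_*$ is not stationary, while $\Proj_{\T_{X_k}\Rmnlr}(-G)$ must be shown to have norm $\to0$ as the tilt of $X_k$ makes $-G$ approach the polar $(\T_{X_k}\Rmnlr)^\circ$; this last limit is the one computation I would actually carry out, using~\eqref{eq:proj_Tx} and the fact that $\|\Proj_{\T_X\Rmnlr}(M)\|^2=\|M-B\|^2+\|\mathrm{P}_{\le r-s}(B)\|^2$ with $B=\Proj_{\mathrm{col}(X)^\perp}M\Proj_{\mathrm{row}(X)^\perp}$.

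The main obstacle is arranging the geometry of the tilting directions $a_k,b_k$ precisely enough that $-G$ genuinely converges into the (shrinking, rotating) polar cones $(\T_{X_k}\Rmnlr)^\circ$ — equivalently, that the component of $G$ visible to $\T_{X_k}\Rmnlr$ decays — while keeping $X_k\to X_*$; morally this is exactly the failure of lower semicontinuity of $X\mapsto\|\Proj_{\T_X\Rmnlr}(-\nabla f(X))\|$ flagged in Section~\ref{sec:downstairs}, so I expect the construction to mirror (and generalize from a single $2\times2$ block to arbitrary $m,n,r,s$) the concrete apocalypse that the paper builds for {\DPGD} in Section~\ref{subsec:apocalypse_on_mats}; indeed the proposition should follow by localizing that example.
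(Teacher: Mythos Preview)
Your general strategy (linear cost $f(X)=\langle G,X\rangle$, explicit sequence $X_k\to X_*$) matches the paper's, but your execution is more complicated than needed and contains a flawed step. The ``translation argument on $f$'' reducing to $X_*=0$ does not work: the map $X\mapsto X-X_*$ does not preserve $\Rmnlr$, so you cannot simply shift the base point. More importantly, the ``tilting'' mechanism is a red herring. You want $-G$ to approach the polar $(\T_{X_k}\Rmnlr)^\circ$; but at a rank-$r$ point $X_k$ that polar consists of matrices whose column and row spaces are orthogonal to those of $X_k$. If $G$ is fixed and the singular directions of $X_k$ rotate, you are asking a fixed $G$ to align with a moving one-dimensional target---which forces those directions to converge anyway, so the rotation buys nothing and only obscures the construction.

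The paper's proof avoids all of this with one clean observation: keep the singular \emph{directions} of $X_k$ fixed and let only the singular \emph{values} decay. Concretely, take a size-$r$ SVD $\bar X=U\Sigma V^\top$ (so $\Sigma$ has $r-s$ trailing zeros), set $X_k=U\Sigma_kV^\top$ with $\Sigma_k=\diag(\sigma_1,\dots,\sigma_s,1/k,\dots,1/k)$, and---using $r<\min(m,n)$---pick unit vectors $u\perp\mathrm{col}(U)$, $v\perp\mathrm{col}(V)$. With $f(X)=u^\top Xv$ we have $\nabla f\equiv uv^\top$; since $u\perp\mathrm{col}(X_k)$ and $v\perp\mathrm{row}(X_k)$ for every $k$, the matrix $-uv^\top$ lies \emph{exactly} in $(\T_{X_k}\Rmnlr)^\circ$, so each $X_k$ is $0$-stationary (not merely $\epsilon_k$-stationary). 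Meanwhile $\rank(\bar X)<r$ and $\nabla f(\bar X)=uv^\top\neq 0$, so $\bar X$ is not stationary. No reduction to $X_*=0$, no $2\times2$ embedding, and no tilting are needed; the construction you allude to in Section~\ref{subsec:apocalypse_on_mats} is a more elaborate example built so that an \emph{algorithm} follows it, not the minimal witness for this proposition.
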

\begin{proof}
        Let $\bar X = U\Sigma V^\top$ be an SVD of $\bar X \in \Rmnlr$ of size $r$, that is, $U \in \Rmr$, $V \in \Rnr$ and $\Sigma = \diag(\sigma_1, \ldots, \sigma_r)$ with $U^\top U = I_r$, $V^\top V = I_r$ and $\sigma_1 \geq \cdots \geq \sigma_r$.
        Assuming $\rank(\bar X) = s < r$, we have $\sigma_r = 0$.
        Since $r < \min(m, n)$, we can select $u \in \Rm$ and $v \in \Rn$ each of unit norm such that $U^\top u = 0$ and $V^\top v = 0$.
        Consider the function $f(X) = u^\top X v$, whose gradient in $\Rmn$ is $\nabla f(X) = uv^\top$.
        Notice that $\bar X$ is not stationary for the problem of minimizing $f$ over $\Rmnlr$ because $\rank(\bar X) < r$ and $\nabla f(\bar X) \neq 0$.
        Now consider the sequence $(X_k)_{k\geq 1}$ on $\Rmnlr$ defined by $X_k = U\Sigma_k V^\top$, where $\Sigma_k = \diag(\sigma_1, \ldots, \sigma_{s}, 1/k, \ldots, 1/k)$.
        It is clear that $X_k \in \Rmnlr$ for all $k$ and that $\lim_{k\to\infty} X_k = \bar X$.
        Moreover, $-\nabla f(X_k) = -uv^\top\in (\T_{X_k}\Rmnlr)^{\circ}$ for all $k$ since
        \begin{align*}
            \Proj_{\tt{col}(X_k)}(-\nabla f(X_k)) & = UU^\top(-uv^\top) = 0, \textrm{ and } \\
            (-\nabla f(X_k))\Proj_{\tt{row}(X_k)} & = (-uv^\top) VV^\top = 0.
        \end{align*}
        Therefore,
        we conclude that the triplet $(\bar X, (X_k)_{k \geq 1}, f)$ is an apocalypse.
\end{proof}

Not only do apocalypses exist on $\Rmnlr$, but also the {\DPGD} algorithm of Schneider and Uschmajew~\cite{schneider2015convergence} can be made to follow an apocalypse. We construct an explicit example of this below.
Thus, in this adversarially designed situation, {\DPGD} converges to a non-stationary point.
The {\DPGD} algorithm applied to the set
of bounded-rank matrices iterates~\eqref{eq:PPGD} 
where the step size
$\alpha_k$ is chosen via backtracking line-search to satisfy the sufficient decrease condition
\begin{align} \label{eq:sufficient_decrease_condition}
	f(X_k)-f(X_{k+1}) & \geq \tau\alpha_k\Big\|\Proj_{\T_{X_k}\Rmnlr}(-\nabla f(X_k))\Big\|^2,
\end{align}
for some $\tau\in(0,1)$ chosen by the user.

We construct a function $f\colon \Rttlt \to \reals$ and a point $X_0$ such that the sequence $(X_k)_{k\geq 0}$ generated by~\eqref{eq:PPGD} starting from $X_0$ converges to a limit $X$ and the triplet $(X, (X_k)_{k\geq 0}, f)$ forms an apocalypse.
In other words, we have $\|\Proj_{\T_{X_k}\Rttlt}(-\nabla f(X_k))\| \to 0$ so we appear to converge to a stationary point, yet $\|\Proj_{\T_X\Rttlt}(-\nabla f(X))\| \neq 0$ so the limit is not stationary.

To this end, we first construct a convex function $Q \colon \reals^{2\times 2}\to \reals$ that would take infinitely many steps to minimize using gradient descent.
Let
\begin{align*}
	Q(Y) & = \frac{1}{2}\| D(Y-Y^*)\|^2, & \textrm{ with } & & D & = \diag(1, 1/2), & Y^* & = \diag(1, 0).
\end{align*}
With a constant step size $\alpha$, gradient descent on $Q$ iterates
\begin{align*}
	Y_{k+1} & = Y_k - \alpha\nabla Q(Y_k) = (I-\alpha D^2)Y_k + \alpha D^2Y^*.
\end{align*}
Applying this formula inductively we get an explicit expression:
\begin{align*}
	Y_k-Y^* & = (I-\alpha D^2)^k(Y_0 - Y^*).
\end{align*}
Initializing from $Y_0 = \diag(2, 1)$, this further simplifies to:
\begin{align*}
	Y_k & = Y^* + \diag((1-\alpha)^k, (1-\alpha/4)^k).
\end{align*}
With appropriate $\alpha$, this sequence converges linearly to the unique global minimizer $Y^*$.
The rate of convergence is optimal with $\alpha = 8/5$, in which case the discrepancy decays as $(3/5)^k$.
Note that convergence still requires infinitely many steps.
Note also that $\rank(Y_k) = 2$ for all $k\geq 0$, while $\rank(Y^*) = 1$.

We now construct our bad example.
Let $f\colon \Rttlt \to \reals$ be given by
\begin{align*}
	f(X) & = Q(X_{1:2,1:2}) - \frac{(X_{3,3}+1)^2}{2} + \frac{X_{3,3}^4}{4},
\end{align*}
where $X_{1:2,1:2}$ is the upper-left $2\times 2$ submatrix of $X$ and $X_{3,3}$ is its bottom-right entry.
Notice that $f$ is a sum of functions of only the upper-left and only the lower-right diagonal blocks of its input, respectively, and is independent of the remaining entries.
In particular, this makes it possible to claim that the global minimum for $f$ is attained at $X^* = \mathrm{blkdiag}(Y^*, x_0) = \diag(1, 0, x_0) \in \Rttlt$, where $\mathrm{blkdiag}$ constructs a block-diagonal matrix and $x_0 \approx 1.32$ is the unique global minimizer of the univariate quartic $x \mapsto -\frac{(x+1)^2}{2}+\frac{x^4}{4}$.
This confirms that $f$ is bounded below.

The tangent cone to $\Rttlt$ at any matrix of the form $X = \mathrm{blkdiag}(Y,0)$ with $Y \in \reals^{2\times 2}$ and $\rank(Y) = 2$ is the linear space given by~\eqref{eq:TXRmnlr}, namely,
\begin{align*}
	\T_X\Rttlt & = \left\{\begin{pmatrix} A & b\\ c^\top & 0\end{pmatrix} : A\in\reals^{2\times 2},\ b,c\in \reals^2 \right\}.
\end{align*}
The orthogonal projector $\Proj_{\T_X\Rttlt}$ onto this subspace simply zeros out the $(3, 3)$ entry (bottom right) by~\eqref{eq:proj_Tx}.
Therefore, for any such point $X$ we have
\begin{align*}
	\nabla f(X) & = \begin{pmatrix} \nabla Q(Y) & 0\\ 0 & -1 \end{pmatrix}, & \Proj_{\T_X\Rttlt}(-\nabla f(X)) & = \begin{pmatrix} -\nabla Q(Y) & 0\\ 0 & 0 \end{pmatrix}.
\end{align*}
We initialize {\DPGD} at $X_0 = \mathrm{blkdiag}(Y_0, 0) = \diag(2, 1, 0)$ and use the constant step size $\alpha = 8/5$ (see below regarding line-search).
Since $X_0$ is of the form above, the next iterate is 
\begin{align*}
	X_1 & = \Proj_{\Rttlt}\!\left( X_0+\alpha\Proj_{\T_{X_0}\Rttlt}(-\nabla f(X_0)) \right) \\
        & = \mathrm{blkdiag}(Y_0-\alpha \nabla Q(Y_0), 0) = \mathrm{blkdiag}(Y_1, 0).
\end{align*}
Suppose that the $k$th iterate has the form $X_k = \mathrm{blkdiag}(Y_k, 0)$.
Using $\rank(Y_k) = 2$, the same argument applies inductively and we have $X_{k+1} = \mathrm{blkdiag}(Y_{k+1}, 0)$.
Thus, we conclude that $X_0, X_1, X_2, \ldots$ converges to $\bar{X} \triangleq \mathrm{blkdiag}(Y^*, 0)$ in infinitely many steps.
Moreover,
\begin{align*}
    \|\Proj_{\T_{X_k}\Rttlt}(-\nabla f(X_k))\| = \|\nabla Q(Y_k)\| = \frac{\sqrt{17}}{4} (3/5)^k \to 0.
\end{align*}
In words: the stationarity measure converges to zero along $X_0, X_1, X_2, \ldots$

However, $\bar{X}$ is not stationary for $f$ on $\Rttlt$.
Indeed, since $\rank(Y^{*}) = 1 < 2$, the tangent cone to $\Rttlt$ at $\bar{X}$ is given by
\begin{align*}
	\T_{\bar{X}}\Rttlt & = \left\{\begin{pmatrix} a & b^\top \\ c & E \end{pmatrix}: a \in \reals, \ b, c \in \reals^2 \textrm{ and } \rank(E) \leq 1\right\}.
\end{align*}
Since $-\nabla f(\bar{X}) = \diag(0, 0, 1) \in \T_{\bar{X}}\Rttlt$, 
we have
\begin{align*}
	\Proj_{\T_{\bar{X}}\Rttlt}(-\nabla f(\bar{X})) = -\nabla f(\bar{X}) \neq 0.
\end{align*}
Thus, the triplet $(\bar{X}, (X_k)_{k\geq 0}, f)$ is an apocalypse on $\Rttlt$, and moreover {\DPGD} follows that apocalypse.

To close the construction of our example, we address the question of step size selection.
Notice that our step size $\alpha$ satisfies the sufficient decrease condition~\eqref{eq:sufficient_decrease_condition} with $\tau = 1/5$.
Indeed, with $M = I-(I-\alpha D)^2$ we have:
\begin{align*}
    f(X_k) - f(X_{k+1}) & = Q(Y_k) - Q(Y_{k+1}) = \frac{1}{2}\inner{MD(Y_k-Y^*)}{D(Y_k-Y^*)} \\
    & \geq \frac{\lambda_{\min}(M)}{2}\| D(Y_k-Y^*)\|^2 = \tau\alpha\|\nabla Q(Y)\|^2 \\
    & = \tau\alpha\|\Proj_{\T_{X_k}\Rttlt}(-\nabla f(X_k))\|^2.
\end{align*}
Thus, if we initialize the Armijo backtracking line-search of {\DPGD} with the proposed value of $\alpha$ and use the above value of $\tau$, that initial step size is accepted every time.
Consequently, {\DPGD} with such line-search indeed follows the proposed apocalypse.


\begin{remark}
    Contemplating the particular apocalypse constructed above, it is easy to devise a ``fix''.
    For example, 
    we can project $X_k$ to the lesser rank matrices when its smallest nonzero singular value drops below a threshold.
    In this particular instance, that would bring us to $\bar{X}$ in finitely many steps.
    From there, the negative gradient points in a good direction and we can proceed.
    Such strategies based on singular value truncation have been considered by a number of authors, e.g.~\cite{olikier2022apocalypse,ZHOU201672,gao2021riemannian} and~\cite[\S3.3]{eitan_thesis}. 
    Of these, only the (arguably) elaborate algorithm of~\cite{olikier2022apocalypse} described in Section~\ref{sec:intro} is guaranteed to converge to a stationary point. 
    We believe apocalypses are the common underlying geometric hurdle that complicates these algorithms and their analyses.
\end{remark}

\subsection{Characterization of apocalypses}\label{sec:apocs_character}
Having constructed an apocalypse on $\Rmnlr$, we turn to characterizing apocalypses on a general subset $\calX$ of a Euclidean space $\calE$.
We seek a description in geometric terms involving limits of tangent cones. 
\begin{definition}[{\protect\cite[Def.~4.1, Ex.~4.2]{rockafellar2009variational}}] \label{def:lims_of_sets}
    Let $(C_k)_{k\geq 1}$ be a sequence of subsets of a Euclidean space $\calE$.
    The outer and inner limits of the $C_k$ are defined as follows:
    \begin{enumerate}
        \item Outer limit:
        \begin{equation} \label{eq:limsup_of_sets}
            \limsup_{k\to\infty} C_k = \left\{ x \in \calE : \liminf_{k\to\infty} \dist(x, C_k) = 0 \right\}.
        \end{equation}
        \item Inner limit:
        \begin{equation} \label{eq:liminf_of_sets}
            \liminf_{k\to\infty} C_k = \left\{ x \in \calE : \limsup_{k\to\infty} \dist(x, C_k) = 0 \right\}.
        \end{equation}
    \end{enumerate}
    Above, $\dist(x, C_k) = \inf_{y \in C_k} \|y - x\|$ denotes distance from $x$ to the set $C_k$.
\end{definition}
The above notions allow for a clear characterization of apocalypses as follows.
We denote by $\overline{\mathrm{conv}}(A)$ the closure of the convex hull of $A$.
\begin{theorem} \label{thm:apocalypse_char}
    A triplet $(x,(x_k)_{k\geq 1},f)$ is an apocalypse on $\calX$ if and only if $-\nabla f(x)\in \left(\limsup_k\T_{x_k}\calX\right)^{\circ}\setminus (\T_x\calX)^{\circ}$. 
    A point $x\in\calX$ is apocalyptic if and only if there exists a sequence $(x_k)_{k\geq 1}\subseteq\mc X$ converging to $x$ such that $\left(\limsup_k\T_{x_k}\calX\right)^{\circ}\not\subseteq (\T_x\calX)^{\circ}$, or equivalently, $\T_x\calX\not\subseteq\overline{\mathrm{conv}}\left(\limsup_k\T_{x_k}\calX\right)$.
\end{theorem}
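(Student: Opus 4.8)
The plan is to prove the first equivalence (the statement about a triplet) and then read off the pointwise statement by quantifying over cost functions. Abbreviate $K_k = \T_{x_k}\calX$ and $K_\infty = \limsup_k K_k$; note $K_\infty$ is a closed cone because outer limits are always closed and limits of elements of cones stay in the cone. Three facts will be used repeatedly, all from Appendix~\ref{apdx:cones}: for a closed cone $K$ a nearest point $\Proj_K(v)$ exists, it satisfies $\innersmall{v}{\Proj_K(v)} = \|\Proj_K(v)\|^2$, and $\dist(v,K)^2 = \|v\|^2 - \|\Proj_K(v)\|^2$. I will also use the sequential description of the outer limit (standard; cf.\ Definition~\ref{def:lims_of_sets}): $w \in K_\infty$ iff there are $k_j \to \infty$ and $w_j \in K_{k_j}$ with $w_j \to w$, valid since the $K_k$ are closed. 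For the ``only if'' direction, let $(x,(x_k),f)$ be an apocalypse, so $\eta_k \coloneqq \|\Proj_{K_k}(-\nabla f(x_k))\| \to 0$ while $-\nabla f(x)\notin (\T_x\calX)^\circ$; the latter is half the claim. Given $w \in K_\infty$ with witnesses $w_j \in K_{k_j}$, $w_j\to w$, the inclusion $t w_j \in K_{k_j}$ for $t>0$ yields $\|{-}\nabla f(x_{k_j}) - t w_j\|^2 \ge \dist(-\nabla f(x_{k_j}),K_{k_j})^2 = \|\nabla f(x_{k_j})\|^2 - \eta_{k_j}^2$, which rearranges to $2t\,\innersmall{-\nabla f(x_{k_j})}{w_j} \le t^2\|w_j\|^2 + \eta_{k_j}^2$. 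Sending $j\to\infty$ (continuity of $\nabla f$, and $x_{k_j}\to x$, $w_j\to w$, $\eta_{k_j}\to 0$), then $t\to 0^+$, gives $\innersmall{-\nabla f(x)}{w}\le 0$; hence $-\nabla f(x)\in K_\infty^\circ$.

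For the ``if'' direction, assume $-\nabla f(x)\in K_\infty^\circ\setminus(\T_x\calX)^\circ$, so $x$ is not stationary, and suppose toward a contradiction that there is no sequence $\epsilon_k>0$ with $\epsilon_k\to 0$ along which the $x_k$ are $\epsilon_k$-stationary; equivalently $\|\Proj_{K_k}(-\nabla f(x_k))\|\not\to 0$, so along a subsequence $\|\Proj_{K_k}(-\nabla f(x_k))\|\ge\delta$ for some $\delta>0$. Let $P_k$ be a nearest point in $K_k$ to $-\nabla f(x_k)$, so $P_k\in K_k$, $\delta\le\|P_k\|\le\|\nabla f(x_k)\|$ (bounded since $x_k\to x$), and $\innersmall{-\nabla f(x_k)}{P_k}=\|P_k\|^2$. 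The unit vectors $P_k/\|P_k\|$ lie in the cone $K_k$; extracting a further subsequence, $P_k/\|P_k\|\to w$ with $\|w\|=1$, so $w\in K_\infty$, while $\innersmall{-\nabla f(x_k)}{P_k/\|P_k\|}=\|P_k\|\ge\delta$ passes to the limit to give $\innersmall{-\nabla f(x)}{w}\ge\delta>0$, contradicting $-\nabla f(x)\in K_\infty^\circ$. Hence $\|\Proj_{K_k}(-\nabla f(x_k))\|\to 0$, and with $\epsilon_k \coloneqq \|\Proj_{K_k}(-\nabla f(x_k))\| + 1/k > 0$ the triplet is an apocalypse.

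For the pointwise statement: a $C^1$ function can realize any prescribed value of $\nabla f(x)$ (take $f$ linear), so for a fixed sequence ``there exists a $C^1$ function $f$ with $-\nabla f(x)\in K_\infty^\circ\setminus(\T_x\calX)^\circ$'' holds iff $K_\infty^\circ\not\subseteq(\T_x\calX)^\circ$. Combined with the first equivalence, $x$ is apocalyptic iff some sequence $x_k\to x$ in $\calX$ satisfies $K_\infty^\circ\not\subseteq(\T_x\calX)^\circ$. Finally, for closed cones $A,B$, polarity being inclusion-reversing together with the bipolar identity $A^{\circ\circ}=\overline{\conv}(A)$ (Lemma~\ref{lem:coneproperties}) gives $A^\circ\subseteq B^\circ \iff \overline{\conv}(B)\subseteq\overline{\conv}(A) \iff B\subseteq\overline{\conv}(A)$; applying this with $A=K_\infty$ and $B=\T_x\calX$ rewrites $K_\infty^\circ\not\subseteq(\T_x\calX)^\circ$ as $\T_x\calX\not\subseteq\overline{\conv}(K_\infty)$, which is the stated reformulation.

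I expect the main obstacle to be the non-convexity of the tangent cones $K_k$: one must not identify $\|\Proj_{K_k}(v)\|$ with $\dist(v,K_k^\circ)=\|\Proj_{\overline{\conv}(K_k)}(v)\|$, since these differ when $K_k$ is not convex. Only the one-sided inequality $\innersmall{v}{w}\le\|\Proj_K(v)\|\,\|w\|$ for $w\in K$ is available ``for free'' (it is exactly what the optimization over $t$ in the ``only if'' step amounts to), and the ``if'' step instead uses the nearest-point identity $\innersmall{v}{\Proj_K(v)}=\|\Proj_K(v)\|^2$ after normalizing. Getting the order of the $j\to\infty$ and $t\to 0$ limits right, and extracting the correct subsequences, is the delicate bookkeeping; everything else (closedness of outer limits, the bipolar theorem, density of gradient values of $C^1$ functions) is routine.
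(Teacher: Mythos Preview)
Your proof is correct and follows essentially the same approach as the paper. The only organizational difference is that the paper isolates the key step as a separate lemma characterizing $\left(\limsup_k \T_{x_k}\calX\right)^\circ = \{v : \lim_k\|\Proj_{\T_{x_k}\calX}(v)\|=0\}$ and then transfers from $-\nabla f(x_k)$ to $-\nabla f(x)$ via the $1$-Lipschitzness of $v\mapsto\|\Proj_K(v)\|$, whereas you inline both directions of that lemma and pass to the limit using continuity of $\nabla f$ directly; the underlying computations (normalizing a projection and using $\innersmall{v}{\Proj_K(v)}=\|\Proj_K(v)\|^2$) are the same.
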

See Appendix~\ref{apdx:pf_of_apocs_char} for the proof.
Theorem~\ref{thm:apocalypse_char} shows that apocalypses arise when tangent cones along a convergent sequence `miss' directions that only become visible in the limit.
It is instructive to revisit the proof of Proposition~\ref{prop:apocs_on_low_rk_var} from this perspective. 

How pervasive are apocalypses?
We proceed to show that they do not occur on Clarke regular sets.
Recall that $\calX \subseteq \calE$ is \emph{locally closed} in $\calE$ at $x$ if there exists a closed neighborhood $V$ of $x$ such that $\calX \cap V$ is closed in $\calE$~\cite[p28]{rockafellar2009variational}.
\begin{definition}[{\protect\cite[Cor.~6.29(g)]{rockafellar2009variational}}]
    A locally closed set $\calX$ is \emph{Clarke regular} at $x\in\calX$ if $\T_x\calX\subseteq \liminf_k\T_{x_k}\calX$ for all sequences $(x_k)_{k\geq 1}$ in $\calX$ converging to $x$.
\end{definition}
Clarke regularity implies several other notions of regularity used in the literature~\cite{hesse_noncvx_regularity}.
In particular, convex sets and smooth embedded submanifolds are Clarke regular~\cite[Ex.~6.8, Thm.~6.9]{rockafellar2009variational}.
Sets of the form $\calX = \{ x : h(x) \in Y \}$ satisfying a certain constraint qualification are Clarke regular at $x$ if $Y$ is Clarke regular at $h(x)$~\cite[Thm.~6.31]{rockafellar2009variational}.
We also show below that manifolds with boundaries are Clarke regular.
\begin{corollary}\label{cor:apocalypse_suff_cond}
    We have the following implications:
    \begin{align*}
        &\text{$\calX$ is Clarke regular at $x\in\calX$}\\[0.1in]
        &\implies \text{$\T_x\calX\subseteq\limsup_k\T_{x_k}\calX$ for all sequences $(x_k)_{k\geq 1}$ converging to $x$} \\
        &\implies \text{$\T_x\calX\subseteq\overline{\mathrm{conv}}\left(\limsup_k\T_{x_k}\calX\right)$ for all sequences $(x_k)_{k\geq 1}$ converging to $x$}\\
        &\iff \text{$x$ is not apocalyptic}.
    \end{align*}
\end{corollary}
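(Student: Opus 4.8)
The corollary chains together three implications, so I would prove each link in turn. The first implication is essentially immediate: Clarke regularity at $x$ gives $\T_x\calX \subseteq \liminf_k \T_{x_k}\calX$ for every sequence $x_k \to x$ in $\calX$, and $\liminf_k \T_{x_k}\calX \subseteq \limsup_k \T_{x_k}\calX$ always holds by Definition~\ref{def:lims_of_sets} (since $\limsup$ asks for $\liminf_k \dist(x, C_k) = 0$, which is implied by $\limsup_k \dist(x,C_k)=0$). The second implication is also routine: if $A \subseteq B$ then $A \subseteq \overline{\conv}(B)$ trivially, since $B \subseteq \overline{\conv}(B)$. So these two arrows are one-liners and the substance of the corollary is the final equivalence.

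For the equivalence ``$\T_x\calX \subseteq \overline{\conv}(\limsup_k \T_{x_k}\calX)$ for all sequences $x_k \to x$'' $\iff$ ``$x$ is not apocalyptic'', I would invoke Theorem~\ref{thm:apocalypse_char}, which already states that $x$ is apocalyptic iff there exists a sequence $x_k \to x$ with $\T_x\calX \not\subseteq \overline{\conv}(\limsup_k \T_{x_k}\calX)$. Negating both sides of that characterization gives exactly the claimed equivalence. So the honest content here is just careful bookkeeping: quantifier negation and the fact that Theorem~\ref{thm:apocalypse_char} has already done the hard work. I would present this as: by Theorem~\ref{thm:apocalypse_char}, $x$ is apocalyptic iff $\exists (x_k) \to x$ with $\T_x\calX \not\subseteq \overline{\conv}(\limsup_k \T_{x_k}\calX)$; hence $x$ is not apocalyptic iff for all sequences $(x_k) \to x$ we have $\T_x\calX \subseteq \overline{\conv}(\limsup_k \T_{x_k}\calX)$.

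**Where the difficulty lies.** There is essentially no difficulty in the corollary itself — it is a formal consequence of Theorem~\ref{thm:apocalypse_char} plus two trivial set-inclusion facts. The only point requiring a moment's care is the direction of the inclusions in the $\liminf$/$\limsup$ business: one must not confuse ``Clarke regular gives $\T_x\calX \subseteq \liminf$'' (a strong, useful containment) with the weaker ``$\T_x\calX \subseteq \limsup$''. I would state explicitly that $\liminf_k C_k \subseteq \limsup_k C_k$ for any sequence of sets, citing Definition~\ref{def:lims_of_sets} or \cite[Ex.~4.2]{rockafellar2009variational}, so the reader sees why the first implication goes through. Everything else is immediate, and the proof can be written in a few lines. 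If anything, the ``main obstacle'' was already discharged in proving Theorem~\ref{thm:apocalypse_char} (in Appendix~\ref{apdx:pf_of_apocs_char}), namely relating the polar-cone condition $-\nabla f(x) \in (\limsup_k \T_{x_k}\calX)^\circ$ to the existence of an apocalyptic cost function, which uses the bipolar theorem and a construction of $f$ with prescribed gradient at $x$.
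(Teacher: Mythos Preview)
Your proposal is correct and follows essentially the same approach as the paper: the paper's proof also uses $\liminf_k\T_{x_k}\calX\subseteq\limsup_k\T_{x_k}\calX$ for the first implication, calls the second implication trivial, and invokes Theorem~\ref{thm:apocalypse_char} for the final equivalence.
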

\begin{proof}
    Clarke regularity is equivalent to $\T_x\calX\subseteq \liminf_k\T_{x_k}\calX$ for all sequences $(x_k)_{k\geq 1}$ converging to $x$, and because $\liminf_k\T_{x_k}\calX\subseteq\limsup_k\T_{x_k}\calX$ we get the first implication.
    The second implication is trivial, and the last one is Theorem~\ref{thm:apocalypse_char}.
\end{proof}
We conjecture that none of the implications in Corollary~\ref{cor:apocalypse_suff_cond} can be reversed.

If $\calX$ is locally diffeomorphic to a Clarke regular set, then it is Clarke regular. 
That is because Clarke regularity only depends on the tangent cones to $\calX$, whose relevant properties are preserved under local diffeomorphisms, see Appendix~\ref{apdx:clarke_regularity_loc_diffeo}.
\begin{tbd}, see Appendix~\ref{apdx:clarke_regularity_loc_diffeo}.\end{tbd}
\begin{journal}. We provide the details in~\cite[App.~F]{levin2021finding}.\end{journal}
In particular,
\begin{corollary}
    A smooth embedded submanifold of $\calE$ with a boundary~\cite[p120]{lee_smooth} is Clarke regular, hence has no apocalypses.
\end{corollary}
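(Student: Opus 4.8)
The plan is to reduce Clarke regularity of $\calX$ to that of a convex local model, and then read off the absence of apocalypses from Corollary~\ref{cor:apocalypse_suff_cond}. By the definition of a smooth embedded submanifold of $\calE$ with boundary~\cite[p120]{lee_smooth}, every point $x\in\calX$ admits an open neighborhood $U$ of $x$ in $\calE$ together with a diffeomorphism $\Phi\colon U\to\Phi(U)\subseteq\calE$ onto an open set such that $\Phi(\calX\cap U)$ is a relatively open subset of either a linear subspace $L\subseteq\calE$ (when $x$ is an interior point) or a closed half-subspace $L^{+}=\{v\in L:\langle a,v\rangle\geq 0\}$ for some $a\in L$ (when $x$ is a boundary point). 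In both cases the model set $L$, resp.\ $L^{+}$, is convex.

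Next I would invoke two ingredients. First, convex sets are Clarke regular at each of their points~\cite[Ex.~6.8, Thm.~6.9]{rockafellar2009variational}; since a relatively open subset of a convex set has the same tangent cones at its points as the ambient convex set, and since Clarke regularity at a point is a purely local property of the set, $\Phi(\calX\cap U)$ is Clarke regular at $\Phi(x)$. Second, Clarke regularity is preserved under local diffeomorphisms: it is characterized entirely through the tangent cones $\T_{x}\calX$, and the relevant containments between tangent cones and their inner limits are stable under the covariant transformation of tangent cones induced by a diffeomorphism — this is exactly the content of Appendix~\ref{apdx:clarke_regularity_loc_diffeo}. Combining the two, $\calX\cap U$ is Clarke regular at $x$, hence $\calX$ is Clarke regular at $x$; as $x\in\calX$ was arbitrary, $\calX$ is Clarke regular everywhere.

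Finally, I would apply Corollary~\ref{cor:apocalypse_suff_cond}: a set that is Clarke regular at every one of its points has no apocalyptic points, so $\calX$ admits no apocalypses, which is the claim.

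I expect the only delicate point to be the bookkeeping at boundary points, namely ensuring that the local model can be taken to be an \emph{ambient} slice chart — a genuine diffeomorphism of a neighborhood in $\calE$ carrying $\calX$ onto (a relatively open piece of) a half-subspace — rather than merely an intrinsic chart of $\calX$. This is precisely what "embedded submanifold with boundary" guarantees, so once that definition is in hand there is no real obstacle; the rest is an application of the two cited facts together with the diffeomorphism-invariance established in the appendix.
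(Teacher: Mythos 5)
Your proposal is correct and follows essentially the same route as the paper: reduce to the convex local model (half-space or full subspace) given by a slice chart, invoke Clarke regularity of convex sets, transport it back via the diffeomorphism-invariance established in Appendix~\ref{apdx:clarke_regularity_loc_diffeo}, and conclude with Corollary~\ref{cor:apocalypse_suff_cond}. Your extra care in noting that the local model is only a \emph{relatively open} piece of a convex set, and that this is harmless because tangent cones and Clarke regularity are local, is a small but legitimate tightening of the paper's more telegraphic phrasing.
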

\begin{proof}
    Every point in a manifold with boundary has a neighborhood diffeomorphic to either a half-space or an entire Euclidean space. Both are convex and in particular, Clarke regular. 
\end{proof}


    A similar argument gives the following characterization of serendipities:
    \begin{theorem}\label{thm:serendipities_char}
        A point $x\in\calX$ is serendipitous if and only if there exists a sequence $(x_k)_{k\geq 1}\subseteq\mc X$ converging to $x$ such that $(\limsup_k\T_{x_k}\calX)^{\circ}\not\supseteq (\T_x\calX)^{\circ}$, or equivalently, $\limsup_k\T_{x_k}\calX \not\subseteq \overline{\tt{conv}}(\T_x\calX)$.
        A sufficient condition for $\calX$ not to have any serendipities is that $\limsup_k\T_{x_k}\calX\subseteq \T_{\lim_kx_k}\calX$ for any convergent sequence $(x_k)_{k\geq 1}\subseteq\calX$.
    \end{theorem}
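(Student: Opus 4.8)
The plan is to run the proof of Theorem~\ref{thm:apocalypse_char} "in reverse", interchanging the roles played by the limit point $x$ and the tail of the sequence. The one ingredient the two proofs share is a dictionary between projection norms and polars of outer limits of cones, which I would lift from (or re-derive alongside) the proof of Theorem~\ref{thm:apocalypse_char} in Appendix~\ref{apdx:pf_of_apocs_char}: for closed cones $(C_k)_{k\ge1}$ in $\calE$ and vectors $w_k\to w$, one has (i) $\|\Proj_{C_k}(w_k)\|\to0\implies w\in(\limsup_k C_k)^\circ$; (ii) $w\in(\limsup_k C_k)^\circ\implies\liminf_k\|\Proj_{C_k}(w_k)\|=0$; and, for a \emph{fixed} vector $w$, the sharper identity $\limsup_k\|\Proj_{C_k}(w)\|=\|\Proj_{\limsup_k C_k}(w)\|$. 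Statement (i) follows from the bound $\inner{w}{u}\le\|\Proj_{C_k}(w)\|\,\|u\|$, valid for every $u\in C_k$ (obtained by minimizing over $t\ge0$ in $\|\Proj_{C_k}(w)\|^2=\|w\|^2-\dist(w,C_k)^2\ge\|w\|^2-\|w-tu\|^2$), applied along any $v_k\in C_k$ with $v_k\to v$ for an arbitrary $v\in\limsup_k C_k$; statement (ii) follows from $\inner{\Proj_{C_k}(w_k)}{w_k}=\|\Proj_{C_k}(w_k)\|^2$ together with a compactness argument extracting a nonzero limit of the projections inside $\limsup_k C_k$ (see Appendix~\ref{apdx:cones}); the identity uses $\dist(w,\limsup_k C_k)=\liminf_k\dist(w,C_k)$ and monotone continuity. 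Combined with Proposition~\ref{prop:normprojtocone} these give, for fixed $w$, the equivalence $w\notin(\limsup_k C_k)^\circ\iff\limsup_k\|\Proj_{C_k}(w)\|>0$.

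With this in hand, the forward implication is immediate. Suppose $(x,(x_k)_{k\ge1},f)$ is a serendipity with parameter $\epsilon$. Put $w=-\nabla f(x)$ and $w_k=-\nabla f(x_k)$; since $f$ is continuously differentiable and $x_k\to x$, we have $w_k\to w$. Stationarity of $x$ and Proposition~\ref{prop:equiv_defns_of_1crit} give $w\in(\T_x\calX)^\circ$, while the fact that no $x_k$ is $\epsilon$-stationary gives $\|\Proj_{\T_{x_k}\calX}(w_k)\|>\epsilon$ for all $k$, hence $\liminf_k\|\Proj_{\T_{x_k}\calX}(w_k)\|\ge\epsilon>0$; the contrapositive of (ii) then yields $w\notin(\limsup_k\T_{x_k}\calX)^\circ$. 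So $w$ witnesses $(\limsup_k\T_{x_k}\calX)^\circ\not\supseteq(\T_x\calX)^\circ$. Conversely, given a sequence $x_k\to x$ in $\calX$ and a vector $w\in(\T_x\calX)^\circ\setminus(\limsup_k\T_{x_k}\calX)^\circ$, I would take the linear cost $f(y)=-\inner{w}{y}$, so that $\nabla f\equiv-w$; then $-\nabla f(x)=w\in(\T_x\calX)^\circ$ makes $x$ stationary, and since $w\notin(\limsup_k\T_{x_k}\calX)^\circ$ the fixed-vector equivalence gives $\limsup_k\|\Proj_{\T_{x_k}\calX}(w)\|>0$. Passing to a subsequence $(x_{k_j})$ realizing this $\limsup$ and discarding finitely many terms produces a sequence still converging to $x$ along which $\|\Proj_{\T_{x_{k_j}}\calX}(-\nabla f(x_{k_j}))\|$ is bounded below by some $\epsilon>0$; this triplet is a serendipity, so $x$ is serendipitous.

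The "equivalently" clause is pure polar calculus: for the cones $A=\limsup_k\T_{x_k}\calX$ and $B=\T_x\calX$, the bipolar theorem together with $K^\circ=(\overline{\conv}K)^\circ$ from Lemma~\ref{lem:coneproperties} give $B^\circ\subseteq A^\circ\iff\overline{\conv}(A)\subseteq\overline{\conv}(B)\iff A\subseteq\overline{\conv}(B)$ (both tangent cones contain $0$, so the bipolars are exactly the closed convex hulls, and $\overline{\conv}(B)$ being closed convex makes the last step trivial); negating this chain gives the stated equivalence. Finally, for the sufficient condition, assume $\limsup_k\T_{x_k}\calX\subseteq\T_{\lim_kx_k}\calX$ for every convergent sequence in $\calX$. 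Then for every $x_k\to x$, taking polars (which reverses inclusions) gives $(\T_x\calX)^\circ\subseteq(\limsup_k\T_{x_k}\calX)^\circ$, so the condition characterizing serendipitous points in the first part of the theorem is never met; hence $\calX$ has no serendipities.

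I expect the only genuine obstacle to be the nonconvex-cone subtlety hidden in step~(i): the usual variational inequality $\inner{w-\Proj_C(w)}{v}\le0$ for $v\in C$ fails when $C$ is not convex, so one must route through the weaker but still valid bound $\inner{w}{u}\le\|\Proj_C(w)\|\,\|u\|$ for $u\in C$, and one must carefully match the "for all $k$" quantifier in Definition~\ref{def:serendipities} with the $\limsup$ (not merely the $\liminf$) of the stationarity measures, which is why the converse direction must pass to a subsequence realizing that $\limsup$. If the corresponding lemmas are already in place from the proof of Theorem~\ref{thm:apocalypse_char}, the remainder is routine bookkeeping.
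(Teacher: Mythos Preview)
Your proposal is correct and follows essentially the same route the paper intends: the paper does not spell out a proof of Theorem~\ref{thm:serendipities_char}, merely stating that ``a similar argument'' to Theorem~\ref{thm:apocalypse_char} applies, and your argument is precisely that similar argument, built on Lemma~\ref{lem:polar_of_limsup} (your items (i)--(ii)), the Lipschitz property in Proposition~\ref{prop:normprojtocone}, and a linear test function for the converse. Your observation that the converse direction requires passing to a subsequence realizing the $\limsup$ (so that \emph{all} terms satisfy $\|\Proj_{\T_{x_k}\calX}(-\nabla f(x_k))\|>\epsilon$) is exactly the subtlety one must track here, and you handle it correctly; the ``sharper identity'' $\limsup_k\|\Proj_{C_k}(w)\|=\|\Proj_{\limsup_k C_k}(w)\|$ is true (it follows from $\dist(v,K)^2=\|v\|^2-\|\Proj_K(v)\|^2$, valid for any closed cone by Proposition~\ref{prop:normprojtocone}) but stronger than needed, since Lemma~\ref{lem:polar_of_limsup} already gives the required equivalence directly.
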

    This sufficient condition is equivalent to requiring the set-valued map $x\mapsto \T_x\calX$ to be outer semicontinuous in the sense of~\cite[Def.~5.4]{rockafellar2009variational}.
    Comparing the characterization of apocalypses in Theorem~\ref{thm:apocalypse_char} to that of serendipities in Theorem~\ref{thm:serendipities_char}, we see that the two are indeed complementary notions.


\section{Optimization through a smooth lift}
\label{sec:optimization}


In order to minimize a function $f \colon \calX \to \reals$ on a possibly complicated subset $\calX$ of a linear space $\calE$ as in~\eqref{eq:P},
we resort to a smooth map $\varphi \colon \calM \to \calE$ defined on a smooth manifold $\calM$ and such that $\varphi(\calM) = \calX$ (in words: $\varphi$ is a parameterization of $\calX$).
Then, minimizing $f$ on $\calX$ is equivalent to minimizing $g = f \circ \varphi$ on $\calM$, as expressed in problem~\eqref{eq:Q}.
If $f$ is smooth, then $g$ is smooth by composition. 
For our main problem~\eqref{eq:minfrank}, $\calE = \Rmn$, $\calX = \Rmnlr$ and we have listed three possible smooth lifts in the introduction, see eq.~\eqref{eq:low_rk_lifts}.

To find a stationary point of $f$ on $\calX$ (Definition~\ref{def:criticalP}), we seek a point $y$ on $\calM$ such that $\varphi(y)$ is stationary for $f$.
Crucially, it is \emph{not} sufficient to find a first-order stationary\footnote{A point $y \in \calM$ is first-order stationary for $g$ if for all smooth curves $c \colon \reals \to \calM$ passing through $y$ at $t = 0$, it holds that $(g \circ c)'(0) = 0$. If $\calM$ is embedded in a linear space, this is equivalent to Definition~\ref{def:criticalP} by~\cite[Prob.~3-8]{lee_smooth}.} point of $g$.\footnote{For example, it is easy to check that $(0, 0)$ is always stationary for $g$ with the $(L, R) \mapsto LR^\top$ lift, even though $X = LR^\top = 0$ need not be stationary for all $f$.}
However, for the three lifts in~\eqref{eq:low_rk_lifts} it turns out that \emph{second}-order stationary points of $g$ on $\calM$ \emph{do} always map to stationary points of $f$---see~\cite{ha2020criticallowrank} or Theorem~\ref{thm:stable_2implies1_LR} below for the $(L, R)$ lift, and~\cite[\S2.3]{eitan_thesis} for the other lifts in~\eqref{eq:low_rk_lifts}.
\begin{definition} \label{def:2criticalM}
	A point $y \in \calM$ is second-order stationary (or ``2-critical'' for short) for $g$ if, for all smooth curves $c \colon \reals \to \calM$ passing through $y$ at $t = 0$, it holds that $(g \circ c)'(0) = 0$ and $(g \circ c)''(0) \geq 0$.
\end{definition}
When $\calM$ is endowed with a Riemannian metric, 2-criticality is equivalent to
\begin{align}
    \nabla g(y) & = 0 & \textrm{ and } && \nabla^2 g(y) & \succeq 0,
\end{align}
where $\nabla g$ and $\nabla^2 g$ respectively denote the Riemannian gradient and Hessian of $g$.
When $\calM$ is a Euclidean space this recovers the standard second-order necessary optimality conditions for unconstrained optimization.
See~\cite{absil_book,optimOnMans} for further background on Riemannian optimization.

Motivated by the above, we turn our attention to the literature on optimization algorithms with deterministic second-order stationarity guarantees.
The two main algorithms for this task are trust-region methods (TR) and adaptive cubic regularization methods (ARC).
ARC does not explicitly limit the step sizes it considers, which makes it more delicate to ensure all assessed points remain in a compact set and hence have an accumulation point.
For this reason, we favor TR methods.

Common variants of TR find \emph{approximate} 2-critical points (see Definition~\ref{def:2criticalMapproximate} below) in finite time with a preset tolerance on which the algorithm depends~\cite{cartis2012complexity,global_convergence_Nicolas}. This is insufficient for our purpose. We seek an algorithm that, if allowed to run for infinitely-many iterations, produces an infinite sequence accumulating at an actual 2-critical point. 
To our advantage, Curtis et al.~\cite{curtis2018concise} propose a trust-region method for optimization in Euclidean space which does precisely that. 
Below, we generalize their algorithm and its analysis to optimization on Riemannian manifolds.
This is done without friction following~\cite{genrtr,global_convergence_Nicolas}.

Since the algorithm will be terminated after finitely-many steps in practice, we also want to show that approximate 2-critical points for $g$ on $\calM$ map to approximate stationary points for $f$ on $\calX$. As we know from our study of apocalypses, bounding the standard measure of approximate stationarity on $\calX$ in Definition~\ref{def:criticalPapprox} may not be informative. We therefore give a more refined bound in Section~\ref{subsec:approxstationarity}.



Of the three lifts listed in~\eqref{eq:low_rk_lifts}, it might seem that $\varphi(L, R) = LR^\top$ defined on the Euclidean space $\calM = \Rmr \times \Rnr$ is the simplest one, as it does not even require the notion of smooth manifold---perhaps in part for that reason, it is also the most commonly used.
However, its simplicity hides other difficulties.
Specifically, the aforementioned algorithms require $g$ to have properties such as Lipschitz continuous derivatives or compact sublevel sets.
Yet, even if $f$ has those properties, $g$ generally does not.
For example, if $f$ is quadratic then $g(L, R) = f(LR^\top)$ is quartic: this precludes Lipschitz continuity of the gradient (see Dragomir et al.~\cite{dragomir2021quartic} for an interesting take on this issue).
Moreover, the sublevel sets of $g$ are never bounded because the fibers of the lift $\varphi$---that is, the sets of the form $\varphi^{-1}(X)$ for $X\in\Rmnlr$---are unbounded and $g = f \circ \varphi$ is necessarily constant over those fibers.
In contrast, the other two lifts have compact fibers.

Still, because the $(L, R) \mapsto LR^\top$ lift is so common in practice, it is worthwhile to build theory that allows for its use.
To this end, we further generalize Curtis et al.'s algorithm in the following way:
We allow iterates to be modified by a \emph{hook}\footnote{We borrow the term \emph{hook} from computer programming, where it designates the opportunity to inject code into existing code via a function handle; see \url{https://en.wikipedia.org/wiki/Hooking}.} $\Phi \colon \calM \to \calM$, provided this does not increase the value of the cost function.
The resulting method is listed as Algorithm~\ref{algo:TRbalanced}.

The hook allows us to \emph{rebalance} iterates $(L_k, R_k)$ without interfering with convergence (the cost function acts as a Lyapunov function, and that is not affected by hooking).
Rebalancing $(L, R)$ factors is a standard technique to prevent the iterates from drifting off to infinity along fibers of the lift, effectively resolving the issues caused by their unboundedness.\footnote{See also~\cite{ma2021beyondbalancing} for a discussion of situations where factors automatically remain balanced.}
Then, assuming $f$ has bounded sublevel sets we can ensure that iterates on $\calM$ remain in a compact region.
Compactness suffices to ensure some Lipschitz continuity.
In turn, this leads to the global convergence guarantees we need. 

\subsection{A hooked Riemannian trust-region method}

\begin{algorithm}[t]
	\caption{Trust-region method from~\cite{curtis2018concise} extended to manifolds and a hook $\Phi$}
	\label{algo:TRbalanced}
	\begin{algorithmic}[1]
		\State \textbf{Parameters:} $\gamma_c, \eta \in (0, 1)$, $0 < \underline{\gamma} \leq \overline{\gamma} < \infty$.
		\State \textbf{Input:} $y_0 \in \calM$, $\gamma_0 \in [\underline{\gamma}, \overline{\gamma}]$.
		\State Redefine the initial point: $y_0 \leftarrow \Phi(y_0)$. \Comment{Hook initial point.}
		\For{$k$ \textbf{in} $0, 1, 2, \ldots$}
		\If{$\lambda_k < 0$ \textbf{and} $\|\nabla g(y_k)\|^2 < |(\lambda_k)_-|^3$} 
		\State \label{step:deltalambda} Set $\delta_k = \gamma_k |(\lambda_k)_-|$.
		\Else
		\State \label{step:deltagrad} Set $\delta_k = \gamma_k \|\nabla g(y_k)\|$.
		\EndIf
		\State \label{step:sk} Let $s_k \in \T_{y_k} \calM$ satisfy $\|s_k\| \leq \delta_k$ and $m_k(s_k) \leq m_k(s_k^c)$, e.g., as in~\eqref{eq:skc}.
		\State \label{step:rhok} Compute $\rho_k$ as follows (if the denominator is zero, set $\rho_k = 1$ instead):
		\begin{align}
			\rho_k & = \frac{g(y_k) - g(\Retr_{y_k}(s_k))}{m_k(0) - m_k(s_k)}.
			\label{eq:rhok}
		\end{align}
		\If{$\rho_k \geq \eta$}
		\State \label{step:sucessrebalanced} Set $y_{k+1} = \Phi(\Retr_{y_k}(s_k))$ and choose any $\gamma_{k+1} \in [\underline{\gamma}, \overline{\gamma}]$.  \Comment{Hook new point.}
		\Else
		\State Set $y_{k+1} = y_k$ and $\gamma_{k+1} = \gamma_c \gamma_k$.
		\EndIf
		\EndFor
	\end{algorithmic}
\end{algorithm}

Algorithm~\ref{algo:TRbalanced} is essentially the non-standard trust-region method developed by Curtis et al.~\cite{curtis2018concise}, further modified so that it can operate on Riemannian manifolds and such that it allows a hook $\Phi \colon \calM \to \calM$ in Step~\ref{step:sucessrebalanced} (and at initialization) to modify the sequence of iterates.
Setting $\Phi(y) = y$ removes the effect of hooking, but then our guarantees may not apply---see Proposition~\ref{prop:propervarphigoodPhi} below.
At least, we require the following from $\Phi$:
\begin{assumption} \label{assu:Phidecrease}
    The hook $\Phi \colon \calM \to \calM$ satisfies $g(\Phi(y)) \leq g(y)$ for all $y \in \calM$.
\end{assumption}

This section studies the behavior of Algorithm~\ref{algo:TRbalanced} applied to the general problem $\min_{y\in\calM} g(y)$ assuming $g$ is twice continuously differentiable.
We review the essential steps of the analysis in~\cite{curtis2018concise}, with the necessary comments and (light) modifications necessary to accommodate the proposed extensions.
Remarkably, the original proofs apply mostly as is: we do not repeat them.

Given an initial point $y_0 \in \calM$, Algorithm~\ref{algo:TRbalanced} generates a sequence $(y_k)_{k \geq 0}$ on $\calM$.
To do so, it uses a \emph{retraction} $\Retr$, that is, a smooth map from the tangent bundle of $\calM$ to $\calM$ with the property that $c(t) = \Retr_y(tu)$ is a smooth curve on $\calM$ satisfying $c(0) = y$ and $c'(0) = u$~\cite[\S4.1]{absil_book}.
For $\calM$ a linear space, it is convenient to use $\Retr_y(u) = y+u$.

We endow $\calM$ with a Riemannian metric, so that each tangent space $\T_y\calM$ is endowed with an inner product $\inner{\cdot}{\cdot}_y$ together with an associated norm $\|\cdot\|_y$.
We typically omit subscripts when referring to these objects, thus writing simply $\inner{\cdot}{\cdot}$ and $\|\cdot\|$.
Context distinguishes them from their counterparts on the Euclidean space $\calE$.
We let $\nabla g$ and $\nabla^2 g$ denote the Riemannian gradient and Hessian of $g$, respectively.

At iterate $y_k \in \calM$, we consider the \emph{pullback} $\hat g_k = g \circ \Retr_{y_k} \colon \T_{y_k} \calM \to \reals$.
This is a function on a Euclidean space.
We approximate it with a second-order Taylor expansion, yielding a quadratic model $m_k \colon \T_{y_k}\calM \to \reals$:
\begin{align}
	m_k(s) & = g(y_k) + \innersmall{s}{\nabla \hat g_{k}(0)} + \tfrac{1}{2} \innersmall{s}{\nabla^2 \hat g_{k}(0)[s]}.
\end{align}
It is helpful to note that $\nabla \hat g_k(0) = \nabla g(y_k)$ and, if the retraction is second order~\cite[Prop.~5.5.5]{absil_book}, also $\nabla^2 \hat g_k(0) = \nabla^2 g(y_k)$.
Moreover, we write
\begin{align}
	\lambda_k & = \lambdamin(\nabla^2 \hat g_k(0)) & \textrm{ and }  && (\lambda_k)_- & = \min(\lambda_k, 0)
\end{align}
to denote the left-most eigenvalue of $\nabla^2 \hat g_k(0)$ and its negative part.

The iterations of Algorithm~\ref{algo:TRbalanced} are of two types:
\begin{align}
	\calK_1 & = \{ k \in \mathbb{N} : \delta_k \textrm{ is set by Step~\ref{step:deltagrad}} \}, &
	\calK_2 & = \{ k \in \mathbb{N} : \delta_k \textrm{ is set by Step~\ref{step:deltalambda}} \}.
\end{align}
Depending on the type of iteration of $k$, we define a different \emph{Cauchy step} $s_k^c$.
Specifically, let $e_k \in \T_{y_k} \calM$ denote a unit-norm eigenvector of $\nabla^2 \hat g_k(0)$ associated to its least eigenvalue $\lambda_k$, with sign chosen so that $\inner{e_k}{\nabla g(y_k)} \leq 0$.
Then, with
\begin{align}
	u_k & = \begin{cases}
		-\nabla g(y_k) & \textrm{ if } k \in \calK_1, \\
		 e_k & \textrm{ if } k \in \calK_2,
	\end{cases}
	\label{eq:uk}
\end{align}
we let the Cauchy step be defined by 
\begin{align}
	s_k^c & = t_k u_k & \textrm{ where } && t_k & \in \argmin{t \geq 0}\ m_k(tu_k) \textrm{ subject to } \|tu_k\| \leq \delta_k.
	\label{eq:skc}
\end{align}

This completes the notation required to interpret Algorithm~\ref{algo:TRbalanced}.
Our analysis follows in the steps of Curtis et al.~\cite{curtis2018concise}, incorporating elements of the papers~\cite{global_convergence_Nicolas,agarwal2018arcfirst} to handle the Riemannian extension.
Details are in Appendix~\ref{appdx:hookedRTR}.

We make a regularity assumption on the composition of $g$ with the retraction---see also Corollary~\ref{cor:optimassumptionlipschitzhelper} in the appendix which gives a sufficient condition for these assumptions.
In the Euclidean case, the required properties hold if we assume Lipschitz continuity of the gradient and Hessian.
This extends to the Riemannian case~\cite[\S10.4, \S10.7]{optimOnMans}.
\begin{assumption} \label{assu:upstairs}
	There exist $L_1, L_2 \geq 0$ such that, for all $(y_k, s_k)$ generated by Algorithm~\ref{algo:TRbalanced}, with $\hat g_k = g \circ \Retr_{y_k}$, we have $\|\nabla^2 \hat g_k(0)\| \leq L_1$ (in operator norm) and
	\begin{align}
		\left| g(\Retr_{y_k}(s_k)) - g(y_k) - \inner{s_k}{\nabla g(y_k)} \right| & \leq \frac{L_1}{2} \|s_k\|^2, \\
		\left| g(\Retr_{y_k}(s_k)) - g(y_k) - \inner{s_k}{\nabla g(y_k)} - \frac{1}{2} \inner{s_k}{\nabla^2 \hat g_k(0)[s_k]} \right| & \leq \frac{L_2}{6} \|s_k\|^3.
	\end{align}
\end{assumption}
The main result of this subsection follows.
\begin{theorem} \label{thm:optimtheorem1}
	Given a sequence $(y_k)_{k \geq 0}$ generated by Algorithm~\ref{algo:TRbalanced}, let
	\begin{align}
		K(\epsilon_1, \epsilon_2) & = \left|\{ k : \|\nabla g(y_k)\| > \epsilon_1 \textrm{ or } |(\lambda_k)_-| > \epsilon_2 \}\right|.
	\end{align}
	Under Assumptions~\ref{assu:Phidecrease} and~\ref{assu:upstairs}, and further assuming there exists $\gmin \in \reals$ such that $g(y_k) \geq \gmin$ for all $k$, we have for all $\epsilon_1, \epsilon_2 > 0$:
	\begin{align}
		K(\epsilon_1, \epsilon_2) & \leq \ceil{\log_{\gamma_c}\!\left(\frac{\gammamin}{\overline{\gamma}}\right) + 1} \floor{\frac{g(y_0) - \gmin}{\kappamin} \max\!\left( \frac{1}{\epsilon_1^2}, \frac{1}{\epsilon_2^3} \right) },
	\end{align}
    where we use the constants $\gammamin$ and $\kappamin$ defined (with $L_1, L_2$ as in Assumption~\ref{assu:upstairs}):
    \begin{align}
        \gammamin & = \min\!\left( \underline{\gamma}, \frac{\gamma_c}{1+L_1}, \frac{\gamma_c(1-\eta)}{2L_1}, \frac{3\gamma_c(1-\eta)}{L_2} \right) \in (0, 1), 
        \label{eq:gammamin} \\
        \kappamin & = \frac{1}{2} \eta \gammamin^2.
        \label{eq:kappamin}
    \end{align}
    All accumulation points of $(y_k)_{k \geq 0}$ (if any) are 2-critical points of $\min_{y\in\calM} g(y)$.
\end{theorem}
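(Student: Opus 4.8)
The plan is to follow the structure of Curtis et al.~\cite{curtis2018concise}, adapted to the Riemannian setting and to the presence of the hook $\Phi$, in two movements: first establish the finite bound on $K(\epsilon_1, \epsilon_2)$, and then deduce the accumulation-point statement from it. For the first movement, the key is a \emph{per-successful-step decrease} lemma: whenever $\rho_k \geq \eta$ and $k$ is an iteration with $\|\nabla g(y_k)\| > \epsilon_1$ or $|(\lambda_k)_-| > \epsilon_2$, the model decrease $m_k(0) - m_k(s_k)$ is bounded below by a positive quantity of order $\gamma_k^2 \max(\epsilon_1^2, \epsilon_2^3)$ (using that $m_k(0)-m_k(s_k) \ge m_k(0) - m_k(s_k^c)$ and a standard Cauchy-decrease estimate split according to $k \in \calK_1$ or $k \in \calK_2$, together with the two cases in Steps~\ref{step:deltalambda}--\ref{step:deltagrad}). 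Combined with $g(y_k) - g(\Retr_{y_k}(s_k)) \geq \eta (m_k(0)-m_k(s_k))$ on successful steps \emph{and} with Assumption~\ref{assu:Phidecrease} (so that the hook only helps: $g(y_{k+1}) = g(\Phi(\Retr_{y_k}(s_k))) \le g(\Retr_{y_k}(s_k))$), this gives a strict decrease of $g$ at each ``bad'' successful step. Next one shows, via Assumption~\ref{assu:upstairs} (the $L_1$-Lipschitz-type and $L_2$-Lipschitz-type bounds), that once $\gamma_k$ drops below the threshold $\gammamin$ in~\eqref{eq:gammamin}, the step is necessarily accepted; hence the number of consecutive unsuccessful steps between two successful ones is at most $\ceil{\log_{\gamma_c}(\gammamin/\overline\gamma)+1}$, because each unsuccessful step multiplies $\gamma_k$ by $\gamma_c < 1$ and $\gamma_k$ is reset in $[\underline\gamma,\overline\gamma]$ after success. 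Telescoping the decrease of $g$ over bad successful steps against the lower bound $\gmin$ yields the stated count.

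The second movement---that every accumulation point of $(y_k)$ is 2-critical---follows from the finite bound essentially for free, but one should spell it out. Since $g(y_k)$ is non-increasing (successful steps decrease $g$, unsuccessful steps keep $y_{k+1}=y_k$, and the hook is non-increasing by Assumption~\ref{assu:Phidecrease}) and bounded below by $\gmin$, it converges. Suppose $y_{k_j} \to y^\star$ along a subsequence. For any $\epsilon_1, \epsilon_2 > 0$, the set $\{k : \|\nabla g(y_k)\| > \epsilon_1 \text{ or } |(\lambda_k)_- | > \epsilon_2\}$ is finite by the bound just proved, so $\|\nabla g(y_k)\| \to 0$ and $(\lambda_k)_- \to 0$ as $k \to \infty$ (along the whole sequence, in fact). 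By continuity of $y \mapsto \nabla g(y)$ we get $\nabla g(y^\star) = \lim_j \nabla g(y_{k_j}) = 0$. For the Hessian, $\nabla^2 \hat g_{k_j}(0) = \nabla^2 g(y_{k_j})$ when a second-order retraction is used (and more generally $\nabla^2\hat g_k(0) = \nabla^2 g(y_k) + \mathrm{Hess}$-of-retraction-correction terms that vanish once $\nabla g(y_k)\to 0$, by~\cite[Prop.~5.5.5]{absil_book}); hence $\lambda_{k_j} = \lambdamin(\nabla^2 \hat g_{k_j}(0)) \to \lambdamin(\nabla^2 g(y^\star))$ by continuity of the smallest eigenvalue, while $(\lambda_{k_j})_- \to 0$ forces $\lambdamin(\nabla^2 g(y^\star)) \ge 0$, i.e.\ $\nabla^2 g(y^\star) \succeq 0$. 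Together with $\nabla g(y^\star) = 0$ this is precisely 2-criticality (Definition~\ref{def:2criticalM}), via the characterization in terms of curves $c$ through $y^\star$ and the identities $(g\circ c)'(0) = \inner{c'(0)}{\nabla g(y^\star)}$, $(g\circ c)''(0) = \inner{c'(0)}{\nabla^2 g(y^\star)[c'(0)]} + \inner{\nabla g(y^\star)}{c''(0)}$.

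The main obstacle I anticipate is not the logical skeleton---which transplants almost verbatim from~\cite{curtis2018concise}---but rather verifying carefully that the two extensions (Riemannian geometry and the hook) do not break any of the quantitative estimates. For the hook: one must check that $\Phi$ is applied \emph{after} computing $\rho_k$ and the decrease, so it never inflates $g$, never changes $\gamma_{k+1}$, and never appears in the per-step model-decrease argument; the cost function is a genuine Lyapunov function and hooking is transparent to it. The delicate point is that the accumulation-point argument must use $\nabla g(y_k) \to 0$ \emph{for the actual iterates} $y_k = \Phi(\cdots)$, which is fine since the finite-$K$ bound is stated in terms of these $y_k$. For the Riemannian part: the subtlety is that $m_k$ is built from the pullback $\hat g_k = g\circ\Retr_{y_k}$, so $\nabla^2\hat g_k(0)$ need not equal $\nabla^2 g(y_k)$ unless the retraction is second order; Assumption~\ref{assu:upstairs} is phrased directly in terms of $\hat g_k$ and $\nabla^2\hat g_k(0)$, sidestepping this in the finite-bound part, but in the limit argument one needs $\lambda_k \to 0$ to translate into a statement about $\nabla^2 g(y^\star)$. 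This is handled by the standard fact that the discrepancy between $\nabla^2\hat g_k(0)$ and $\nabla^2 g(y_k)$ is controlled by $\|\nabla g(y_k)\|$ (vanishing in the limit), as in~\cite[\S10.7]{optimOnMans} or~\cite{agarwal2018arcfirst}; I would cite this rather than re-derive it. Everything else---the case split $\calK_1$ vs.\ $\calK_2$, the Cauchy-decrease bounds~\eqref{eq:skc}, the threshold computation~\eqref{eq:gammamin}, and the telescoping---goes through as in the Euclidean source, so I would state it as ``the original proofs apply mutatis mutandis'' and only flag the two checkpoints above, deferring full details to Appendix~\ref{appdx:hookedRTR}.
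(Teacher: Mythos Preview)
Your proposal is correct and follows essentially the same route as the paper: telescoping the per-successful-step decrease (Lemma~\ref{lem:optimlemma2}) against $g(y_0)-\gmin$, bounding consecutive unsuccessful steps via the $\gamma_k \geq \gammamin$ invariant (Lemmas~\ref{lem:optimgammamin} and~\ref{lem:optimlemma3}), and then passing to accumulation points. The one place you work slightly harder than necessary is the Hessian limit: you argue that $\nabla^2\hat g_{k_j}(0) \to \nabla^2 g(y^\star)$ along the sequence via retraction-correction terms vanishing as $\|\nabla g(y_{k_j})\| \to 0$, whereas the paper sidesteps this by using continuity of $y \mapsto \nabla^2(g\circ\Retr_y)(0)$ to obtain $\lambdamin(\nabla^2(g\circ\Retr_{y^\star})(0)) \geq 0$ directly at the limit, then invoking Lemma~\ref{lem:optim2criticalcharact} (which states that $\nabla g(y)=0$ implies $\nabla^2(g\circ\Retr_y)(0) = \nabla^2 g(y)$ for \emph{any} retraction, not just second-order ones). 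Both arguments are valid; the paper's is marginally cleaner since it only needs the pullback--Riemannian Hessian identity at the single point $y^\star$ rather than an asymptotic statement along the sequence.
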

%
If we can compute left-most eigenvectors of symmetric linear maps (thus providing $e_k$ in~\eqref{eq:uk}), then the Cauchy step $s_k^c$~\eqref{eq:skc} can be computed explicitly and we can set $s_k = s_k^c$ to meet the provisions of the analysis.
This is not required however: it is sufficient for $s_k$ to satisfy $\|s_k\| \leq \delta_k$ and $m_k(s_k) \leq m_k(s_k^c)$.
Moreover, the analysis can be relaxed to accommodate approximate left-most eigenvalue/eigenvector computation.
There exist many practical methods to compute trial steps in trust-region methods, see for example~\cite{conn2000trust}.
\begin{tbd} 

If we can compute left-most eigenvectors of symmetric linear maps (thus providing $e_k$ in~\eqref{eq:uk}), then the Cauchy step $s_k^c$~\eqref{eq:skc} can be computed explicitly and we can set $s_k = s_k^c$ to meet the provisions of the analysis.
This is not required however: it is sufficient for $s_k$ to satisfy $\|s_k\| \leq \delta_k$ and $m_k(s_k) \leq m_k(s_k^c)$.
Moreover, the analysis can be relaxed to accommodate approximate left-most eigenvalue/eigenvector computation.
There exist many practical methods to compute trial steps in trust-region methods, see for example~\cite{conn2000trust}.
\end{tbd}

\subsection{Rebalancing maps as hooks}\label{sec:rebalancing_maps}

The previous section provides an algorithm to find 2-critical points of~\eqref{eq:Q} under Assumptions~\ref{assu:Phidecrease} and~\ref{assu:upstairs}.
It remains to show that those assumptions can be met under reasonable conditions on~\eqref{eq:P}. 
Standard arguments show that Assumption~\ref{assu:upstairs} holds if $g$ is three times continuously differentiable and the iterates remain in a compact set, see Corollary~\ref{cor:optimassumptionlipschitzhelper} in Appendix~\ref{apdx:lip_assump}. In this section, we identify a class of hooks $\Phi$ that guarantee the latter condition while satisfying Assumption~\ref{assu:Phidecrease}. We begin by identifying a set containing all the iterates of Algorithm~\ref{algo:TRbalanced}.

\begin{theorem} \label{thm:optimthmupstairs}
	If $g$ is three times continuously differentiable and the hook satisfies $\varphi \circ \Phi = \varphi$,
    then the sequence $(y_k)_{k \geq 0}$ generated by Algorithm~\ref{algo:TRbalanced} remains in the set
	\begin{align}
		\calL & = \Phi\!\left( \{ y \in \calM : g(y) \leq g(y_0) \} \right) = \Phi\!\left(\varphi^{-1}\!\left( \left\{ x \in \calX : f(x) \leq f(x_0) \right\} \right) \right), \label{eq:calL}
	\end{align}
	where $x_0 = \varphi(y_0)$.
	If $\calL$ is contained in a compact set, then 
	$(y_k)$ admits at least one accumulation point, and any such point
	is 2-critical for~\eqref{eq:Q}.
\end{theorem}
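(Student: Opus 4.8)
The plan is to establish the two displayed equalities describing $\calL$, then deduce invariance of the iterates, and finally invoke Theorem~\ref{thm:optimtheorem1}. First I would verify the chain of set identities. The hypothesis $\varphi \circ \Phi = \varphi$ means that hooking moves a point $y$ to another point in the same fiber $\varphi^{-1}(\varphi(y))$; since $g = f \circ \varphi$, this gives $g(\Phi(y)) = f(\varphi(\Phi(y))) = f(\varphi(y)) = g(y)$, which in particular re-confirms Assumption~\ref{assu:Phidecrease} (with equality). For the second equality in~\eqref{eq:calL}, one uses that $g(y) \le g(y_0)$ iff $f(\varphi(y)) \le f(\varphi(y_0)) = f(x_0)$, i.e.\ $\{y : g(y) \le g(y_0)\} = \varphi^{-1}(\{x \in \calX : f(x) \le f(x_0)\})$; applying $\Phi$ to both sides yields the claim.

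Next I would show the iterates stay in $\calL$. At initialization, Step~3 of Algorithm~\ref{algo:TRbalanced} sets $y_0 \leftarrow \Phi(y_0)$; write $\tilde y_0$ for the original input and $y_0 = \Phi(\tilde y_0)$ for the post-hook point. Since $g(\Phi(\tilde y_0)) = g(\tilde y_0)$ under our hypothesis (or $\le$ under Assumption~\ref{assu:Phidecrease}), we have $g(y_0) \le g(\tilde y_0)$, so $y_0 \in \Phi(\{y : g(y) \le g(\tilde y_0)\})$; more to the point, after redefining the reference value to be $g(y_0)$, trivially $y_0 = \Phi(y_0) \in \calL$ (here one must be slightly careful about whether $g(y_0)$ in~\eqref{eq:calL} refers to the pre- or post-hook value; with the equality $\varphi\circ\Phi=\varphi$ they coincide, so the statement is clean). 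For the inductive step, suppose $y_k \in \calL \subseteq \{y : g(y) \le g(y_0)\}$. By construction $g$ is nonincreasing along the algorithm: on a successful step, $\rho_k \ge \eta > 0$ and $m_k(0) - m_k(s_k) \ge 0$ (the Cauchy step guarantees $m_k(s_k) \le m_k(s_k^c) \le m_k(0)$), so $g(\Retr_{y_k}(s_k)) \le g(y_k)$, and then $g(y_{k+1}) = g(\Phi(\Retr_{y_k}(s_k))) \le g(\Retr_{y_k}(s_k)) \le g(y_k) \le g(y_0)$; on an unsuccessful step $y_{k+1} = y_k$. In either case $y_{k+1} \in \{y : g(y) \le g(y_0)\}$, and since $y_{k+1}$ is literally of the form $\Phi(\cdot)$ (either $\Phi(\Retr_{y_k}(s_k))$ or $y_k = \Phi(y_k')$ for the point $y_k'$ it was produced from, or $y_0$ which was hooked), it lies in $\calL$.

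Finally, if $\calL$ is contained in a compact set $\mathcal{C}$, then the whole sequence $(y_k)$ lies in $\mathcal{C}$, hence has a convergent subsequence, so at least one accumulation point exists. Moreover $g$ three times continuously differentiable on a compact set satisfying the iterates implies Assumption~\ref{assu:upstairs} holds (via Corollary~\ref{cor:optimassumptionlipschitzhelper}), and $g$ is bounded below on $\mathcal{C}$ by continuity, supplying the constant $\gmin$. Thus all hypotheses of Theorem~\ref{thm:optimtheorem1} are met, and its conclusion gives that every accumulation point of $(y_k)$ is 2-critical for~\eqref{eq:Q}. The main obstacle I anticipate is purely bookkeeping rather than mathematical: tracking the hook applied at initialization (Step~3) so that the reference sublevel value $g(y_0)$ in~\eqref{eq:calL} is unambiguous, and making sure each $y_{k+1}$ is genuinely exhibited as an element of the image of $\Phi$ (not merely in the sublevel set) so that membership in $\calL = \Phi(\cdots)$—as opposed to the larger plain sublevel set—is justified; the hypothesis $\varphi \circ \Phi = \varphi$, which forces $g \circ \Phi = g$, is what makes all of this consistent.
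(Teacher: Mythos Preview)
Your proposal is correct and follows essentially the same route as the paper: deduce $g\circ\Phi=g$ from $\varphi\circ\Phi=\varphi$ (hence Assumption~\ref{assu:Phidecrease}), show by induction that each iterate lies in $\calL$ because it is either the previous iterate or of the form $\Phi(y)$ with $g(y)\le g(y_k)$, then use compactness plus Corollary~\ref{cor:optimassumptionlipschitzhelper} to supply the hypotheses of Theorem~\ref{thm:optimtheorem1}. The extra bookkeeping you flag (pre- versus post-hook $y_0$, and exhibiting unsuccessful iterates as images of $\Phi$ via the induction hypothesis) is handled more tersely in the paper but your treatment is equally valid.
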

\begin{proof}
    Assumption~\ref{assu:Phidecrease} holds because $g \circ \Phi = f \circ (\varphi \circ \Phi) = f \circ \varphi = g$.
	By design, $y_0$ is in $\calL$. 
	Algorithm~\ref{algo:TRbalanced} guarantees that $y_{k+1}$ is either equal to $y_k$ or set to $\Phi(y)$ for some $y$ which satisfies $g(y) \leq g(y_k)$.
	By induction, it follows that $(y_k)$ is included in $\Phi\!\left( \{ y \in \calM : g(y) \leq g(y_0) \} \right)$, which is the definition of $\calL$.
	Now assume $\calL$ is contained in a compact set.
	Assumption~\ref{assu:upstairs} then holds by Corollary~\ref{cor:optimassumptionlipschitzhelper}.
	It follows that $g(y)$ is lower-bounded by some finite $\gmin$ for all $y \in \calL$, therefore also for all $y \in (y_k)$.
	By Bolzano--Weierstrass, $(y_k)$ has an accumulation point. 
	Apply Theorem~\ref{thm:optimtheorem1} to verify that all accumulation points of $(y_k)$ are 2-critical for~\eqref{eq:Q}.
\end{proof}

Motivated by Theorem~\ref{thm:optimthmupstairs}, we introduce a helpful class of hooks we call \emph{rebalancing maps}.
Note that we do \emph{not} require $\Phi$ to be continuous, \emph{nor} do we require $\Phi^2 = \Phi$.
In words, the conditions below require that (a) rebalancing a point on $\calM$ does not change its projection through $\varphi$, and (b) rebalanced points cannot drift off to infinity along fibers of $\varphi$.
\begin{definition} \label{def:goodPhi}
	The map $\Phi \colon \calM \to \calM$ is a \emph{rebalancing map} with respect to a lift $\varphi \colon \calM \to \calE$ of $\calX = \varphi(\calM)$ if
	\begin{enumerate}
		\item[(a)] $\varphi \circ \Phi = \varphi$, and
		\item[(b)] $\Phi(\varphi^{-1}(B))$ is bounded for all bounded $B \subseteq \calX$.
	\end{enumerate}
	(Boundedness on $\calM$ and $\calE$ is assessed with respect to Riemannian and Euclidean distances, respectively.)
\end{definition}
\begin{lemma}
	Assume $\calM$ is complete, $\Phi$ is a rebalancing map with respect to $\varphi$ and
    $\{ x \in \calX : f(x) \leq f(x_0) \}$ is bounded.
    Then the closure of $\calL$~\eqref{eq:calL} is compact.
\end{lemma}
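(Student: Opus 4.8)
The plan is to chain together three observations: the sublevel set of $f$ on $\calX$ is bounded by hypothesis, a rebalancing map carries preimages of bounded sets to bounded sets, and in a complete Riemannian manifold closed bounded sets are compact (Hopf--Rinow).

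First I would set $B = \{ x \in \calX : f(x) \leq f(x_0) \}$, which is bounded by assumption, and recall from~\eqref{eq:calL} that $\calL = \Phi\!\left(\varphi^{-1}(B)\right)$ with $x_0 = \varphi(y_0)$. Since $B$ is a bounded subset of $\calX$, Definition~\ref{def:goodPhi}(b) applies directly and yields that $\calL$ is bounded in $\calM$ (with respect to the Riemannian distance). Note that we only conclude compactness of the \emph{closure} of $\calL$, not of $\calL$ itself, precisely because a rebalancing map need not be continuous and so $\calL$ need not be closed.

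Next, observe that the closure $\overline{\calL}$ of a bounded set in a metric space is again bounded---it has the same diameter---and is of course closed. Since $\calM$ is a complete Riemannian manifold, the Hopf--Rinow theorem guarantees that every closed and bounded subset of $\calM$ is compact. Applying this to $\overline{\calL}$ completes the argument.

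I do not anticipate any genuine obstacle here: the substantive content was packaged earlier, in the definition of rebalancing maps and in Theorem~\ref{thm:optimthmupstairs}, and this lemma is the short bridge that lets us discharge the compactness hypothesis of Theorem~\ref{thm:optimthmupstairs} from the natural assumptions on~\eqref{eq:P}. The only points requiring a modicum of care are that property (b) of rebalancing maps is stated for bounded subsets of $\calX$, so one must first note that $B \subseteq \calX$ (immediate from its definition), and that ``bounded'' on $\calM$ is measured with the Riemannian distance, which is exactly the metric relevant to Hopf--Rinow.
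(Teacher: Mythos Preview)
Your proof is correct and follows essentially the same route as the paper's: both identify $B$ as the bounded sublevel set in $\calX$, use property~(b) of a rebalancing map to conclude $\calL = \Phi(\varphi^{-1}(B))$ is bounded in $\calM$, and then invoke completeness of $\calM$ (via Hopf--Rinow) to deduce that the closure of this bounded set is compact. If anything, your version spells out the Hopf--Rinow step and the remark about $\Phi$ not being continuous more explicitly than the paper does.
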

\begin{proof}
	The set $\calL$ is bounded since $B = \{ x \in \calX : f(x) \leq f(x_0) \}$ is bounded, $\calL = \Phi(\varphi^{-1}(B))$ and $\Phi$ is a rebalancing map.
	It follows that
    the closure of $\calL$
    is
    a compact set (since $\calM$ is complete) which contains $\calL$.
\end{proof}
The above yields the main result of this subsection as a corollary.
\begin{corollary} \label{cor:optimresult}
	Assume $\calM$ is complete, $\Phi$ is a rebalancing map with respect to $\varphi$, the sublevel set $\{ x \in \calX : f(x) \leq f(x_0) \}$ is bounded
	and $f$ is three times continuously differentiable.
	Then sequences produced by Algorithm~\ref{algo:TRbalanced}
	with $y_0 \in \varphi^{-1}(x_0)$ 
	admit at least one accumulation point, and any such point is 2-critical for~\eqref{eq:Q}.
\end{corollary}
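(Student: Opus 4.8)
The plan is to assemble the corollary directly from the lemma immediately preceding it and Theorem~\ref{thm:optimthmupstairs}, since all the hard work has been offloaded to those two results. First I would invoke the lemma under its hypotheses — $\calM$ complete, $\Phi$ a rebalancing map with respect to $\varphi$, and $\{x \in \calX : f(x) \leq f(x_0)\}$ bounded — to conclude that the closure $\overline{\calL}$ of $\calL$ (as in~\eqref{eq:calL}) is compact. This is the step that consumes the boundedness and completeness assumptions, and it is exactly where Definition~\ref{def:goodPhi}(b) is used.

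Next I would check that the remaining hypotheses of Theorem~\ref{thm:optimthmupstairs} are in force. Condition~(a) of Definition~\ref{def:goodPhi} gives $\varphi \circ \Phi = \varphi$, which is precisely the hypothesis on the hook in that theorem. For the smoothness hypothesis, $f$ is three times continuously differentiable by assumption and $\varphi$ is smooth (it is a smooth lift), so $g = f \circ \varphi$ is three times continuously differentiable by composition. Thus Theorem~\ref{thm:optimthmupstairs} applies with $x_0 = \varphi(y_0)$: the sequence $(y_k)$ produced by Algorithm~\ref{algo:TRbalanced} remains in $\calL$, which is contained in the compact set $\overline{\calL}$, and therefore $(y_k)$ admits at least one accumulation point and every such accumulation point is 2-critical for~\eqref{eq:Q}.

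The only mild subtlety — and the closest thing to an obstacle — is a bookkeeping point: Theorem~\ref{thm:optimthmupstairs} is stated for ``the sequence generated by Algorithm~\ref{algo:TRbalanced}'' from an input $y_0$, while the corollary talks about ``sequences produced by Algorithm~\ref{algo:TRbalanced} with $y_0 \in \varphi^{-1}(x_0)$.'' One just needs to note that the algorithm first hooks $y_0 \leftarrow \Phi(y_0)$ in Step~3, so the choice of $y_0$ within the fiber $\varphi^{-1}(x_0)$ is immaterial to membership in $\calL = \Phi(\varphi^{-1}(\{f \leq f(x_0)\}))$, and that the algorithm may make nondeterministic choices (of $s_k$, of $\gamma_{k+1}$) so ``sequences'' is plural by design; the conclusion holds for every such sequence. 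With that observed, the proof is a two-line citation, so I would keep it short.

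\begin{proof}
    By the previous lemma, the hypotheses on $\calM$, $\Phi$ and the sublevel set of $f$ imply that the closure of $\calL$~\eqref{eq:calL} is compact.
    Since $f$ is three times continuously differentiable and $\varphi$ is smooth, $g = f \circ \varphi$ is three times continuously differentiable.
    Moreover, $\varphi \circ \Phi = \varphi$ holds because $\Phi$ is a rebalancing map (Definition~\ref{def:goodPhi}(a)).
    Thus the hypotheses of Theorem~\ref{thm:optimthmupstairs} are satisfied, with $x_0 = \varphi(y_0)$.
    Any sequence $(y_k)$ produced by Algorithm~\ref{algo:TRbalanced} with $y_0 \in \varphi^{-1}(x_0)$ therefore remains in $\calL$, hence in its compact closure.
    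By Theorem~\ref{thm:optimthmupstairs}, $(y_k)$ admits at least one accumulation point, and every accumulation point of $(y_k)$ is 2-critical for~\eqref{eq:Q}.
\end{proof}
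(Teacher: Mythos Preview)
Your proof is correct and follows exactly the route the paper intends: the paper does not even write out a proof for this corollary, simply stating that ``the above yields the main result of this subsection as a corollary,'' meaning it follows directly from the preceding lemma (compactness of $\overline{\calL}$) and Theorem~\ref{thm:optimthmupstairs}. Your assembly of these two ingredients, together with the observation that $g = f \circ \varphi$ is $C^3$ and $\varphi \circ \Phi = \varphi$, is precisely what is needed.
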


Sometimes, we do not need rebalancing and can set $\Phi(y) = y$.
This is the case exactly if $\varphi$ is \emph{proper}, that is, if pre-images of compact sets are compact.
For example, this holds when $\calM$ itself is compact.
The following lemma captures that observation.
Intuitively, properness helps because if $f$ has compact sublevel sets then so does $g$.
\begin{proposition} \label{prop:propervarphigoodPhi}
	Assume $\calM$ is complete.
	The identity map $\Phi(y) = y$ is a rebalancing map for $\varphi \colon \calM \to \calE$ if and only if $\varphi$ is proper.
\end{proposition}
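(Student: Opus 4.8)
The plan is to prove the two directions of the equivalence separately, unpacking the definitions of \emph{proper} map and \emph{rebalancing map} (Definition~\ref{def:goodPhi}) when $\Phi$ is the identity. With $\Phi = \id$, condition (a) of Definition~\ref{def:goodPhi} holds trivially, so the only content is condition (b): $\varphi^{-1}(B)$ is bounded for every bounded $B \subseteq \calX$. So the claim reduces to: \emph{$\varphi$ is proper if and only if $\varphi^{-1}(B)$ is bounded for every bounded $B \subseteq \calX$}, under the hypothesis that $\calM$ is complete.

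For the direction ``$\varphi$ proper $\implies$ identity is rebalancing,'' I would take a bounded set $B \subseteq \calX$. Its closure $\bar B$ in $\calE$ is closed and bounded, hence compact (as $\calE$ is finite-dimensional Euclidean). Since $\varphi$ is continuous and proper, $\varphi^{-1}(\bar B)$ is compact, hence bounded; and $\varphi^{-1}(B) \subseteq \varphi^{-1}(\bar B)$ is therefore bounded. (One subtlety worth a line: properness is about preimages of compact subsets of the codomain $\calE$, but $\varphi$ maps into $\calX = \varphi(\calM)$; I would note $\varphi^{-1}(\bar B) = \varphi^{-1}(\bar B \cap \calX)$ and that $\bar B \cap \calX$ is compact if $\calX$ is closed, or else argue directly that $\varphi^{-1}$ of any closed bounded subset of $\calE$ is compact — in fact the cleanest route is: $\varphi^{-1}(\bar B)$ is closed in $\calM$ by continuity, contained in a set whose image lies in the compact $\bar B$, so if $\varphi$ is proper we can intersect with an exhausting compact set; I'll keep the writeup to the standard ``continuous + proper $\Rightarrow$ preimage of compact is compact'' fact.)

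For the converse, ``identity is rebalancing $\implies$ $\varphi$ proper,'' let $C \subseteq \calE$ be compact; I must show $\varphi^{-1}(C)$ is compact in $\calM$. First, $\varphi^{-1}(C)$ is closed in $\calM$ since $\varphi$ is continuous and $C$ is closed. Second, $C$ is bounded, so $B \coloneqq C \cap \calX$ is a bounded subset of $\calX$ with $\varphi^{-1}(C) = \varphi^{-1}(B)$, and by hypothesis (b) this set is bounded in $\calM$ with respect to the Riemannian distance. Now I invoke completeness of $\calM$: by Hopf--Rinow, a complete Riemannian manifold has the property that closed and bounded subsets are compact. Hence $\varphi^{-1}(C)$, being closed and bounded, is compact. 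This shows $\varphi$ is proper.

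The main obstacle — really the only non-cosmetic point — is the use of completeness, which enters exactly once, in the converse direction, via Hopf--Rinow to upgrade ``closed and bounded'' to ``compact''; I would state this explicitly as the place where the hypothesis $\calM$ complete is used. A secondary care point is matching the two slightly different notions of ``preimage'': properness quantifies over compact subsets of $\calE$, while Definition~\ref{def:goodPhi}(b) quantifies over bounded subsets of $\calX$ — but since (i) closed bounded subsets of $\calE$ are compact and (ii) any compact $C \subseteq \calE$ satisfies $\varphi^{-1}(C) = \varphi^{-1}(C \cap \calX)$ with $C \cap \calX$ bounded in $\calX$, the two quantifications are interchangeable here, and I would spell this bridge out in one sentence each direction.
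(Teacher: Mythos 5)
Your proof is correct and follows essentially the same route as the paper's: the forward direction bounds $\varphi^{-1}(B)$ by enlarging $B$ to a compact set whose preimage is compact, and the converse combines closedness (continuity), boundedness (rebalancing property), and completeness of $\calM$ via Hopf--Rinow to upgrade to compactness. The paper is slightly terser (it quantifies properness over compact subsets of $\calX$ rather than $\calE$, tacitly relying on $\calX$ being closed, and cites completeness without naming Hopf--Rinow), but the substance is identical and your extra care about the $\calE$ vs.\ $\calX$ bridge is a reasonable elaboration rather than a different argument.
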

\begin{proof}
	Clearly, $\varphi \circ \Phi = \varphi$.
	Assume $\Phi$ is a rebalancing map.
	Let $K$ be an arbitrary compact set in $\calX$.
	In particular, $K$ is closed and bounded.
	It follows that $\varphi^{-1}(K)$ is closed (by continuity of $\varphi$) and bounded (because $\Phi$ is a rebalancing map).
	Since $\calM$ is complete, we have that $\varphi^{-1}(K)$ is compact.
	We conclude that $\varphi$ is proper.
	Conversely, assume $\varphi$ is proper.
	Let $B$ be an arbitrary bounded set in $\calX$.
	There exists a compact set $K \subseteq \calX$ which contains $B$. 
	Therefore, $\varphi^{-1}(B) \subseteq \varphi^{-1}(K)$, and $\varphi^{-1}(K)$ is compact since $\varphi$ is proper.
	It follows that $\varphi^{-1}(B) = \Phi(\varphi^{-1}(B))$ is bounded.
	We conclude that $\Phi$ is a rebalancing map.
\end{proof}

We close this subsection with a philosophical point.
Algorithm~\ref{algo:TRbalanced} generates a sequence $(y_k)$ on the manifold $\calM$ and a sequence of scalars $(\gamma_k)$ which control the trust-region radius.
Formally, the algorithm (together with a memory-less subproblem solver) implements a map
\begin{align*}
	G \colon \calM \times \reals \to \calM \times \reals
\end{align*}
such that $(y_{k+1}, \gamma_{k+1}) = G(y_k, \gamma_k)$.
Through the lift $\varphi$, the sequence $(y_k)$ on $\calM$ defines a sequence $(x_k)$ on $\calX$ via $x_k = \varphi(y_k)$.
One may ask: is there an algorithm operating on $\calX$ directly which generates $(x_k)$?
More formally: is there a map
\begin{align*}
	F \colon \calX \times \reals \to \calX \times \reals
\end{align*}
such that $(x_{k+1}, \gamma_{k+1}) = F(x_k, \gamma_k)$?
The answer is yes if the rebalancing map $\Phi$ maps fibers to singletons, that is, if $\Phi(\varphi^{-1}(x))$ is a single point $y$ of $\calM$ such that $\varphi(y) = x$.
Indeed, we then have
\begin{align*}
	F(x, \gamma) & = (\varphi(\tilde y), \tilde \gamma) & \textrm{ with } && (\tilde y, \tilde \gamma) = G(\Phi(\varphi^{-1}(x)), \gamma).
\end{align*}
Because of this interesting property, we give a name to rebalancing maps which are constant on fibers.
\begin{definition}
    A \emph{sectional rebalancing map} for $\varphi \colon \calM \to \calE$ is a rebalancing map $\Phi$ with the additional property that $\Phi(\varphi^{-1}(x))$ is a singleton for all $x \in \calX$.
\end{definition}
In the next section, the rebalancing map we propose for the $LR^\top$ lift is sectional.
One could then ask: what kind of algorithm on $\calX$ does the map $F$ encode?
A closer look suggests that it is a type of second-order algorithm on the nonsmooth set $\calX$ which considers the Hessian of $f$ only along specific subspaces of the tangent cones.
Our algorithm uses this additional information to escape apocalypses.

\subsection{Examples of rebalancing maps, and proof of Theorem~\ref{thm:intro_informal_alg}}\label{sec:examples}

We construct rebalancing maps for all three lifts
in~\eqref{eq:low_rk_lifts}, where $\calX = \Rmnlr$
is a subset of $\calE = \Rmn$ with its usual inner product $\inner{U}{V} = \Tr(U^\top V)$.
We can then use Corollary~\ref{cor:optimresult} to prove Theorem~\ref{thm:intro_informal_alg} from Section~\ref{sec:intro}.

\begin{example} \label{ex:LR}
	Consider $\calM = \Rmr \times \Rnr$ with its usual Euclidean structure and $\varphi(L, R) = LR\transpose$.
	Note that $\calM$ is complete.
	Further let $\Phi(L, R) = (U\Sigma^{1/2}, V\Sigma^{1/2})$ where $U\Sigma V\transpose$ is an SVD of $\varphi(L, R)$ with $\Sigma$ of size $r \times r$ (if it is not unique, pick one deterministically.) 
	This is a sectional rebalancing map for $\varphi$.
	Indeed, $\varphi \circ \Phi = \varphi$ by design; $\Phi$ maps fibers of $\varphi$ to singletons; and the boundedness property is ensured by
    \begin{align*}
        \|\Phi(L, R)\|^2 & = \|U\Sigma^{1/2}\|^2 + \|V\Sigma^{1/2}\|^2 = 2\Tr(\Sigma) = 2\|\varphi(L, R)\|_* \leq 2\sqrt{r}\|\varphi(L, R)\|,
    \end{align*}
	where $\|\cdot\|_*$ denotes the nuclear norm on $\Rmn$.
\end{example}
Using the rebalancing map in Example~\ref{ex:LR} has an additional advantage. In general, a local minimum $(L, R)$ of $g$ on $\calM$ may map to a saddle point $\varphi(L, R)$ of $f$ on $\Rmnlr$~\cite[Prop.~2.30]{eitan_thesis}.
Fortunately, with the stated rebalancing map we are guaranteed that if Algorithm~\ref{algo:TRbalanced} converges to a local minimum $(L, R)$ of $g$ then $\varphi(L, R)$ is a local minimum of $f$.
This follows from~\cite[Lem.~2.33, Prop.~2.34]{eitan_thesis}: if $L\transpose L = R\transpose R$---which is enforced by the above rebalancing map at every iteration and is closed under limits---then $(L, R)$ is a local minimum of $g$ on $\calM$ if and only if $\varphi(L, R)$ is a local minimum for $f$ on $\Rmnlr$.

For the other two examples of lifts of $\Rmnlr$ from~\eqref{eq:low_rk_lifts}, we show that the lift is proper, and therefore no rebalancing is needed by Proposition~\ref{prop:propervarphigoodPhi}.
To this end, it is helpful to recall the following facts.
We say a sequence $(x_k)$ in a topological space \emph{escapes to infinity} if for each compact $K$ there exists $\bar k$ such that $x_k\notin K$ for all $k \geq \bar k$.
The spaces $\calM$ and $\calX$ are metric spaces ($\calM$ with its Riemannian distance, $\calX$ as a subset of $\calE$) and $\varphi$ is continuous.
Therefore, it holds that $\varphi$ is proper if and only if, whenever $(y_k)$ escapes to infinity in $\calM$, the corresponding sequence $(x_k)$ with $x_k = \varphi(y_k)$ escapes to infinity in $\calX$.

\begin{example}
	Consider $\calM = \St(m, r) \times \Rnr$ endowed with the Riemannian product structure, where $\St(m, r) = \{ U \in \Rmr : U^\top U = I_r \}$ has the usual Riemannian submanifold structure in $\Rmr$.
	Then, $\calM$ is complete because it is a product of complete manifolds.
	Let $\varphi(U, W) = UW\transpose$, which is clearly continuous.
	The identity $\Phi(U, W) = (U, W)$ is a rebalancing map for $\varphi$ by Proposition~\ref{prop:propervarphigoodPhi}.
	Indeed, to see that $\varphi$ is proper, suppose a sequence $((U_k, W_k))_{k\geq0}$ on $\calM$ escapes to infinity. Since $U_k$ lives on a compact set (the Stiefel manifold), $(U_k, W_k)$ escapes to infinity exactly when $(W_k)$ escapes to infinity in $\Rmr$.
	Since $\|X_k\| = \|U_k^{}W_k^\top\| = \|W_k\|$, this happens exactly when $(X_k)$ escapes to infinity.

	We could also consider the Riemannian quotient manifold $\calM \slash \Or$ through the action $((U, W), Q) \mapsto (UQ, WQ)$ of the orthogonal group $\Or$~\cite[\S9.7]{optimOnMans}.
	The quotient is complete because $\calM$ is complete; $\varphi$ is well defined on the quotient; and $\Phi=\id$ is still a rebalancing map because distances only decrease when passing to the quotient.
\end{example}

\begin{example}
	Let $\Gr(n, n-r)$ denote the manifold of subspaces of dimension $n-r$ in $\Rn$, with its usual Riemannian structure~\cite{bendokat2020grassmann}.
	Consider $\calM = \{ (X, K) \in \Rmn \times \Gr(n, n-r) : K \subseteq \ker(X) \}$ endowed with the Riemannian submanifold structure in $\Rmn \times \Gr(n, n-r)$, which itself has the usual product structure.
	Then, $\calM$ is complete because it is closed in, and a Riemannian submanifold of, a complete manifold.
	Let $\varphi(X, K) = X$.
	The identity $\Phi(X, K) = (X, K)$ is a rebalancing map for $\varphi$ by Proposition~\ref{prop:propervarphigoodPhi}.
	Indeed, $\varphi$ is proper since it is continuous and any sequence $((X_k, K_k))_{k\geq0}$ on $\calM$ which escapes to infinity has the property that the corresponding sequence $(X_k)_{k\geq0}$ (where $X_k = \varphi(X_k, K_k)$) escapes to infinity.
	That is because $K_k$ is an element of the Grassmann manifold which is compact.

\end{example}

Using any of the above rebalancing maps, we can now prove Theorem~\ref{thm:intro_informal_alg} from Section~\ref{sec:intro}.
\begin{proof}[Proof of Theorem~\ref{thm:intro_informal_alg}]
    Apply Algorithm~\ref{algo:TRbalanced} to problem~\eqref{eq:Q} with any of the three lifts in~\eqref{eq:low_rk_lifts}, using the corresponding rebalancing map $\Phi$ from the above examples. By Corollary~\ref{cor:optimresult} and the continuity of $\varphi$, the sequence $(Y_k)_{k\geq 1}\in\calM$ generated by Algorithm~\ref{algo:TRbalanced} has at least one accumulation point $Y_*$, and any such point is 2-critical for~\eqref{eq:Q}. By~\cite[Thm.~1]{ha2020criticallowrank} and~\cite[\S2.3]{eitan_thesis}, the image of a 2-critical point $X_*=\varphi(Y_*)\in\Rmnlr$ is stationary for~\eqref{eq:P}. By continuity of $\varphi$, the matrix $X_*$ is an accumulation point of the sequence $X_k=\varphi(Y_k)$, which satisfies the conclusion of Theorem~\ref{thm:intro_informal_alg}.
\end{proof}


We close this subsection with a consideration of low-rank tensors.

\begin{example}\label{ex:tensors}
A natural family of lifts and sectional rebalancing maps are afforded by tensor networks with underlying tree structure~\cite[Ch.~11--12]{hackbusch2012tensor}. 
Here $\varphi$ is a certain multilinear map specified by a graph, the domain of $\varphi$ is a Cartesian product of tensor products of Euclidean spaces (one tensor product per node in the graph), and the codomain is a (generally) larger tensor product of Euclidean spaces. 
In the case that the underlying graph is a tree, the image of $\varphi$ is a closed subset in the Euclidean topology, a real algebraic variety but typically with singularities.
A sectional rebalancing map is then given by the sequential matrix singular value decomposition algorithm described in~\cite[\S12.4.1]{hackbusch2012tensor}.  
In particular, this family of examples contains two well-known low-rank tensor representations with corresponding normalization algorithms: the Tensor Tucker decomposition~\cite{de2000multilinear} with the Higher-Order Singular Value Decomposition (HOSVD), and the Tensor Train Decomposition~\cite{oseledets2011tensor} with the Tensor Train Singular Value Decomposition (TT-SVD).
We work out the details for these two examples in Appendix~\ref{apdx:tensors}, and show there that the HOSVD and TT-SVD are indeed sectional rebalancing maps. 
\begin{journal}
We work out the details for these two examples in~\cite[App.~D]{levin2021finding}, and show there that the HOSVD and TT-SVD are indeed sectional rebalancing maps. 
\end{journal}
\begin{tbd}
We work out the details for these two examples in Appendix~\ref{apdx:tensors}, and show there that the HOSVD and TT-SVD are indeed sectional rebalancing maps. 
\end{tbd}
While Corollary~\ref{cor:optimresult} applies to these lifts, unfortunately we are not able to prove an analog of Theorem \ref{thm:intro_informal_alg} here, because not all 2-critical points for the lifted optimization problem map to stationary points on the tensor variety.
Thus, provably finding stationary points on bounded Tucker, or Tensor Train, rank varieties remains an open problem.
\end{example}

\subsection{From approximate 2-criticality to approximate stationarity}
\label{subsec:approxstationarity}


We define the usual notion of approximate 2-critical point for $g \colon \calM \to \reals$ by relaxing Definition~\ref{def:2criticalM} using the Riemannian structure.
\begin{definition} \label{def:2criticalMapproximate}
    A point $y \in \calM$ is \emph{$(\epsilon_1, \epsilon_2)$-approximate 2-critical} for $g \colon \calM \to \reals$ with $\epsilon_1, \epsilon_2 \geq 0$ if
    \begin{align*}
        \|\nabla g(y)\| & \leq \epsilon_1 & \textrm{ and } && \lambdamin(\nabla^2 g(y)) & \geq -\epsilon_2.
    \end{align*}
\end{definition}
Theorem~\ref{thm:optimtheorem1} bounds the number of iterations Algorithm~\ref{algo:TRbalanced} may need to compute an $(\epsilon_1, \epsilon_2)$-approximate 2-critical point on the manifold.
It also shows that any accumulation point of the sequence generated by the algorithm is an exact 2-critical point, or equivalently, a $(0,0)$-approximate 2-critical point.
We know those map to stationary points on $\Rmnlr$ for the lifts in~\eqref{eq:low_rk_lifts}.
It is natural to ask whether approximate 2-critical points map to approximate stationary points. 

We provide guarantees for the $(L, R)$ lift.
(Analogous arguments for the other two lifts in~\eqref{eq:low_rk_lifts} give similar results.)
Merely bounding $\|\Proj_{\T_X\Rmnlr}(-\nabla f(X))\|$, the stationarity measure in Definition~\ref{def:criticalPapprox}, may not be informative if $\sigma_r(X)$ is small
since all lesser rank matrices are apocalyptic. There may not be any stationary point nearby even if that measure is small.
To resolve this, we also bound the entire Euclidean gradient norm $\|\nabla f(X)\|$. 

Theorem~\ref{thm:stable_2implies1_LR} below generalizes a result by Ha, Liu and Barber~\cite{ha2020criticallowrank} to $\epsilon_1, \epsilon_2$ nonzero. 
Our proof is a direct modification of theirs.
The rebalancing map $\Phi$ we use for the $(L, R)$ lift (Example~\ref{ex:LR}) produces iterates $(L, R)$ satisfying $L^\top L = R^\top R$.
We call such iterates \emph{balanced} and use this property below.
The bounds below are sharp up to constants, which we show with a simple example in Appendix~\ref{apdx:stable_LR_sharp}.
\begin{journal} 
The bounds below are sharp up to constants, which we show with a simple example in~\cite[App.~E]{levin2021finding}.
\end{journal}
\begin{tbd}
The bounds below are sharp up to constants, which we show with a simple example in Appendix~\ref{apdx:stable_LR_sharp}.
\end{tbd}
\begin{theorem}\label{thm:stable_2implies1_LR}
    Suppose $f\colon\Rmn\to\RR$ has $L_f$-Lipschitz continuous gradient. Suppose $(L,R)$ is an $(\epsilon_1,\epsilon_2)$-approximate 2-critical point for $g(L,R) = f(LR^\top )$ satisfying $L^\top L = R^\top R$, and let $X = LR^\top$ be the corresponding point in $\Rmnlr$.
    Then,
    \begin{align}
        \|\nabla f(X)\|_{\mathrm{op}} \leq \epsilon_2 + 2L_f\sigma_r(X),
        \label{eq:opnormnablafXbound}
    \end{align}
    where $\|\cdot\|_{\mathrm{op}}$ denotes the largest singular value of a matrix.
    Moreover,
    \begin{align}
        \|\Proj_{\T_X\Rmnlr}(-\nabla f(X))\| \leq \min\left\{\sqrt{\frac{2}{\sigma_r(X)}} \epsilon_1,\ \sqrt{\rank(\nabla f(X))}(\epsilon_2 + 2L_f\sigma_r(X))\right\},
        \label{eq:normProjTXnegnablabound}
    \end{align}
    where $\epsilon_1/\sqrt{\sigma_r(X)}$ is interpreted as $+\infty$  if $\rank(X) < r$ (even if $\epsilon_1 = 0$).
\end{theorem}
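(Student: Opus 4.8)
The plan is to follow Ha, Liu and Barber~\cite{ha2020criticallowrank} and carry the slack $\epsilon_1,\epsilon_2$ through their estimates. First I would fix an SVD $X = U\Sigma V^\top$ with $\Sigma = \diag(\sigma_1,\dots,\sigma_r)$, $\sigma_1\ge\cdots\ge\sigma_r\ge 0$ (so $\sigma_r = \sigma_r(X)$, vanishing exactly when $\rank(X)<r$). Using the balancing hypothesis $L^\top L = R^\top R$ one may assume, without loss of generality, $L = U\Sigma^{1/2}$ and $R = V\Sigma^{1/2}$: indeed $L^\top L = R^\top R$ forces $L = U\Sigma^{1/2}Q$, $R = V\Sigma^{1/2}Q$ for a common $Q\in\Or$, and replacing $(\dot L,\dot R)$ by $(\dot L Q,\dot R Q)$ changes none of the quantities below. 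Next I would record, for $\varphi(L,R) = LR^\top$ and $\dot X := \dot L R^\top + L\dot R^\top$, the standard formulas $\nabla g(L,R) = (\nabla f(X)R,\ \nabla f(X)^\top L)$ and $\nabla^2 g(L,R)[(\dot L,\dot R),(\dot L,\dot R)] = \langle\nabla^2 f(X)[\dot X],\dot X\rangle + 2\langle\nabla f(X),\dot L\dot R^\top\rangle$. Thus $(\epsilon_1,\epsilon_2)$-approximate $2$-criticality amounts precisely to $\|\nabla f(X)R\|^2 + \|\nabla f(X)^\top L\|^2 \le \epsilon_1^2$ together with, for all $(\dot L,\dot R)$,
\[
\langle\nabla^2 f(X)[\dot X],\dot X\rangle + 2\langle\nabla f(X),\dot L\dot R^\top\rangle \ \ge\ -\epsilon_2\big(\|\dot L\|^2+\|\dot R\|^2\big);
\]
and $L_f$-Lipschitzness of $\nabla f$ gives $\|\nabla^2 f(X)\|_{\mathrm{op}} \le L_f$.

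To prove~\eqref{eq:opnormnablafXbound}, assume $\nabla f(X)\neq 0$ (else trivial) and pick unit vectors $a\in\RR^m$, $b\in\RR^n$ achieving $a^\top\nabla f(X)b = \|\nabla f(X)\|_{\mathrm{op}}$. With $e_r$ the last standard basis vector of $\RR^r$, use the test direction $\dot L = \tfrac{1}{\sqrt 2}\,a e_r^\top$, $\dot R = -\tfrac{1}{\sqrt 2}\,b e_r^\top$. Then $\|\dot L\|^2 + \|\dot R\|^2 = 1$, $\dot L\dot R^\top = -\tfrac12 ab^\top$, and since $Le_r = \sqrt{\sigma_r}\,u_r$, $Re_r = \sqrt{\sigma_r}\,v_r$ (last columns of $U$, $V$) we get $\dot X = \tfrac{\sqrt{\sigma_r}}{\sqrt2}(a v_r^\top - u_r b^\top)$, hence $\|\dot X\|^2 \le 2\sigma_r(X)$. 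Substituting into the Hessian inequality yields $2L_f\sigma_r(X) - \|\nabla f(X)\|_{\mathrm{op}} \ge -\epsilon_2$, which is~\eqref{eq:opnormnablafXbound}. The second term in the minimum of~\eqref{eq:normProjTXnegnablabound} is then immediate: projection onto the closed cone $\T_X\Rmnlr\ni 0$ does not increase norm (Moreau decomposition; Proposition~\ref{prop:normprojtocone}), so $\|\Proj_{\T_X\Rmnlr}(-\nabla f(X))\| \le \|\nabla f(X)\|_F \le \sqrt{\rank(\nabla f(X))}\,\|\nabla f(X)\|_{\mathrm{op}}$.

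For the first term in the minimum of~\eqref{eq:normProjTXnegnablabound} I would combine the first-order condition with the tangent-cone geometry at $\rank(X)=r$ (if $\rank(X)<r$ the bound is $+\infty$, nothing to prove). Let $M = -\nabla f(X)$ and $P_U = UU^\top$, $P_V = VV^\top$, $P_U^\perp = I-P_U$, $P_V^\perp = I-P_V$. By~\eqref{eq:TXRmnlr}--\eqref{eq:proj_Tx} with $s=r$, orthogonal projection onto $\T_X\Rmnlr$ kills exactly the $P_U^\perp(\cdot)P_V^\perp$ block, so by orthogonality of the four blocks
\[
\|\Proj_{\T_X\Rmnlr}(M)\|^2 = \|P_U M P_V\|^2 + \|P_U M P_V^\perp\|^2 + \|P_U^\perp M P_V\|^2 \le \|P_U M\|^2 + \|M P_V\|^2.
\]
Since $R = V\Sigma^{1/2}$ we have $\nabla f(X)V = \nabla f(X)R\,\Sigma^{-1/2}$, so $\|M P_V\| = \|\nabla f(X)V\| \le \|\nabla f(X)R\|/\sqrt{\sigma_r(X)}$, and similarly $\|P_U M\| = \|\nabla f(X)^\top U\| \le \|\nabla f(X)^\top L\|/\sqrt{\sigma_r(X)}$. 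Plugging in $\|\nabla f(X)R\|^2 + \|\nabla f(X)^\top L\|^2 \le \epsilon_1^2$ gives $\|\Proj_{\T_X\Rmnlr}(-\nabla f(X))\|^2 \le \epsilon_1^2/\sigma_r(X) \le 2\epsilon_1^2/\sigma_r(X)$, completing the proof.

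The routine parts are the two derivative formulas and the block bookkeeping. The one delicate step — the crux of the Ha--Liu--Barber argument — is the choice of test direction for the Hessian inequality: it must be engineered so that $\dot L\dot R^\top$ is proportional to the worst rank-one direction of $\nabla f(X)$ while $\|\dot X\|$ is of order $\sqrt{\sigma_r(X)}$ rather than order $1$, which is exactly what puts $\sigma_r(X)$, and not its reciprocal, into~\eqref{eq:opnormnablafXbound}. I would also be careful about the mild smoothness needed to make sense of $\nabla^2 g$ and to extract $\|\nabla^2 f\|_{\mathrm{op}} \le L_f$ from the Lipschitz-gradient hypothesis.
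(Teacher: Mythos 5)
Your proof is correct and follows essentially the same route as the paper: carrying the $(\epsilon_1,\epsilon_2)$ slack through the Ha--Liu--Barber argument, with the same key test direction (rank-one $\dot L\dot R^\top$ aligned with the top singular direction of $\nabla f(X)$, scaled so that $\|\dot X\|^2 = O(\sigma_r(X))$) for the Hessian inequality. One small difference worth noting: for the $\epsilon_1$-term you bound $\|\Proj_{\T_X\Rmnlr}(-\nabla f(X))\|^2$ by the orthogonal-block identity $\le\|P_U M\|^2+\|MP_V\|^2$, which actually yields the sharper constant $\epsilon_1/\sqrt{\sigma_r(X)}$, whereas the paper uses a triangle inequality followed by Cauchy--Schwarz and lands on $\sqrt{2}\,\epsilon_1/\sqrt{\sigma_r(X)}$ as stated; both of course establish the theorem's inequality.
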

\begin{journal}
The proof of Theorem~\ref{thm:stable_2implies1_LR} is a straightforward modification of the proof of~\cite[Thm.~1(a)]{ha2020criticallowrank}. We give the full proof in~\cite[Thm.~3.16]{levin2021finding}.
\end{journal}
\begin{tbd}
Our proof is a direct modification of theirs.
\end{tbd}
\begin{proof}
All balanced factorizations $(L,R)$ of $X=LR^\top$ are of the form $L=U\Sigma^{1/2}Q$ and $R=V\Sigma^{1/2}Q^\top$ where $X=U\Sigma V^\top$ is a thin SVD of $X$ and $Q\in O(r)$ is arbitrary~\cite[Lem.~2.33]{eitan_thesis}. 
Assume first that $\sigma_r(X) > 0$.
Since $(L, R)$ is balanced, we have $\|L^{\dagger}\|_{\mathrm{op}} = \|R^{\dagger}\|_{\mathrm{op}} = \sigma_r(X)^{-1/2}$.
Because $(L, R)$ is an $(\epsilon_1, \epsilon_2)$-approximate 2-critical point for $g$, it holds in particular that
\begin{align*}
   \sqrt{\|L^\top\nabla f(X)\|^2 + \|\nabla f(X)R\|^2} = \|\nabla g(L,R)\| \leq \epsilon_1.
\end{align*}
Since $\sigma_r(X)>0$, we have $\rank(X)=\rank(L)=\rank(R)=r$, so $\mathrm{col}(X)=\mathrm{col}(L)$ and $\mathrm{row}(X)=\mathrm{col}(R)$. Therefore,
\begin{align*}
    &\textrm{if } \nabla f(X) = U\begin{bmatrix} M_1 & M_2\\ M_3 & M_4\end{bmatrix}V^\top,\textrm{ then } LL^{\dagger}\nabla f(X) = \Proj_{\mathrm{col}(X)}\nabla f(X) = U\begin{bmatrix} M_1 & M_2 \\ 0 & 0\end{bmatrix}V^\top,\\ &\nabla f(X)RR^{\dagger} = \nabla f(X)\Proj_{\mathrm{row}(X)} = U\begin{bmatrix} M_1 & 0\\ M_3 & 0\end{bmatrix}V^\top, \textrm{ and } LL^{\dagger}\nabla f(X)RR^{\dagger} = U\begin{bmatrix} M_1 & 0\\ 0 & 0\end{bmatrix}V^\top.
\end{align*}
By~\eqref{eq:proj_Tx} we get:
\begin{align*}
    \|\Proj_{\T_X\Rmnlr}(-\nabla f(X))\| & = \|LL^\dagger\nabla f(X) + \nabla f(X)RR^{\dagger} - LL^\dagger\nabla f(X)RR^{\dagger}\| \\
            & \leq \|(L^{\dagger})^\top L^\top \nabla f(X) + \nabla f(X)RR^{\dagger}\| \\
            & \leq \sigma_r(X)^{-1/2}(\|L^\top\nabla f(X)\|+\|\nabla f(X)R\|) 
            \leq \sqrt{\frac{2}{\sigma_r(X)}} \epsilon_1,
\end{align*}
giving us one of the claimed bounds.

Next, assume only $\rank(X) \leq r$.
Since $(L, R)$ is balanced we can pick $w \in \RR^r$ such that $\|Lw\| = \|Rw\| = \sigma_r(X)^{1/2}$ and $\|w\| = 1$.
Let $u_1, v_1$ be top left and right singular vectors of $\nabla f(X)$, so $\|u_1\| = \|v_1\| = 1$ and $\nabla f(X)v_1 = \|\nabla f(X)\|_{\mathrm{op}} u_1$.
Set $\dot L = u_1w^\top$ and $\dot R = -v_1w^\top$.
Since $f$ has an $L_f$-Lipschitz continuous gradient, the Hessian $\nabla^2 f(X)$ has operator norm bounded by $L_f$.
Thus,
\begin{align*}
    \langle\nabla^2f(X)[\dot LR^\top\! +L\dot R^\top],\dot LR^\top\! + L\dot R^\top \rangle & \leq L_f\|\dot LR^\top\! +L\dot R^\top\|^2 \\
                   & \leq L_f\Big(\|u_1(Rw)^\top\| + \|Lwv_1^\top\|\Big)^2 
                   \leq 4L_f\sigma_r(X).
\end{align*}
We also have
\begin{align*}
    \langle\nabla f(X),\dot L\dot R^\top \rangle = -\|\nabla f(X)\|_{\mathrm{op}},
\end{align*}
and $\|\dot L\|=\|\dot R\|=1$.
By approximate 2-criticality and the above estimates, we get
\begin{align*}
    -2\epsilon_2 = -\epsilon_2(\|\dot L\|^2\!+\!\|\dot R\|^2) &\leq \langle \nabla^2g(L,R)[\dot L,\dot R],[\dot L,\dot R]\rangle\\
    &= \langle\nabla^2f(X)[\dot LR^\top\! + L\dot R^\top],\dot LR^\top\! + L\dot R^\top \rangle + 2\langle\nabla f(X),\dot L\dot R^\top \rangle\\
    &\leq 4L_f\sigma_r(X)-2\|\nabla f(X)\|_{\mathrm{op}},
\end{align*}
or equivalently,
\begin{align}
    \|\nabla f(X)\|_{\mathrm{op}} \leq \epsilon_2 + 2L_f\sigma_r(X).
\end{align}
This is the first claimed bound. 
Finally, $\|\Proj_{\T_X\Rmnlr}(-\nabla f(X))\|\leq \|\nabla f(X)\|\leq\sqrt{\rank(\nabla f(X))}\|\nabla f(X)\|_{\mathrm{op}}$ where the first inequality holds by Proposition~\ref{prop:normprojtocone}. 
\end{proof}
\begin{tbd}
\begin{proof}
All balanced factorizations $(L,R)$ of $X=LR^\top$ are of the form $L=U\Sigma^{1/2}Q$ and $R=V\Sigma^{1/2}Q^\top$ where $X=U\Sigma V^\top$ is a thin SVD of $X$ and $Q\in O(r)$ is arbitrary~\cite[Lem.~2.33]{eitan_thesis}. 
Assume first that $\sigma_r(X) > 0$.
Since $(L, R)$ is balanced, we have $\|L^{\dagger}\|_{\mathrm{op}} = \|R^{\dagger}\|_{\mathrm{op}} = \sigma_r(X)^{-1/2}$.
Because $(L, R)$ is an $(\epsilon_1, \epsilon_2)$-approximate 2-critical point for $g$, it holds in particular that
\begin{align*}
   \sqrt{\|L^\top\nabla f(X)\|^2 + \|\nabla f(X)R\|^2} = \|\nabla g(L,R)\| \leq \epsilon_1.
\end{align*}
Since $\sigma_r(X)>0$, we have $\rank(X)=\rank(L)=\rank(R)=r$, so $\mathrm{col}(X)=\mathrm{col}(L)$ and $\mathrm{row}(X)=\mathrm{col}(R)$. Therefore,
\begin{align*}
    &\textrm{if } \nabla f(X) = U\begin{bmatrix} M_1 & M_2\\ M_3 & M_4\end{bmatrix}V^\top,\textrm{ then } LL^{\dagger}\nabla f(X) = \Proj_{\mathrm{col}(X)}\nabla f(X) = U\begin{bmatrix} M_1 & M_2 \\ 0 & 0\end{bmatrix}V^\top,\\ &\nabla f(X)RR^{\dagger} = \nabla f(X)\Proj_{\mathrm{row}(X)} = U\begin{bmatrix} M_1 & 0\\ M_3 & 0\end{bmatrix}V^\top, \textrm{ and } LL^{\dagger}\nabla f(X)RR^{\dagger} = U\begin{bmatrix} M_1 & 0\\ 0 & 0\end{bmatrix}V^\top.
\end{align*}
By~\eqref{eq:proj_Tx} we get:
\begin{align*}
    \|\Proj_{\T_X\Rmnlr}(-\nabla f(X))\| & = \|LL^\dagger\nabla f(X) + \nabla f(X)RR^{\dagger} - LL^\dagger\nabla f(X)RR^{\dagger}\| \\
            & \leq \|(L^{\dagger})^\top L^\top \nabla f(X) + \nabla f(X)RR^{\dagger}\| \\
            & \leq \sigma_r(X)^{-1/2}(\|L^\top\nabla f(X)\|+\|\nabla f(X)R\|) 
            \leq \sqrt{\frac{2}{\sigma_r(X)}} \epsilon_1,
\end{align*}
giving us one of the claimed bounds.

Next, assume only $\rank(X) \leq r$.
Since $(L, R)$ is balanced we can pick $w \in \RR^r$ such that $\|Lw\| = \|Rw\| = \sigma_r(X)^{1/2}$ and $\|w\| = 1$.
Let $u_1, v_1$ be top left and right singular vectors of $\nabla f(X)$, so $\|u_1\| = \|v_1\| = 1$ and $\nabla f(X)v_1 = \|\nabla f(X)\|_{\mathrm{op}} u_1$.
Set $\dot L = u_1w^\top$ and $\dot R = -v_1w^\top$.
Since $f$ has an $L_f$-Lipschitz continuous gradient, the Hessian $\nabla^2 f(X)$ has operator norm bounded by $L_f$.
Thus,
\begin{align*}
    \langle\nabla^2f(X)[\dot LR^\top\! +L\dot R^\top],\dot LR^\top\! + L\dot R^\top \rangle & \leq L_f\|\dot LR^\top\! +L\dot R^\top\|^2 \\
                   & \leq L_f\Big(\|u_1(Rw)^\top\| + \|Lwv_1^\top\|\Big)^2 
                   \leq 4L_f\sigma_r(X).
\end{align*}
We also have
\begin{align*}
    \langle\nabla f(X),\dot L\dot R^\top \rangle = -\|\nabla f(X)\|_{\mathrm{op}},
\end{align*}
and $\|\dot L\|=\|\dot R\|=1$.
By approximate 2-criticality and the above estimates, we get
\begin{align*}
    -2\epsilon_2 = -\epsilon_2(\|\dot L\|^2\!+\!\|\dot R\|^2) &\leq \langle \nabla^2g(L,R)[\dot L,\dot R],[\dot L,\dot R]\rangle\\
    &= \langle\nabla^2f(X)[\dot LR^\top\! + L\dot R^\top],\dot LR^\top\! + L\dot R^\top \rangle + 2\langle\nabla f(X),\dot L\dot R^\top \rangle\\
    &\leq 4L_f\sigma_r(X)-2\|\nabla f(X)\|_{\mathrm{op}},
\end{align*}
or equivalently,
\begin{align}
    \|\nabla f(X)\|_{\mathrm{op}} \leq \epsilon_2 + 2L_f\sigma_r(X).
\end{align}
This is the first claimed bound. 
Finally, $\|\Proj_{\T_X\Rmnlr}(-\nabla f(X))\|\leq \|\nabla f(X)\|\leq\sqrt{\rank(\nabla f(X))}\|\nabla f(X)\|_{\mathrm{op}}$ where the first inequality holds by Proposition~\ref{prop:normprojtocone}. 
\end{proof}
\end{tbd}

We can interpret Theorem~\ref{thm:stable_2implies1_LR} within the context of apocalypses.
Consider a balanced sequence $(L_k, R_k)_{k\geq1}$ and the corresponding sequence $(X_k)_{k\geq1}$ with $X_k = L_k^{}R_k^\top$. Suppose that $X_k \to X$ and $(X, (X_k)_{k\geq 1}, f)$ is an apocalypse on $\Rmnlr$ for some $f\colon\Rmn\to\reals$ with $L_f$-Lipschitz continuous gradient.
In particular, $\sigma_r(X_k) \to 0$ while $\|\nabla f(X_k)\| \to \|\nabla f(X)\| \neq 0$.
Inequality~\eqref{eq:opnormnablafXbound} in Theorem~\ref{thm:stable_2implies1_LR} then provides
\begin{align}
    \max(0, -\lambda_{\min}(\nabla^2 g(L_k, R_k))) \geq \|\nabla f(X_k)\|_\mathrm{op} - 2L_f\sigma_r(X_k),
\end{align}
whose right-hand side is (strictly) positive for all sufficiently large $k$.
This quantifies how second-order information on the lift enables us to escape apocalypses on $\Rmnlr$. For the apocalypse $(\bar{X}, (X_k)_{k\geq 1}, f)$ constructed in Section~\ref{subsec:apocalypse_on_mats}, the only balanced factorization for $\bar{X} = 0$ is $(L, R) = (0, 0)$, and $\lambda_{\min}(\nabla^2g(0,0)) = -1 = -\|\nabla f(\bar{X})\|_{\mathrm{op}}$.

\section{Conclusions and future work}

We defined the notion of apocalypses on nonsmooth constraint sets.
We constructed an explicit apocalypse on the variety of bounded rank matrices $\Rmnlr$ that is followed by the {\DPGD} algorithm of~\cite{schneider2015convergence}.
We characterized apocalypses in terms of limits of tangent cones, from which we concluded that Clarke-regular sets do not have apocalypses.
Focusing on the problem of computing stationary points on $\Rmnlr$, we gave an algorithm running on a lift of $\Rmnlr$ which uses second-order information about the cost function as well as a rebalancing map.
The iterates generated by the algorithm are guaranteed to have a 2-critical accumulation point if the cost function is three times continuously differentiable and has compact sublevel sets on $\Rmnlr$.
If the rebalancing map is sectional (e.g., Examples~\ref{ex:LR} and~\ref{ex:tensors}), we can view our algorithm as running on the original variety $\Rmnlr$ rather than on its lift.
We also quantified how second-order information allows us to escape apocalypses, by showing that the Hessian of the lifted cost function has a sufficiently negative eigenvalue near preimages of apocalypses.

This work suggests several directions for future investigation: 
\begin{enumerate}
    \item \textbf{Stability of apocalypses:} In our example of an apocalypse, the set of initializations from which the {\DPGD} algorithm follows that apocalypse has measure zero. Is this always true? 
    
    A partial such result was shown in~\cite{hou2021asymptotic} for cost functions of the form $f(X) = \tfrac{1}{2}\|X-X_0\|^2$ restricted to positive semidefinite matrices of bounded rank.
    
    \item \textbf{Effect of apocalypses on algorithms:} Can apocalypses slow down local algorithms in their neighborhood, and if so, can randomization help? This is the case for saddle points~\cite{NIPS2017_f79921bb}.
    
    \item \textbf{Continuity of measures of approximate stationarity:} The definition of apocalypses depends on our notion of approximate stationarity, and their existence is closely related to the discontinuity of our measure of approximate stationarity, namely the function $x\mapsto \|\Proj_{\T_x\calX}(-\nabla f(x))\|$. Is it possible to construct \emph{tractable} and \emph{continuous} measures of approximate stationarity on nonsmooth sets, perhaps without relying on projections to tangent cones? There can be no apocalypses with respect to such a measure. We emphasize tractability here since the distance to the set of stationary points of $f$ is an intractable example of a continuous stationarity measure.
    
    \item \textbf{First-order algorithms on non-smooth sets:} In this paper we computed stationary points on $\Rmnlr$ by using second-order information about the cost function on a lift. The ingredients we used to achieve this, namely, Algorithm~\ref{algo:TRbalanced}, rebalancing maps, and smooth lifts, extend to optimization problems~\eqref{eq:P} over more general constraint sets $\calX$, as do apocalypses. It is therefore natural to ask: Is there an algorithm running directly on a more general class of non-smooth sets $\calX$ that only uses first-order information about the cost function, and which is guaranteed to converge to a stationary point?
    
    In the important special case of $\calX=\Rmnlr$, this question was answered positively in~\cite{olikier2022apocalypse}, as discussed in Section~\ref{sec:intro}.
    
    \item \textbf{Non-Clarke regular sets without apocalypses:} Can the implications in Corollary~\ref{cor:apocalypse_suff_cond} be reversed, or are there non-Clarke regular sets that do not have apocalypses?

    \item \textbf{Convergence without apocalypses:}
    For convex optimization, {\DPGD} with appropriate step sizes finds a global optimum (this can be shown by a straightforward modification of the standard analysis for projected gradient descent.)
    More generally, is it the case that {\DPGD} converges to stationary points on apocalypse-free nonsmooth sets, or are there other obstacles?
\end{enumerate}

\begin{acknowledgements}
Part of this work was conducted while all three authors were affiliated with the mathematics department and PACM at Princeton University.
\end{acknowledgements}

\bibliographystyle{abbrv}
\bibliography{bibl_lifts}

\clearpage

\appendix

\section{Facts about cones}
\label{apdx:cones}
Let $\calE$ be a Euclidean space equipped with an inner product $\inner{\cdot}{\cdot}$. 
A cone is a set $K \subseteq \calE$ with the following property: $u \in K \implies \alpha u \in K \ \forall \alpha > 0$.
The polar cone $K^\circ$ is defined in Definition~\ref{def:polardual}.
We begin by stating several standard results about convex cones and their polars:
\begin{lemma}[{\cite[Prop.~4.5]{deutsch2012best}}] \label{lem:coneproperties}
	Let $K$ be a cone in $\calE$.
	Then, $K^\circ$ is a closed, convex cone in $\calE$,
	and $K^{\circ\circ} = (K^\circ)^\circ$ is a closed, convex cone equal to $\overline{\conv}(K)$ (the closure of the convex hull of $K$).
	In particular, if $K$ is closed and convex, then $K^{\circ\circ} = K$.
	
	Furthermore, the following properties hold:
	\begin{itemize}
	    \item Let $K_1,K_2\subset\calE$ be cones. If $K_1 \subseteq K_2$, then $K_2^\circ \subseteq K_1^\circ$.
	\item The polar of a subspace $V \subseteq \calE$ is its orthogonal complement: $V^\circ = V^\perp$.
	\item $K \subseteq K^{\circ\circ}$.
\end{itemize}

\end{lemma}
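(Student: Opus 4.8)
The plan is to verify each assertion directly from the definition
$K^\circ = \{ w \in \calE : \inner{w}{v} \le 0 \text{ for all } v \in K \}$, reserving a separating‑hyperplane argument for the one nontrivial identity, the bipolar theorem $K^{\circ\circ} = \overline{\conv}(K)$.

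First I would dispatch the easy items. Writing $K^\circ = \bigcap_{v \in K} \{ w : \inner{w}{v} \le 0 \}$ exhibits it as an intersection of closed half‑spaces through the origin, hence a closed convex set; and $w \in K^\circ$, $\alpha > 0$ give $\inner{\alpha w}{v} = \alpha \inner{w}{v} \le 0$, so $K^\circ$ is a cone. Applying this with $K^\circ$ in place of $K$ shows $K^{\circ\circ} = (K^\circ)^\circ$ is again a closed convex cone. Monotonicity is immediate: if $K_1 \subseteq K_2$ then the linear constraints defining $K_2^\circ$ include those defining $K_1^\circ$, so $K_2^\circ \subseteq K_1^\circ$. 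For $K \subseteq K^{\circ\circ}$, take $v \in K$: every $w \in K^\circ$ satisfies $\inner{v}{w} = \inner{w}{v} \le 0$, which is exactly the membership condition for $v \in (K^\circ)^\circ$. For a subspace $V$, if $w \in V^\circ$ and $v \in V$ then $-v \in V$ too, so $\inner{w}{v} \le 0$ and $\inner{w}{-v} \le 0$ force $\inner{w}{v} = 0$; hence $V^\circ \subseteq V^\perp$, and $V^\perp \subseteq V^\circ$ is trivial, giving $V^\circ = V^\perp$.

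The substantive step is $K^{\circ\circ} = \overline{\conv}(K)$. Since $K$ is a cone, $\conv(K)$ is a convex cone and $\overline{\conv}(K)$ is a closed convex cone containing $K$; as $K^{\circ\circ}$ is closed, convex, and contains $K$, it contains $\overline{\conv}(K)$. For the reverse I would argue by contraposition: if $x \notin \overline{\conv}(K)$, set $p = \Proj_{\overline{\conv}(K)}(x)$ and $w = x - p \neq 0$. The obtuse‑angle characterization of the projection onto a closed convex set gives $\inner{w}{v - p} \le 0$ for all $v \in \overline{\conv}(K)$, i.e. $\inner{w}{v} \le \inner{w}{p} =: c$, while $\inner{w}{x} = c + \|w\|^2 > c$. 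Because $0 \in \overline{\conv}(K)$ we get $c \ge 0$, so $\inner{w}{x} > 0$. Now the conic structure does the work: for $v \in \overline{\conv}(K)$ and $t > 0$ we have $tv \in \overline{\conv}(K)$, so $t\inner{w}{v} \le c$; letting $t \to \infty$ forces $\inner{w}{v} \le 0$. In particular $\inner{w}{v} \le 0$ for all $v \in K$, so $w \in K^\circ$, while $\inner{w}{x} > 0$ shows $x \notin (K^\circ)^\circ = K^{\circ\circ}$. Hence $K^{\circ\circ} \subseteq \overline{\conv}(K)$, proving equality; and if $K$ is already closed and convex then $\overline{\conv}(K) = K$, so $K^{\circ\circ} = K$.

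The only real obstacle is this bipolar identity, and within it the observation that a generic separating functional can be upgraded to an element of $K^\circ$ — that is, made nonpositive on the \emph{entire} cone, not merely bounded above on it. This is precisely the $t \to \infty$ scaling step above, which exploits that $\overline{\conv}(K)$ is conic. Everything else is routine bookkeeping with the definition.
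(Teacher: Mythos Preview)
Your proof is correct and complete. The paper itself does not prove this lemma: it is stated with a citation to \cite[Prop.~4.5]{deutsch2012best} and no argument is given in the text. So there is no ``paper's own proof'' to compare against; you have simply supplied what the paper outsourced.

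Your argument is the standard one. The only place worth a remark is the line ``Because $0 \in \overline{\conv}(K)$'': this uses that $K$ is nonempty (take any $v\in K$ and send $\alpha v$ with $\alpha\to 0^+$), which is harmless in context but is an implicit assumption. Everything else---the intersection-of-half-spaces description of $K^\circ$, monotonicity, $K\subseteq K^{\circ\circ}$, $V^\circ=V^\perp$, and the separating-hyperplane/projection proof of the bipolar identity with the $t\to\infty$ scaling to force $\langle w,v\rangle\le 0$ on the whole cone---is exactly right.
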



\begin{lemma}[{\cite[Prop.~6.27]{bauschke2011convex}}] \label{lem:coneprojection}
	Let $K$ be a closed, convex cone in $\calE$.
	Let $v \in \calE$ be arbitrary.
	The projection of $v$ to $K$ exists and is unique.
	This projection is the unique vector $u \in \calE$ satisfying
	\begin{align*}
		u \in K, && v-u \in K^\circ && \textrm{ and } && \inner{u}{v-u} = 0.
	\end{align*}
\end{lemma}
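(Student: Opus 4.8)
The plan is to derive Lemma~\ref{lem:coneprojection} from the classical projection theorem for closed convex sets, combined with the extra structure that $K$ is a cone. First I would recall that a nonempty closed convex set in a Euclidean (or Hilbert) space admits a unique metric projection: the map $w \mapsto \|v - w\|^2$ is strictly convex and coercive on $K$, so it attains its infimum over $K$ at a unique point, which we call $u = \Proj_K(v)$. In finite dimensions one may instead intersect $K$ with a closed ball containing a near-optimal point to get compactness, apply Weierstrass, and use strict convexity for uniqueness. This disposes of the existence and uniqueness assertions; it remains to characterize $u$ via the three stated conditions.

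Next I would invoke the standard variational inequality for projection onto a convex set: $u = \Proj_K(v)$ if and only if $u \in K$ and $\inner{v-u}{w-u} \le 0$ for every $w \in K$. Now I would use the cone axiom. Note first that $0 \in K$, since for $v \in K$ the whole ray $\{\alpha v : \alpha > 0\}$ lies in $K$ and $\alpha v \to 0$ as $\alpha \to 0^+$, while $K$ is closed. Taking $w = 0$ in the variational inequality gives $\inner{v-u}{u} \ge 0$; taking $w = 2u \in K$ gives $\inner{v-u}{u} \le 0$; hence $\inner{u}{v-u} = 0$. Substituting back, the variational inequality becomes $\inner{v-u}{w} \le \inner{v-u}{u} = 0$ for all $w \in K$, i.e.\ $v - u \in K^\circ$. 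Thus the projection satisfies all three conditions.

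For the converse I would take any $u$ with $u \in K$, $v - u \in K^\circ$, and $\inner{u}{v-u} = 0$, and verify the variational inequality: for $w \in K$, $\inner{v-u}{w-u} = \inner{v-u}{w} - \inner{v-u}{u} = \inner{v-u}{w} \le 0$, the last inequality because $v - u \in K^\circ$ and $w \in K$. By the uniqueness already established in the variational characterization, $u = \Proj_K(v)$. Hence the three conditions pin down the projection uniquely, completing the proof.

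I do not expect a genuine obstacle here: the only substantive input is the Hilbert-space projection theorem (existence and uniqueness of the metric projection onto a closed convex set, together with its variational inequality), and everything after that is a two-line manipulation exploiting only that $K$ is a cone. If one wanted a fully self-contained argument, the mild remaining technical work would be reproving that theorem — a parallelogram-law argument showing a minimizing sequence is Cauchy, or, in finite dimensions, compactness plus strict convexity of $\|v-\cdot\|^2$.
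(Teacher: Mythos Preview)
Your proof is correct and matches the approach in the paper's (commented-out) proof: both derive the variational inequality $\inner{v-u}{w-u}\le 0$ for all $w\in K$, test it at $w=0$ and $w=2u$ to obtain $\inner{u}{v-u}=0$, then deduce $v-u\in K^\circ$, with the converse handled identically. In the published version the paper simply cites this as~\cite[Prop.~6.27]{bauschke2011convex} without proof.
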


\begin{corollary}[{\cite[Thm.~6.29]{bauschke2011convex}}]\label{cor:coneprojectionsplit}
    Let $K$ be a closed, convex cone in $\calE$.
	Given $v \in \calE$ arbitrary, let $u = \Proj_K(v)$ and $w = \Proj_{K^\circ}(v)$.
	Then, 
	\begin{align*}
	    v = u + w, && \inner{u}{w} = 0 && \textrm{ and } && \|v\|^2 = \dist(v,K)^2 + \dist(v,K^\circ)^2.
	\end{align*}
	
	In particular, $K+K^\circ = \calE$.
\end{corollary}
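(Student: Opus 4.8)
The plan is to derive the statement from the variational characterization of the projection onto a closed convex cone (Lemma~\ref{lem:coneprojection}) together with the bipolar identity $K^{\circ\circ} = K$ for closed convex cones (Lemma~\ref{lem:coneproperties}). This is Moreau's decomposition, and the crux is to show that $v - \Proj_K(v)$ is exactly $\Proj_{K^\circ}(v)$; everything else follows by a short computation.

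First I would set $u = \Proj_K(v)$, which exists and is unique since $K$ is a closed convex cone. By Lemma~\ref{lem:coneprojection}, $u$ is the unique vector characterized by $u \in K$, $v - u \in K^\circ$, and $\inner{u}{v-u} = 0$. Now define $w = v - u$ and verify that $w$ satisfies the three defining properties of $\Proj_{K^\circ}(v)$, reading Lemma~\ref{lem:coneprojection} with $K$ replaced by the closed convex cone $K^\circ$: indeed $w \in K^\circ$ by the above; $v - w = u$ lies in $(K^\circ)^\circ = K^{\circ\circ} = K$ because $K$ is closed and convex (Lemma~\ref{lem:coneproperties}); and $\inner{w}{v-w} = \inner{w}{u} = \inner{v-u}{u} = 0$. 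By the uniqueness part of Lemma~\ref{lem:coneprojection}, this forces $w = \Proj_{K^\circ}(v)$. Hence $v = u + w$ with $u = \Proj_K(v)$, $w = \Proj_{K^\circ}(v)$, and $\inner{u}{w} = 0$, which proves the first two assertions.

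For the norm identity I would expand $\|v\|^2 = \|u + w\|^2 = \|u\|^2 + 2\inner{u}{w} + \|w\|^2 = \|u\|^2 + \|w\|^2$ using $\inner{u}{w} = 0$, and then observe that $\|w\| = \|v - u\| = \dist(v, K)$ and $\|u\| = \|v - w\| = \dist(v, K^\circ)$ since $u$ and $w$ are the respective metric projections. Finally, $K + K^\circ = \calE$ is immediate: every $v \in \calE$ decomposes as $v = u + w$ with $u \in K$ and $w \in K^\circ$, while the reverse inclusion $K + K^\circ \subseteq \calE$ is trivial.

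I expect no serious obstacle here; the only point requiring care is the invocation of the bipolar theorem $K^{\circ\circ} = K$, valid precisely because $K$ is closed and convex, at the step that identifies $v - \Proj_K(v)$ with $\Proj_{K^\circ}(v)$ via the uniqueness in Lemma~\ref{lem:coneprojection}.
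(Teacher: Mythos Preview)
Your proof is correct and is precisely the standard Moreau decomposition argument. The paper does not supply its own proof here, deferring instead to the cited reference; the approach you take---verifying via Lemma~\ref{lem:coneprojection} and the bipolar identity $K^{\circ\circ}=K$ that $v-\Proj_K(v)$ satisfies the three characterizing properties of $\Proj_{K^\circ}(v)$---is exactly the expected one.
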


\begin{corollary} \label{cor:mu1mu2equalconvex}
	If $K$ is a closed, convex cone in $\calE$, then for all $v \in \calE$ we have:
    $$\|\Proj_K(v)\| = \dist(v, K^\circ).$$
\end{corollary}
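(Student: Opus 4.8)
The plan is to derive this directly from Corollary~\ref{cor:coneprojectionsplit}, which already does essentially all of the work. Fix $v \in \calE$. Since $K$ is a closed, convex cone, Lemma~\ref{lem:coneproperties} tells us that $K^\circ$ is also a closed, convex cone, so both projections $u = \Proj_K(v)$ and $w = \Proj_{K^\circ}(v)$ exist and are unique (Lemma~\ref{lem:coneprojection}). Because $K^\circ$ is closed and convex, the defining property of metric projection gives $\|v - w\| = \dist(v, K^\circ)$.

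Next I would invoke Corollary~\ref{cor:coneprojectionsplit} with this $v$: it yields the decomposition $v = u + w$ with $\inner{u}{w} = 0$. From $v = u + w$ we read off $u = v - w$, hence
\begin{align*}
    \|\Proj_K(v)\| = \|u\| = \|v - w\| = \dist(v, K^\circ),
\end{align*}
which is exactly the claim.

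There is essentially no obstacle here: the only thing to be slightly careful about is making sure the quantities are the ones named in the statement, i.e.\ that $w = \Proj_{K^\circ}(v)$ is the same object whose distance to $v$ realizes $\dist(v, K^\circ)$ — this is immediate from uniqueness of the projection onto the closed convex set $K^\circ$. One could alternatively give a self-contained argument using only Lemma~\ref{lem:coneprojection} by checking that $v - w$ satisfies the three characteristic properties of $\Proj_K(v)$ (namely $v-w \in K$, $w = v-(v-w) \in K^\circ$, and $\inner{v-w}{w}=0$), using $K^{\circ\circ} = K$; but since Corollary~\ref{cor:coneprojectionsplit} is already available, the short route above suffices.
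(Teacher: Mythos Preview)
Your proof is correct and takes essentially the same approach as the paper: both invoke Corollary~\ref{cor:coneprojectionsplit} to write $\Proj_K(v) = v - \Proj_{K^\circ}(v)$ and then identify $\|v - \Proj_{K^\circ}(v)\|$ with $\dist(v, K^\circ)$. Your version just spells out the existence and uniqueness of the projections more explicitly.
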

\begin{proof}
By Corollary~\ref{cor:coneprojectionsplit}, $\|\Proj_K(v)\| = \|v - \Proj_{K^{\circ}}(v)\| = \dist(v,K^\circ)$.
\end{proof}

\begin{remark}\label{rmk:everything_breaks_if_noncvx}
If $K$ is not convex, the last three results can break. 
To see this, consider $K$ composed of three rays in the plane starting from the origin, with two of them forming an angle $4\alpha \in (0, \pi)$ and the other two angles being equal to $\pi - 2\alpha$.
Notice that $K$ is not convex, and that $K^\circ = \{0\}$.
In particular, $K + K^\circ \neq \calE$.
Let $v$ be some arbitrary vector in $\calE$.
Of course, $w = \Proj_{K^\circ}(v) = 0$.
Thus, if $v \notin K$, we see that $v \neq u + w$ with $u = \Proj_K(v)$.
Now take $v$ to bisect one of the angles $\pi - 2\alpha$.
Then, $\dist(v,K^\circ) = \|v - w\| = \|v\|$, yet $\|\Proj_K(v)\| = \|u\| = \cos(\pi/2 - \alpha) \|v\|$.
We see that $\|\Proj_K(v)\| / \dist(v,K^\circ)$ can be arbitrarily close to zero.

Even the property that $\inner{u}{w} = 0$ can fail.
Indeed, for some $\epsilon > 0$, let $K$ be the cone in $\reals^3$ consisting of the three rays generated by $x_1 = (1, 1, 0)\transpose$, $x_2 = (-1, 1, 0)\transpose$ and $x_3 = (0, 1, -\epsilon)\transpose$, and let $v = (0, 1, \epsilon)\transpose$.
Then, $q = \Proj_{K^{\circ\circ}}(v) = (0, 1, 0)\transpose$ (this is geometrically clear, and can be verified formally with Lemma~\ref{lem:coneprojection}).
We know from Corollary~\ref{cor:coneprojectionsplit} that $v = q + w$ where $w = \Proj_{K^\circ}(v)$, so that $w = v - q = (0, 0, \epsilon)\transpose$.
Notice that $w$ is orthogonal to $x_1$ and $x_2$ but not to $x_3$.
Yet, for $\epsilon$ small enough the projection $u = \Proj_K(v)$ is unique, equal to a nonzero multiple of $x_3$, so that $\inner{u}{w} \neq 0$.
To verify this last statement, simply consider the projection of $v$ to each of the rays generated by $x_1, x_2, x_3$.
Clearly, $y_i \triangleq \Proj_{\mathrm{cone}(x_i)}(v) = \max\!\left(0, \frac{\inner{x_i}{v}}{\inner{x_i}{x_i}}\right) x_i$.
Thus, $y_1 = (1/2, 1/2, 0)\transpose$, $y_2 = (-1/2, 1/2, 0)\transpose$ and $y_3 = \frac{1-\epsilon^2}{1+\epsilon^2} x_3$ (if $\epsilon \leq 1$).
The distances to these projections are $\|v - y_1\| = \|v - y_2\| = \sqrt{1/2 + \epsilon^2}$ and $\|v - y_3\| = \frac{2\epsilon}{\sqrt{1+\epsilon^2}}$.
It follows that $u$ is a nonzero multiple of $x_3$ for all $0 < \epsilon < \sqrt{5 - \sqrt{17}}/2$.
\begin{journal}
See the arXiv version of this paper for counterexamples~\cite[Rmk.~A.5]{levin2021finding}.
\end{journal}
\begin{tbd}
To see this, consider $K$ composed of three rays in the plane starting from the origin, with two of them forming an angle $4\alpha \in (0, \pi)$ and the other two angles being equal to $\pi - 2\alpha$.
Notice that $K$ is not convex, and that $K^\circ = \{0\}$.
In particular, $K + K^\circ \neq \calE$.
Let $v$ be some arbitrary vector in $\calE$.
Of course, $w = \Proj_{K^\circ}(v) = 0$.
Thus, if $v \notin K$, we see that $v \neq u + w$ with $u = \Proj_K(v)$.
Now take $v$ to bisect one of the angles $\pi - 2\alpha$.
Then, $\dist(v,K^\circ) = \|v - w\| = \|v\|$, yet $\|\Proj_K(v)\| = \|u\| = \cos(\pi/2 - \alpha) \|v\|$.
We see that $\|\Proj_K(v)\| / \dist(v,K^\circ)$ can be arbitrarily close to zero.

Even the property that $\inner{u}{w} = 0$ can fail.
Indeed, for some $\epsilon > 0$, let $K$ be the cone in $\reals^3$ consisting of the three rays generated by $x_1 = (1, 1, 0)\transpose$, $x_2 = (-1, 1, 0)\transpose$ and $x_3 = (0, 1, -\epsilon)\transpose$, and let $v = (0, 1, \epsilon)\transpose$.
Then, $q = \Proj_{K^{\circ\circ}}(v) = (0, 1, 0)\transpose$ (this is geometrically clear, and can be verified formally with Lemma~\ref{lem:coneprojection}).
We know from Corollary~\ref{cor:coneprojectionsplit} that $v = q + w$ where $w = \Proj_{K^\circ}(v)$, so that $w = v - q = (0, 0, \epsilon)\transpose$.
Notice that $w$ is orthogonal to $x_1$ and $x_2$ but not to $x_3$.
Yet, for $\epsilon$ small enough the projection $u = \Proj_K(v)$ is unique, equal to a nonzero multiple of $x_3$, so that $\inner{u}{w} \neq 0$.
To verify this last statement, simply consider the projection of $v$ to each of the rays generated by $x_1, x_2, x_3$.
Clearly, $y_i \triangleq \Proj_{\mathrm{cone}(x_i)}(v) = \max\!\left(0, \frac{\inner{x_i}{v}}{\inner{x_i}{x_i}}\right) x_i$.
Thus, $y_1 = (1/2, 1/2, 0)\transpose$, $y_2 = (-1/2, 1/2, 0)\transpose$ and $y_3 = \frac{1-\epsilon^2}{1+\epsilon^2} x_3$ (if $\epsilon \leq 1$).
The distances to these projections are $\|v - y_1\| = \|v - y_2\| = \sqrt{1/2 + \epsilon^2}$ and $\|v - y_3\| = \frac{2\epsilon}{\sqrt{1+\epsilon^2}}$.
It follows that $u$ is a nonzero multiple of $x_3$ for all $0 < \epsilon < \sqrt{5 - \sqrt{17}}/2$.
\end{tbd}
\end{remark}

Despite that remark, we can secure certain properties in the nonconvex case. 
\begin{proposition} \label{prop:normprojtocone}
	Let $K$ be a closed cone (not necessarily convex) in $\calE$.
	Let $v \in \calE$ be arbitrary and let $u$ be a projection of $v$ to $K$ (note that $u$ exists but is not necessarily unique).
	We have
	\begin{align*}
		\|u\| & = \max\!\left(0, \max_{z \in K, \|z\| = 1} \inner{v}{z} \right) = \sqrt{\inner{v}{u}}.
	\end{align*}
	In particular:
	\begin{itemize}
	    \item all projections of $v$ to $K$ have the same norm,
	    \item $\Proj_K(v) = 0 \iff v \in K^\circ$,
	    \item $v\mapsto \|\Proj_K(v)\|$ is 1-Lipschitz.
	\end{itemize}
\end{proposition}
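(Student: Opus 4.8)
The plan is to reduce the entire statement to the elementary one-dimensional problem of projecting $v$ onto a single ray, exploiting that a cone is a union of rays through the origin. First I would dispose of existence (which the statement already takes for granted): a nonempty closed cone contains $0$ (from $z\in K \Rightarrow z/n \in K$ and $z/n \to 0 \in \overline{K} = K$), hence $\dist(v,K) \le \|v\|$, and a minimizing sequence for $\inf_{z\in K}\|v-z\|$ is bounded and, by closedness, has a limit $u \in K$ realizing the distance.

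The core of the argument is two short computations. For a fixed unit vector $z \in K$, the whole ray $\{tz : t\ge 0\}$ lies in $K$, and $t \mapsto \|v-tz\|^2 = \|v\|^2 - 2t\inner{v}{z} + t^2$ is minimized over $t\ge 0$ at $t = \max(0,\inner{v}{z})$ with value $\|v\|^2 - \max(0,\inner{v}{z})^2$. Since every nonzero element of $K$ lies on exactly one such ray, and $0 \in K$ contributes the never-smaller value $\|v\|^2$, this gives $\dist(v,K)^2 = \|v\|^2 - \max\bigl(0,\ \sup_{z\in K,\,\|z\|=1}\inner{v}{z}\bigr)^2$, with the convention that a supremum over the empty set is $-\infty$ so that the degenerate case $K=\{0\}$ is covered. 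Because $\{z\in K : \|z\|=1\}$ is compact, this supremum is attained whenever it is over a nonempty set, i.e.\ it is a maximum. Next, since $K$ is a cone, $tu\in K$ for all $t>0$, so optimality of $u$ gives $\|v-tu\|^2 \ge \|v-u\|^2$ for all $t>0$; expanding yields $(t-1)\bigl[(t+1)\|u\|^2 - 2\inner{v}{u}\bigr]\ge 0$, and letting $t\to 1^+$ and $t\to 1^-$ forces $\|u\|^2 = \inner{v}{u}$, which is the identity $\|u\| = \sqrt{\inner{v}{u}}$ (the case $u=0$ being trivial). Combining $\|v-u\|^2 = \|v\|^2 - 2\inner{v}{u} + \|u\|^2 = \|v\|^2 - \|u\|^2$ with the displayed formula for $\dist(v,K)^2$ gives $\|u\|^2 = \max(0,\max_{z\in K,\,\|z\|=1}\inner{v}{z})^2$, hence the first claimed identity after taking square roots.

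The three bullet consequences then follow with no extra work. The quantity $\max(0,\max_{z\in K,\,\|z\|=1}\inner{v}{z})$ depends only on $v$ and $K$, so all projections of $v$ have the same norm. Moreover $\|\Proj_K(v)\| = 0$ holds iff $\inner{v}{z}\le 0$ for all unit $z\in K$, which by positive homogeneity of $K$ is iff $\inner{v}{z}\le 0$ for all $z\in K$, i.e.\ iff $v\in K^\circ$ (Definition~\ref{def:polardual}); and $\|\Proj_K(v)\| = 0$ is equivalent to $\Proj_K(v) = 0$. Finally, $v\mapsto \|\Proj_K(v)\| = \max\bigl(0,\ \sup_{z\in K,\,\|z\|=1}\inner{v}{z}\bigr)$ is a pointwise supremum of the $1$-Lipschitz maps $v\mapsto\inner{v}{z}$ together with the constant $0$, and a supremum of $1$-Lipschitz functions is $1$-Lipschitz.

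The only (mild) obstacle is the bookkeeping forced by nonconvexity: making the decomposition of $K$ into rays rigorous, handling $K=\{0\}$ via the empty-supremum convention, and checking that the optimal scalar along the ray through $u$ is exactly $\|u\|$. Once these are pinned down, the proposition reduces to one-variable calculus plus the standard fact that suprema of $1$-Lipschitz functions are $1$-Lipschitz.
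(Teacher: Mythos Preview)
Your proof is correct and takes essentially the same approach as the paper: both decompose the cone into rays, minimize $\|v-tz\|^2$ over $t\ge 0$ along each ray to get $t=\max(0,\inner{v}{z})$, and deduce the formula for $\|u\|$. The only cosmetic difference is that the paper first establishes $\|u\|=\max(0,\max_z\inner{v}{z})$ and derives $\inner{v}{u}=\|u\|^2$ from it, whereas you obtain $\inner{v}{u}=\|u\|^2$ directly from the first-order condition along the ray through $u$ and then combine it with the distance formula; the Lipschitz and polar-cone arguments are the same.
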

\begin{proof}
	There exists $z \in K$ with $\|z\| = 1$ and $\alpha \geq 0$ such that $u = \alpha z$.
	By definition of $u$, it holds that $z$ and $\alpha$ jointly minimize
	\begin{align*}
		\|v - \alpha z\|^2 = \alpha^2 - 2\alpha \inner{v}{z} + \|v\|^2.
	\end{align*}
	This is a convex quadratic in $\alpha$.
	For any given $z$, the optimal $\alpha \geq 0$ is given by
	\begin{align*}
		\alpha = \max(0, \inner{v}{z}).
	\end{align*}
	Thus, $z$ is the vector in $K$ with $\|z\| = 1$ which minimizes
	\begin{align*}
		\alpha \left( \alpha - 2 \inner{v}{z}\right) = g(\inner{v}{z}) && \textrm{ with } && g(t) = \max(0, t) \left( \max(0, t) - 2t \right).
	\end{align*}
	The function $g \colon \reals \to \reals$ is decreasing.
	Therefore, $z$ is the vector in $K$ with $\|z\| = 1$ which maximizes $\inner{v}{z}$.
	We conclude that
	\begin{align*}
		\|u\| = \|\alpha y\| = \alpha = \max\!\left( 0, \max_{z \in K, \|z\| = 1} \inner{v}{z} \right).
	\end{align*}
	Furthermore, either $\|u\|>0$ in which case the above shows 
	\begin{align*}
	    \|u\| = \max_{z\in K, \|z\|=1}\inner{v}{z} = \inner{v}{\frac{u}{\|u\|}},
	\end{align*}
	hence $\inner{v}{u}=\|u\|^2$, or $\|u\|=0$ in which case $\inner{v}{u}=\|u\|^2=0$. 
	
	Finally, if $v,v'\in\calE$ and $z\in K$ satisfies $\|z\|=1$, then $\langle v, z\rangle = \langle v', z\rangle + \langle v-v',z\rangle$.
	Taking max over such $z$ gives $\|\Proj_K(v)\|\leq \|\Proj_K(v')\| + \|v-v'\|$, and reversing the roles of $v$ and $v'$ shows that $v\mapsto \|\Proj_K(v)\|$ is 1-Lipschitz.
\end{proof}

Next, we show that $\Proj_{\T_x\calX}(-\nabla f(x))$ only depends on the values of $f$ on $\calX$, not on its particular extension to $\calE$. This desirable property means that algorithms designed to be insensitive to the (somewhat arbitrary) values of $f$ outside of the feasible set $\calX$ can use $\Proj_{\T_x\calX}(-\nabla f(x))$ to construct update rules.
This is notably the case for the {\DPGD} algorithm of~\cite{schneider2015convergence}.
\begin{lemma}\label{lem:ProjKvinvariant}
	Let $K$ be a closed cone (not necessarily convex) in $\calE$.
	Let $v \in \calE$ be arbitrary.
	Split $v = u + w$ with $u$ in the linear span of $K$ and $w$ orthogonal to that linear space.
	Then, $\Proj_K(v)$ (the \emph{set} of projections of $v$ to $K$) is identical to $\Proj_K(u)$.
	In particular, $\|\Proj_K(v)\| = \|\Proj_K(u)\|$ and $\dist(v, K^\circ) = \dist(u, K^\circ)$.
\end{lemma}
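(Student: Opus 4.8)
The plan is to argue straight from the variational definition of the metric projection, never invoking convexity of $K$. Note first that the decomposition $v = u+w$ is precisely $u = \Proj_{\spann(K)}(v)$ and $w = v-u$, which is unique because $\spann(K)$ is a closed subspace. The key observation I would record is that $\spann(K)$ contains both $K$ and $u$, while $w \perp \spann(K)$; hence for every $z \in K$ we have $z-u \in \spann(K)$, so $\inner{z-u}{w} = 0$, and the Pythagorean identity gives
\begin{align*}
    \|v-z\|^2 = \|(u-z)+w\|^2 = \|u-z\|^2 + \|w\|^2 \qquad \textrm{for all } z \in K.
\end{align*}
Since $\|w\|^2$ is independent of $z$, minimizing $\|v-z\|^2$ over $z \in K$ is literally the same problem (up to an additive constant) as minimizing $\|u-z\|^2$ over $z \in K$; the two problems therefore have the same set of minimizers, i.e.\ $\Proj_K(v) = \Proj_K(u)$ as sets. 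Equality of norms $\|\Proj_K(v)\| = \|\Proj_K(u)\|$ is then immediate (and is consistent with Proposition~\ref{prop:normprojtocone}, which already guarantees that all projections of a given vector to $K$ share a common norm).

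For the polar-distance claim I would reduce to the part just proved. By Lemma~\ref{lem:coneproperties}, $K^{\circ\circ} = \overline{\conv}(K)$ is a closed convex cone and $(K^{\circ\circ})^\circ = K^\circ$, so Corollary~\ref{cor:mu1mu2equalconvex} applied to the cone $K^{\circ\circ}$ yields $\dist(v,K^\circ) = \|\Proj_{K^{\circ\circ}}(v)\|$ and likewise $\dist(u,K^\circ) = \|\Proj_{K^{\circ\circ}}(u)\|$. Moreover $\spann(K^{\circ\circ}) = \spann(K)$: the inclusion ``$\subseteq$'' holds because $\overline{\conv}(K)$ lies in the closed subspace $\spann(K)$, and ``$\supseteq$'' holds because $K \subseteq K^{\circ\circ}$. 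Hence $u \in \spann(K^{\circ\circ})$ and $w \perp \spann(K^{\circ\circ})$, so applying the first part of the lemma with $K$ replaced by $K^{\circ\circ}$ gives $\|\Proj_{K^{\circ\circ}}(v)\| = \|\Proj_{K^{\circ\circ}}(u)\|$, and therefore $\dist(v,K^\circ) = \dist(u,K^\circ)$. (Alternatively one can argue directly: $(\spann K)^\perp \subseteq K^\circ$; projecting any $y \in K^\circ$ orthogonally onto $\spann(K)$ keeps it in $K^\circ$; and $K^\circ$ is a convex cone, hence closed under addition; so $K^\circ$ splits as the orthogonal sum $(K^\circ \cap \spann K) \oplus (\spann K)^\perp$, and substituting $v = u+w$ into $\dist(\cdot,K^\circ)^2$ separates the two components, with the $w$-component contributing $0$ in both $\dist(v,K^\circ)^2$ and $\dist(u,K^\circ)^2$.)

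I do not expect a genuine obstacle: the statement is essentially a bookkeeping fact. The single point that truly requires care is that $K$ is not assumed convex, so one must resist reaching for identities such as additivity of $\Proj_K$ or the splitting $\calE = K + K^\circ$; everything must flow from the variational characterization together with the observation that $\spann(K)$ is exactly the subspace along which the ``irrelevant'' component $w$ lives (so that it drops out of the objective as a constant). A minor point, already implicit in the surrounding statements, is that projections exist in the first place, which holds because $K$ is a nonempty closed subset of the Euclidean space $\calE$.
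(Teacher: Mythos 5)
Your proof is correct and takes essentially the same approach as the paper: Pythagoras to show the objectives $\|v-z\|^2$ and $\|u-z\|^2$ differ by a constant over $z\in K$, then passing through $K^{\circ\circ}$ and Corollary~\ref{cor:mu1mu2equalconvex} for the polar-distance claim. The only difference is that you explicitly check $\spann(K^{\circ\circ})=\spann(K)$ before re-applying the first part to $K^{\circ\circ}$, a step the paper leaves implicit.
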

\begin{proof}
	Let $z \in K$ be arbitrary.
	Then, $\|z - v\|^2 = \|(z-u) - w\|^2 = \|z-u\|^2 + \|w\|^2$.
	Therefore, the set of elements $z \in K$ which minimize $\|z - v\|$ is exactly the same as the set of elements $z \in K$ which minimize $\|z-u\|$, that is, $\Proj_K(v) = \Proj_K(u)$.
	For the second claim, Lemma~\ref{lem:coneproperties} implies that $K^{\circ\circ}$ is a closed, convex cone satisfying
	$(K^{\circ\circ})^\circ = (K^\circ)^{\circ\circ} = K^\circ$. Corollary~\ref{cor:mu1mu2equalconvex} then implies $\dist(v, K^\circ) = \|\Proj_{K^{\circ\circ}}(v)\|$ so we can apply the first claim.
\end{proof}
Lemma~\ref{lem:ProjKvinvariant} shows that $\Proj_{\T_x\calX}(-\nabla f(x))$ only depends on $\Proj_{\mathrm{span}(\T_x\calX)}(-\nabla f(x))$. We proceed to show that the latter projection only depends on $f|_{\calX}$.
\begin{lemma} \label{lem:nablabarfinvariance}
	Let $f \colon \calE \to \reals$ be differentiable on a neighborhood of $\calX \subseteq \calE$.
	While $\nabla f(x)$ may depend on $f$ through its values outside of $\calX$, the orthogonal projection of $\nabla f(x)$ to the linear span of the tangent cone $\T_x\calX$ is fully determined by $f|_{\calX}$.
\end{lemma}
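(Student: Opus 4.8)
The plan is to reduce the lemma to one assertion: \emph{if $h$ is differentiable on a neighborhood of $\calX$ and $h$ vanishes on $\calX$, then $\nabla h(x) \perp \spann(\T_x\calX)$ for every $x \in \calX$.} Granting this, I would argue as follows. Let $f_1, f_2$ be differentiable on neighborhoods of $\calX$ with $f_1|_\calX = f_2|_\calX$. Then $h := f_1 - f_2$ is differentiable on a neighborhood of $\calX$ and vanishes on $\calX$, so the assertion gives $\nabla f_1(x) - \nabla f_2(x) = \nabla h(x) \perp \spann(\T_x\calX)$. Since orthogonal projection onto the fixed subspace $\spann(\T_x\calX)$ is a linear map, this forces $\Proj_{\spann(\T_x\calX)}(\nabla f_1(x)) = \Proj_{\spann(\T_x\calX)}(\nabla f_2(x))$. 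As $f_1$ and $f_2$ were arbitrary functions agreeing on $\calX$, this says exactly that $\Proj_{\spann(\T_x\calX)}(\nabla f(x))$ is determined by $f|_\calX$, which is the claim of the lemma (and, combined with Lemma~\ref{lem:ProjKvinvariant}, shows the same for $\Proj_{\T_x\calX}(-\nabla f(x))$).

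To prove the assertion, fix $x \in \calX$ and $v \in \T_x\calX$; the goal is $\inner{\nabla h(x)}{v} = 0$. By Definition~\ref{def:tangentcone}, choose $x_k \in \calX$ and $\tau_k > 0$ with $\tau_k \to 0$ and $v_k := (x_k - x)/\tau_k \to v$; we may discard any indices with $x_k = x$ (and if infinitely many of them occur, then $v = 0$ and there is nothing to prove). Using differentiability of $h$ at $x$ together with $h(x_k) = 0 = h(x)$, we have
\[
0 = h(x_k) - h(x) = \inner{\nabla h(x)}{x_k - x} + \|x_k - x\|\,\epsilon_k = \tau_k\big(\inner{\nabla h(x)}{v_k} + \|v_k\|\,\epsilon_k\big),
\]
where $\epsilon_k \to 0$. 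Dividing by $\tau_k > 0$ and letting $k \to \infty$, using $v_k \to v$, $\|v_k\| \to \|v\|$, and $\epsilon_k \to 0$, yields $\inner{\nabla h(x)}{v} = 0$. Since $v \in \T_x\calX$ was arbitrary and the inner product is linear in its second argument, $\nabla h(x)$ annihilates $\spann(\T_x\calX)$, i.e., $\nabla h(x) \perp \spann(\T_x\calX)$.

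I do not expect a genuine obstacle here: the only delicate point is the first-order remainder bookkeeping, where one uses $\|x_k - x\| = \tau_k\|v_k\|$ and the boundedness of $\|v_k\|$ (it converges to $\|v\|$) to see that the error term, divided by $\tau_k$, still vanishes in the limit. The corner case $x_k = x$ is dispatched as noted above.
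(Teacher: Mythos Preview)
Your proof is correct and follows essentially the same approach as the paper: both arguments pick $v\in\T_x\calX$, realize it via sequences $x_k\in\calX$, $\tau_k\to 0$, and use first-order Taylor expansion to identify $\innersmall{\nabla f(x)}{v}$ (or $\innersmall{\nabla h(x)}{v}$) with a limit of difference quotients along $\calX$. The only cosmetic difference is that the paper shows directly that $\innersmall{\nabla f(x)}{v}=\lim_k (f(x_k)-f(x))/\tau_k$ depends only on $f|_\calX$, whereas you pass through the equivalent contrapositive ``$h|_\calX=0\Rightarrow \nabla h(x)\perp\spann(\T_x\calX)$''.
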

\begin{proof}
	Let $v \in \T_x\calX$ be arbitrary.
	By Definition~\ref{def:tangentcone}, there exist sequences $(x_k)$ in $\calX$ and $(\tau_k)$ in $\reals$ with $\tau_k > 0$ for all $k$ and $\lim_{k \to \infty} \tau_k = 0$ such that $v = \lim_{k \to \infty} \frac{x_k - x}{\tau_k}$.
	Since $f$ is differentiable at $x$ it follows that for all $k$ we have
	\begin{align*}
		\frac{f(x_k) - f(x)}{\tau_k} = \frac{\D f(x)[x_k - x] + o(\|x_k - x\|)}{\tau_k}.
	\end{align*}
	Considering the right-hand side, we see that the limit for $k \to \infty$ exists and is equal to $\D f(x)[v]$.
	Therefore,
	\begin{align*}
		\innersmall{\nabla f(x)}{v} = \D f(x)[v] = \lim_{k \to \infty} \frac{f(x_k) - f(x)}{\tau_k}.
	\end{align*}
	This confirms that the inner product of $\nabla f(x)$ with any element of the tangent cone $\T_x\calX$ is fully determined by $f|_{\calX}$.
	By linearity, this extends to all $v$ in the linear span of $\T_x\calX$.
\end{proof}

Our definition of apocalypses in Definition~\ref{def:apocalypses} depends on the notion of approximate stationarity we use. Instead of relaxing the characterization of stationarity in Proposition~\ref{prop:equiv_defns_of_1crit}(c) as we did in Definition~\ref{def:criticalPapprox}, we could relax Proposition~\ref{prop:equiv_defns_of_1crit}(b) by requiring $\dist(-\nabla f(x),(\T_x\calX)^{\circ})\leq\epsilon$ at approximate stationary points. 
We proceed to relate these two measures of approximate stationarity and show that Definition~\ref{def:criticalPapprox} is more natural and introduces at most as many apocalypses as the alternative.

\begin{theorem} \label{thm:invarianceprojsummary}
	With $v = -\nabla f(x)$ and $K = \T_x\calX$,
	let $\mu_1 = \|\Proj_K(v)\|$ and $\mu_2 = \dist(v, K^\circ)$ denote two measures of approximate stationarity for $x \in \calX$. Then:
	\begin{enumerate}[(a)]
	    \item The set $\Proj_K(v)$ and the measures $\mu_1, \mu_2$ depend on $f$ only through its restriction $f|_{\calX}$.
	    \item $\mu_1$ is the best linear rate of decrease one can achieve in $f$ by moving away from $x$ along a direction in the tangent cone $\T_x\calX$, while $\mu_2$ is the best such rate one can achieve by moving away from $x$ along a direction in the (possibly larger) cone $\overline{\conv}(\T_x\calX)$.
	    \item It always holds that $\mu_1 \leq \mu_2$ but the inequality may be strict.
	    \item $\mu_1 = 0 \iff \mu_2 = 0$.
	    \item If $K$ is convex (in particular, if it is a linear subspace), then $\mu_1 = \mu_2$.
	\end{enumerate}\end{theorem}
\begin{proof}
    Part (a) follows from Lemmas~\ref{lem:ProjKvinvariant}-\ref{lem:nablabarfinvariance}. Part (b) follows from Proposition~\ref{prop:normprojtocone}, Corollary~\ref{cor:mu1mu2equalconvex}, and Lemma~\ref{lem:coneproperties}. The inequality in part (c) follows directly from (b), and it can be strict by Remark~\ref{rmk:everything_breaks_if_noncvx}.
	Part (d) follows from Proposition~\ref{prop:normprojtocone}. Part (e) follows from Corollary~\ref{cor:mu1mu2equalconvex}.
\end{proof}
Theorem~\ref{thm:invarianceprojsummary}(b) suggests that $\mu_2$ is less natural as a notion of approximate stationarity. Furthermore, since $\mu_1$ and $\mu_2$ agree when $\T_x\calX$ is a linear space and $\mu_1=0\iff\mu_2=0$, the apocalypse we construct in Section~\ref{subsec:apocalypse_on_mats} also ``fools'' $\mu_2$.
\begin{tbd}
Our definition of apocalypses in Definition~\ref{def:apocalypses} depends on the notion of approximate stationarity we use. Instead of relaxing the characterization of stationarity in Proposition~\ref{prop:equiv_defns_of_1crit}(c) as we did in Definition~\ref{def:criticalPapprox}, we could relax Proposition~\ref{prop:equiv_defns_of_1crit}(b) by requiring $\dist(-\nabla f(x),(\T_x\calX)^{\circ})\leq\epsilon$ at approximate stationary points. 
We proceed to relate these two measures of approximate stationarity and show that Definition~\ref{def:criticalPapprox} is more natural and introduces at most as many apocalypses as the alternative.

\begin{theorem} \label{thm:invarianceprojsummary}
	With $v = -\nabla f(x)$ and $K = \T_x\calX$,
	let $\mu_1 = \|\Proj_K(v)\|$ and $\mu_2 = \dist(v, K^\circ)$ denote two measures of approximate stationarity for $x \in \calX$. Then:
	\begin{enumerate}[(a)]
	    \item The set $\Proj_K(v)$ and the measures $\mu_1, \mu_2$ depend on $f$ only through its restriction $f|_{\calX}$.
	    \item $\mu_1$ is the best linear rate of decrease one can achieve in $f$ by moving away from $x$ along a direction in the tangent cone $\T_x\calX$, while $\mu_2$ is the best such rate one can achieve by moving away from $x$ along a direction in the (possibly larger) cone $\overline{\conv}(\T_x\calX)$.
	    \item It always holds that $\mu_1 \leq \mu_2$ but the inequality may be strict.
	    \item $\mu_1 = 0 \iff \mu_2 = 0$.
	    \item If $K$ is convex (in particular, if it is a linear subspace), then $\mu_1 = \mu_2$.
	\end{enumerate}\end{theorem}
\begin{proof}
    Part (a) follows from Lemmas~\ref{lem:ProjKvinvariant}-\ref{lem:nablabarfinvariance}. Part (b) follows from Proposition~\ref{prop:normprojtocone}, Corollary~\ref{cor:mu1mu2equalconvex}, and Lemma~\ref{lem:coneproperties}. The inequality in part (c) follows directly from (b), and it can be strict by Remark~\ref{rmk:everything_breaks_if_noncvx}.
	Part (d) follows from Proposition~\ref{prop:normprojtocone}. Part (e) follows from Corollary~\ref{cor:mu1mu2equalconvex}.
\end{proof}
Theorem~\ref{thm:invarianceprojsummary}(b) suggests that $\mu_2$ is less natural as a notion of approximate stationarity. Furthermore, since $\mu_1$ and $\mu_2$ agree when $\T_x\calX$ is a linear space and $\mu_1=0\iff\mu_2=0$, the apocalypse we construct in Section~\ref{subsec:apocalypse_on_mats} also ``fools'' $\mu_2$.
\end{tbd}

\section{Proof of Theorem~\ref{thm:apocalypse_char}}
\label{apdx:pf_of_apocs_char}

\begin{lemma} \label{lem:polar_of_limsup}
    Suppose $(x_k)_{k\geq 1}\subseteq\calX$ is a sequence converging to $x\in\calX$. Then
    \begin{equation} 
        \left(\limsup_{k\to\infty}\T_{x_k}\calX\right)^{\circ} = \{v\in\calE:\lim_{k\to\infty}\|\Proj_{\T_{x_k}}(v)\| = 0\}.
    \end{equation}
\end{lemma}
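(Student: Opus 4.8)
The plan is to prove the two set inclusions separately, leaning on Proposition~\ref{prop:normprojtocone} and the subsequential description of the outer limit in Definition~\ref{def:lims_of_sets}. Throughout, write $K_k = \T_{x_k}\calX$ and $K_\infty = \limsup_k K_k$. Each $K_k$ is a closed cone, so its unit sphere $\{z \in K_k : \|z\| = 1\}$ is compact and Proposition~\ref{prop:normprojtocone} gives $\|\Proj_{K_k}(v)\| = \max\bigl(0, \max_{z \in K_k, \|z\|=1} \inner{v}{z}\bigr)$ with the inner maximum attained. Also recall that $w \in K_\infty$ iff $\liminf_k \dist(w, K_k) = 0$, equivalently, iff there is a subsequence $(k_j)$ and points $w_{k_j} \in K_{k_j}$ with $w_{k_j} \to w$ (the $w_{k_j}$ can be chosen to nearly realize $\dist(w, K_{k_j})$).

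For the inclusion ``$\supseteq$'', assume $\lim_k \|\Proj_{K_k}(v)\| = 0$ and fix $w \in K_\infty$. Choose $(k_j)$ and $w_{k_j} \in K_{k_j}$ with $w_{k_j} \to w$. For each $j$, testing the unit vector $w_{k_j}/\|w_{k_j}\| \in K_{k_j}$ (or trivially if $w_{k_j} = 0$) yields $\inner{v}{w_{k_j}} \leq \|w_{k_j}\|\,\|\Proj_{K_{k_j}}(v)\|$. Sending $j \to \infty$ makes the right-hand side tend to $\|w\| \cdot 0 = 0$, so $\inner{v}{w} \leq 0$; since $w \in K_\infty$ was arbitrary, $v \in K_\infty^\circ$.

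For the inclusion ``$\subseteq$'', assume $v \in K_\infty^\circ$ and suppose toward a contradiction that $\|\Proj_{K_k}(v)\| \not\to 0$. Then along some subsequence $\|\Proj_{K_{k_j}}(v)\| \geq \epsilon$ for a fixed $\epsilon > 0$, and by Proposition~\ref{prop:normprojtocone} there exist unit vectors $z_{k_j} \in K_{k_j}$ with $\inner{v}{z_{k_j}} \geq \epsilon$. Since the unit sphere of $\calE$ is compact, a further subsequence gives $z_{k_{j_l}} \to z_*$ with $\|z_*\| = 1$; as $z_{k_{j_l}} \in K_{k_{j_l}}$, this forces $\liminf_k \dist(z_*, K_k) = 0$, i.e. $z_* \in K_\infty$. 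But passing to the limit in $\inner{v}{z_{k_{j_l}}} \geq \epsilon$ gives $\inner{v}{z_*} \geq \epsilon > 0$, contradicting $v \in K_\infty^\circ$. Hence $\|\Proj_{K_k}(v)\| \to 0$.

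I expect the only delicate point to be the ``$\subseteq$'' direction, specifically the extraction of the limiting unit direction $z_*$ and the verification that it lies in $K_\infty$: this is where sequential compactness of the unit sphere of $\calE$ and the closedness of the tangent cones $K_k$ (needed so that the maxima defining $\|\Proj_{K_k}(v)\|$ are attained at genuine unit vectors) enter. The ``$\supseteq$'' direction and the remaining algebra are otherwise routine consequences of Proposition~\ref{prop:normprojtocone}. Note that the hypothesis $x_k \to x$ is not actually needed for the identity itself; it only reflects the context in which the lemma is applied.
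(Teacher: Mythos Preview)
Your proof is correct and follows essentially the same route as the paper's. Both directions proceed identically in spirit: for ``$\supseteq$'' you bound $\inner{v}{w_{k_j}}$ via Proposition~\ref{prop:normprojtocone} and pass to the limit, and for ``$\subseteq$'' you argue by contradiction, extract a convergent sequence of unit directions using compactness of the unit sphere, and show the limit lies in $K_\infty$ with $\inner{v}{z_*} > 0$. The paper's only cosmetic difference is that it works directly with the projection $u_k = \Proj_{K_k}(v)$ and normalizes it (using $\inner{v}{u_k} = \|u_k\|^2$ from Proposition~\ref{prop:normprojtocone}) rather than invoking the max formula to produce a maximizing unit vector; these amount to the same object. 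Your closing remark that $x_k \to x$ is not used in the identity itself is also accurate.
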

\begin{proof}
Suppose $v\in (\limsup_k\T_{x_k}\calX)^{\circ}$. Let $u_k = \Proj_{\T_{x_k}\calX}(v)$. If $u_k\not\to 0$, we can find a subsequence $(u_{k_i})_{i\geq 1}$ such that $\|u_{k_i}\|\geq r>0$ for all $i$ and such that $\frac{u_{k_i}}{\|u_{k_i}\|}$ converges to some $\hat u$. Because the $\T_{x_k}\calX$ are cones, we have $\frac{u_{k_i}}{\|u_{k_i}\|}\in \T_{x_{k_i}}\calX$ for all $i$, so $\hat u\in \limsup_k\T_{x_k}\calX$ by Definition~\ref{def:lims_of_sets}. Moreover, we have 
$$\langle v,\hat u\rangle = \lim_i\frac{\langle v, u_{k_i}\rangle}{\|u_{k_i}\|} = \lim_i\|u_{k_i}\|\geq r>0,$$ 
since $\langle v,u_{k_i}\rangle=\|u_{k_i}\|^2$ by Proposition~\ref{prop:normprojtocone}. This contradicts $v\in (\limsup_k\T_{x_k}\calX)^{\circ}$, so $u_k\to 0$. 

Conversely, suppose $\lim_k\|\Proj_{\T_{x_k}\calX}(v)\|=0$. For any $u\in \limsup_k\T_{x_k}\calX$, write $u=\lim_iu_{k_i}$ where $u_{k_i}\in \T_{x_{k_i}}\calX$. If $u=0$ we have $\langle v,u\rangle=0$. Otherwise, by passing to a subsequence again we may assume $u_{k_i}\neq 0$ for all $i$. We then have
$$ \langle v, u\rangle = \lim_i\langle v, u_{k_i}\rangle = \lim_i\|u_{k_i}\|\inner{v}{\frac{u_{k_i}}{\|u_{k_i}\|}}\leq \limsup_i\|u_{k_i}\|\cdot\|\text{Proj}_{\T_{x_{k_i}}}(v)\| = 0,$$ 
where for the last inequality we used Proposition~\ref{prop:normprojtocone} again. This shows that $v\in (\limsup_k\T_{x_k}\calX)^{\circ}$.
\end{proof}

We are now ready to prove the theorem:
\begin{proof}[Proof of Theorem~\ref{thm:apocalypse_char}]
Let $(x, (x_k)_{k\geq 1},f)$ be an apocalypse.
By Proposition~\ref{prop:normprojtocone}, 
\begin{align*}
    \|\Proj_{\T_{x_k}\calX}(-\nabla f(x))\| \leq \|\Proj_{\T_{x_k}\calX}(-\nabla f(x_k))\| + \|\nabla f(x)-\nabla f(x_k)\| \xrightarrow{k\to\infty} 0.
\end{align*}
By Lemma~\ref{lem:polar_of_limsup}, we then have $-\nabla f(x) \in (\limsup_k\T_{x_k}\calX)^{\circ}$. On the other hand, because $\Proj_{\T_x\calX}(-\nabla f(x))\neq 0$ we have $-\nabla f(x)\notin (\T_x\calX)^{\circ}$ by Proposition~\ref{prop:normprojtocone}, hence $-\nabla f(x)\in (\limsup_k\T_{x_k}\calX)^{\circ}\setminus (\T_x\calX)^{\circ}$. Conversely, if $-\nabla f(x)\in (\limsup_i\T_{x_k}\calX)^{\circ}\setminus(\T_x\calX)^{\circ}$ then Proposition~\ref{prop:normprojtocone} and~\ref{lem:polar_of_limsup} give
\begin{align*}
    \|\Proj_{\T_{x_k}\calX}(-\nabla f(x_k))\|\leq \|\Proj_{\T_{x_k}}(-\nabla f(x))\| + \|\nabla f(x)-\nabla f(x_k)\|\xrightarrow{k\to\infty}0,
\end{align*}
while $\Proj_{\T_x\calX}(-\nabla f(x))\neq0$, hence $(x,(x_k)_{k\geq 1},f)$ is an apocalypse.

For the second claim, 
if $x$ is apocalyptic, then by definition there exists a sequence $(x_k)_{k\geq 1}\subseteq\calX$ converging to $x$ and a $\mc C^1$ function $f$ such that $(x,(x_k)_{k\geq 1},f)$ is an apocalypse. 
The first part of the proof shows that $-\nabla f(x)\in \left(\limsup_k\T_{x_k}\calX\right)^{\circ}\setminus(\T_x\calX)^{\circ}$.
Conversely, suppose $(\limsup_k\T_{x_k}\calX)^{\circ}\not\subseteq (\T_x\calX)^{\circ}$ for some sequence $(x_k)_{k\geq 1}$ converging to $x$, and let $-v\in \left(\limsup_k \T_{x_k}\calX\right)^{\circ}\setminus(\T_x\calX)^{\circ}$. 
Define $f(x)=\langle x,v\rangle$, and note that $\Proj_{\T_{x_k}}(-\nabla f(x_k)) = \Proj_{\T_{x_k}}(-v)\to 0$ by Lemma~\ref{lem:polar_of_limsup}. 
However, $\Proj_{\T_x\calX}(-\nabla f(x)) = \Proj_{\T_x\calX}(-v) \neq 0$ because $-v\notin (\T_x\calX)^{\circ}$. 
Therefore, the triplet $(x, (x_k)_{k\geq 1}, f)$ is an apocalypse on $\calX$, so $x$ is apocalyptic.

Finally, by Lemma~\ref{lem:coneproperties}, $\left(\limsup_k\T_{x_k}\calX\right)^{\circ}\not\subseteq (\T_x\calX)^{\circ}$ if and only if \\ $\overline{\mathrm{conv}}(\T_x\calX)\not\subseteq \overline{\tt{conv}}(\limsup_k\T_{x_k}\calX)$, which is equivalent to
$\T_x\calX\not\subseteq \overline{\tt{conv}}(\limsup_k\T_{x_k}\calX)$.
\end{proof} 
\section{Analysis of Algorithm~\ref{algo:TRbalanced}} \label{appdx:hookedRTR}
We analyze Algorithm~\ref{algo:TRbalanced} by following in the steps of Curtis et al.~\cite{curtis2018concise} and checking that the modifications we have made do not impact the key claims.
The first lemma below guarantees each step provides non-trivial decrease in the model value.
\begin{lemma} \label{lem:optimlemma0}
	For all $k$, the step $s_k$ chosen in Step~\ref{step:sk} of Algorithm~\ref{algo:TRbalanced} satisfies
	\begin{align}
		m_k(0) - m_k(s_k) & \geq \begin{cases}
			\frac{1}{2} \min\!\left( \frac{1}{1+\|\nabla^2 \hat g_k(0)\|}, \gamma_k \right) \|\nabla g(y_k)\|^2 & \textrm{ if } k \in \calK_1, \\
			\frac{1}{2} \gamma_k^2 |(\lambda_k)_-|^3 & \textrm{ if } k \in \calK_2.
		\end{cases}
		\label{eq:optimdecrease}
	\end{align}
\end{lemma}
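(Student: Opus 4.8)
The plan is to reduce the claim to a lower bound on the model decrease achieved by the \emph{Cauchy step} $s_k^c$ from~\eqref{eq:skc}. Indeed, Step~\ref{step:sk} of Algorithm~\ref{algo:TRbalanced} imposes $m_k(s_k) \le m_k(s_k^c)$, so $m_k(0) - m_k(s_k) \ge m_k(0) - m_k(s_k^c)$, and it suffices to establish~\eqref{eq:optimdecrease} with $s_k^c$ in place of $s_k$. Write $H_k = \nabla^2\hat g_k(0)$ (with $\|H_k\|$ its operator norm), $g_k' = \nabla g(y_k) = \nabla\hat g_k(0)$, and for a fixed direction $u \in \T_{y_k}\calM$ let $\phi_u(t) = m_k(tu) - m_k(0) = t\langle u, g_k'\rangle + \tfrac12 t^2\langle u, H_k u\rangle$. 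By construction $s_k^c = t_k u_k$ with $t_k$ a minimizer of $\phi_{u_k}$ over $\{t \ge 0 : t\|u_k\| \le \delta_k\}$, so $m_k(0) - m_k(s_k^c) = -\min_t \phi_{u_k}(t)$ over that interval. The rest is the classical Cauchy-point estimate carried out separately for the two iteration types; the manifold and hook modifications are irrelevant here since $m_k$ and $s_k^c$ live entirely on the Euclidean space $\T_{y_k}\calM$.

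\textbf{Iterations $k \in \calK_1$.} Here $u_k = -g_k'$, so $\|u_k\| = \|g_k'\|$, $\langle u_k, g_k'\rangle = -\|g_k'\|^2$, $\delta_k = \gamma_k\|g_k'\|$, and (assuming $g_k' \ne 0$, the bound being trivial otherwise) the admissible set for $t$ is $[0, \gamma_k]$. One distinguishes $\langle u_k, H_k u_k\rangle \le 0$ from $\langle u_k, H_k u_k\rangle > 0$. In the first case $\phi_{u_k}$ is non-increasing, so its minimum over $[0,\gamma_k]$ is at $t = \gamma_k$ and $-\phi_{u_k}(\gamma_k) \ge \gamma_k\|g_k'\|^2$. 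In the second case $\langle u_k, H_k u_k\rangle \le \|H_k\|\|g_k'\|^2$, so the unconstrained minimizer $t^\star = \|g_k'\|^2/\langle u_k, H_k u_k\rangle$ satisfies $t^\star \ge 1/\|H_k\|$; if $t^\star \le \gamma_k$ then $-\phi_{u_k}(t^\star) = \tfrac12\|g_k'\|^4/\langle u_k, H_k u_k\rangle \ge \tfrac{1}{2}\|g_k'\|^2/\|H_k\|$, and if $t^\star > \gamma_k$ then using $\gamma_k\langle u_k, H_k u_k\rangle < \|g_k'\|^2$ gives $-\phi_{u_k}(\gamma_k) \ge \tfrac12\gamma_k\|g_k'\|^2$. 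Taking the worst of these sub-cases, and noting that $\langle u_k, H_k u_k\rangle > 0$ forces $\|H_k\| > 0$ so that $1/\|H_k\| \ge 1/(1+\|H_k\|)$ is valid there, while $\|H_k\| = 0$ falls under the first sub-case, one obtains $m_k(0) - m_k(s_k^c) \ge \tfrac12\min\!\big(1/(1+\|H_k\|),\, \gamma_k\big)\|g_k'\|^2$, as required.

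\textbf{Iterations $k \in \calK_2$.} Here $u_k = e_k$ is a unit-norm eigenvector of $H_k$ for its least eigenvalue $\lambda_k$, which is negative; since $(\lambda_k)_- = \lambda_k$ we have $|(\lambda_k)_-| = -\lambda_k = -\langle e_k, H_k e_k\rangle$ and $\delta_k = -\gamma_k\lambda_k$; moreover the sign of $e_k$ is fixed so that $\langle e_k, g_k'\rangle \le 0$, and $\|e_k\| = 1$ makes the admissible set $[0,\delta_k]$. Then $\phi_{e_k}(t) = t\langle e_k, g_k'\rangle + \tfrac12 t^2\lambda_k$ has both coefficients non-positive, hence is non-increasing on $[0,\infty)$, so its minimum over $[0,\delta_k]$ is at $t = \delta_k$; dropping the non-positive linear term,
\[
    m_k(0) - m_k(s_k^c) = -\phi_{e_k}(\delta_k) \ge -\tfrac12\delta_k^2\lambda_k = \tfrac12\gamma_k^2\lambda_k^2(-\lambda_k) = \tfrac12\gamma_k^2|(\lambda_k)_-|^3.
\]

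I do not anticipate any genuinely hard step: the argument is essentially that of Curtis et al.~\cite{curtis2018concise}. The only parts requiring care are the case analysis in $\calK_1$ — in particular getting the denominator $1+\|H_k\|$ (rather than $\|H_k\|$) and the two sub-case thresholds right so that the bound is robust when $H_k$ has a zero or nonnegative smallest eigenvalue — and, in $\calK_2$, correctly invoking the sign convention $\langle e_k, g_k'\rangle \le 0$ together with $\lambda_k < 0$ to conclude that $\phi_{e_k}$ is monotone and that the linear term only helps. Everything else is elementary one-variable calculus.
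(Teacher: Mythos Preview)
Your proposal is correct and follows exactly the approach the paper sketches: reduce to the Cauchy step via $m_k(s_k)\le m_k(s_k^c)$ and carry out the one-dimensional quadratic analysis on $\T_{y_k}\calM$ separately for $\calK_1$ and $\calK_2$, precisely as in~\cite[Lem.~2.2]{curtis2018concise}. Your case analysis and the handling of the $1+\|H_k\|$ denominator are accurate.
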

\begin{proof}
	This is exactly~\cite[Lem.~2.2]{curtis2018concise}.
    The model $m_k$
    is quadratic on a linear space ($\T_{y_k}\calM$): it suffices to analyze the Cauchy step explicitly, then to note that $m_k(s_k) \leq m_k(s_k^c)$, using our explicit knowledge of $\delta_k$ for the two types of steps ($\calK_1, \calK_2$).
\end{proof}

Moreover, an iterate $y_k$ is 2-critical if and only if $\nabla g(y_k) = 0$ and $\lambda_k \geq 0$.
\begin{lemma} \label{lem:optim2criticalcharact}
	For any retraction $\Retr$, a point $y \in \calM$ is 2-critical for $\min_{y\in\calM} g(y)$ if and only if $\nabla g(y) = 0$ and $\lambdamin(\nabla^2 (g \circ \Retr_y)(0)) \geq 0$.
\end{lemma}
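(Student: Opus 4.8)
The plan is to reduce the curve-based Definition~\ref{def:2criticalM} to ordinary calculus on the Euclidean space $\T_y\calM$, exploiting that a retraction is a local diffeomorphism near the origin of each tangent space. Indeed, the defining property of $\Retr$ --- that $t \mapsto \Retr_y(tu)$ has velocity $u$ at $t = 0$ --- says precisely that $\D\Retr_y(0) = \id_{\T_y\calM}$, so by the inverse function theorem $\Retr_y$ restricts to a smooth diffeomorphism from a neighborhood of $0 \in \T_y\calM$ onto a neighborhood of $y \in \calM$. Write $\hat g_y = g \circ \Retr_y$, a function on the Euclidean space $\T_y\calM$, and recall (as already noted in the excerpt) that $\nabla \hat g_y(0) = \grad g(y)$.

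First I would dispatch the first-order part. For any smooth curve $c$ with $c(0) = y$ we have $(g \circ c)'(0) = \D g(y)[c'(0)] = \inner{\grad g(y)}{c'(0)}$, and conversely every $v \in \T_y\calM$ is realized as $c'(0)$ by the curve $c(t) = \Retr_y(tv)$. Hence the condition ``$(g \circ c)'(0) = 0$ for all such $c$'' is equivalent to $\grad g(y) = 0$, i.e., $\nabla g(y) = 0$.

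Next, \emph{assuming} $\grad g(y) = 0$, I would analyze the second-order part. Fix a smooth curve $c$ with $c(0) = y$ and set $\gamma = \Retr_y^{-1} \circ c$, which is smooth and well defined for $t$ near $0$ by the local-diffeomorphism property, with $\gamma(0) = 0$ and $\gamma'(0) = (\D\Retr_y(0))^{-1}[c'(0)] = c'(0)$. Since $g \circ c = \hat g_y \circ \gamma$ is a composition of maps on Euclidean spaces, the chain rule gives $(g \circ c)''(0) = \inner{\nabla^2 \hat g_y(0)[\gamma'(0)]}{\gamma'(0)} + \inner{\nabla \hat g_y(0)}{\gamma''(0)} = \inner{\nabla^2 \hat g_y(0)[c'(0)]}{c'(0)}$, where the second term dropped because $\nabla \hat g_y(0) = \grad g(y) = 0$. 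As $c'(0)$ ranges over all of $\T_y\calM$, the condition ``$(g \circ c)''(0) \geq 0$ for all such $c$'' is therefore equivalent to $\nabla^2 \hat g_y(0) \succeq 0$, i.e., $\lambdamin(\nabla^2(g \circ \Retr_y)(0)) \geq 0$. Combining the two equivalences proves the lemma in both directions.

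The step I expect to be the only real (and still minor) subtlety is the passage from an arbitrary curve $c$ on $\calM$ to the companion curve $\gamma$ in the tangent space: a general curve through $y$ need not have the special form $t \mapsto \Retr_y(tv)$, so one genuinely needs the invertibility of $\Retr_y$ near $0$ to define $\gamma$, after which the computation is just the Euclidean chain rule. It is worth noting that \emph{no} assumption that $\Retr$ is a second-order retraction is needed here: the usual discrepancy between $\nabla^2 \hat g_y(0)$ and the Riemannian Hessian $\Hess g(y)$ is proportional to $\grad g(y)$, which vanishes exactly at the points relevant to this lemma.
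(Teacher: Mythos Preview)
Your proof is correct. You work directly from the curve-based Definition~\ref{def:2criticalM}, pulling an arbitrary curve $c$ back through the local inverse of $\Retr_y$ to a curve $\gamma$ in $\T_y\calM$, and then use the Euclidean chain rule; the key observation that the $\inner{\nabla \hat g_y(0)}{\gamma''(0)}$ term drops is exactly what makes the argument independent of whether $\Retr$ is second order.

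The paper's own proof is a one-liner that takes a different route: it invokes the characterization, stated right after Definition~\ref{def:2criticalM}, that 2-criticality is equivalent to $\nabla g(y)=0$ and $\nabla^2 g(y)\succeq 0$ in terms of the \emph{Riemannian} Hessian, and then cites~\cite[Prop.~5.5.6]{absil_book} to say that $\nabla g(y)=0$ forces $\nabla^2(g\circ\Retr_y)(0)=\nabla^2 g(y)$ for any retraction. Your argument essentially re-derives the content of that proposition in the special case needed here, without ever introducing the Riemannian Hessian. The paper's version is shorter and leans on a standard reference; yours is more self-contained and makes transparent why the local-diffeomorphism property of $\Retr_y$ is the only structural ingredient required.
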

\begin{proof}
	This holds because $\nabla g(y) = 0$ implies $\nabla^2(g \circ \Retr_y)(0) = \nabla^2 g(y)$, even if the retraction is not second order~\cite[Prop.~5.5.6]{absil_book}.
\end{proof}

In the computation of $\rho_k$ in Step~\ref{step:rhok}, we handle the special (if unlikely) case of a vanishing denominator.
The following lemma justifies the proposed rule, in that it can be used to check that all claims made below remain valid in that corner case.
\begin{lemma}
    Following
    Step~\ref{step:sk} in Algorithm~\ref{algo:TRbalanced}, the following are equivalent:
	\begin{align*}
		(a) \ s_k = 0, && (b) \ m_k(0) - m_k(s_k) = 0, && (c) \ y_k \textrm{ is 2-critical}.
	\end{align*}
\end{lemma}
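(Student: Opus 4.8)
The plan is to prove the three-way equivalence by a short cycle of implications, (a) $\Rightarrow$ (b) $\Rightarrow$ (c) $\Rightarrow$ (a), relying on the two lemmas already at hand: Lemma~\ref{lem:optimlemma0} (the guaranteed model decrease) and Lemma~\ref{lem:optim2criticalcharact} (which identifies 2-criticality of $y_k$ with $\nabla g(y_k) = 0$ together with $\lambda_k \geq 0$). The implication (a) $\Rightarrow$ (b) is immediate, since $s_k = 0$ gives $m_k(s_k) = m_k(0)$.

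For (b) $\Rightarrow$ (c), I would assume $m_k(0) - m_k(s_k) = 0$ and split on the iteration type. If $k \in \calK_2$, the second branch of Lemma~\ref{lem:optimlemma0} gives $0 \geq \tfrac12 \gamma_k^2 |(\lambda_k)_-|^3$; since $\gamma_k > 0$ this forces $(\lambda_k)_- = 0$, i.e. $\lambda_k \geq 0$, contradicting the defining inequality $\lambda_k < 0$ of $\calK_2$. Hence $k \in \calK_1$, and the first branch of Lemma~\ref{lem:optimlemma0}, whose coefficient $\tfrac12 \min\!\big( \tfrac{1}{1 + \|\nabla^2 \hat g_k(0)\|}, \gamma_k \big)$ is strictly positive, forces $\nabla g(y_k) = 0$. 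Finally, $k \in \calK_1$ means the ``if'' test of the algorithm fails, so either $\lambda_k \geq 0$ outright, or $\|\nabla g(y_k)\|^2 \geq |(\lambda_k)_-|^3$; in the latter case $\nabla g(y_k) = 0$ makes the right-hand side vanish, forcing $\lambda_k \geq 0$ again. In both cases $\nabla g(y_k) = 0$ and $\lambda_k \geq 0$, so Lemma~\ref{lem:optim2criticalcharact} gives that $y_k$ is 2-critical.

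For (c) $\Rightarrow$ (a), suppose $y_k$ is 2-critical. Lemma~\ref{lem:optim2criticalcharact} gives $\nabla g(y_k) = 0$ and $\lambda_k \geq 0$, hence $(\lambda_k)_- = 0$; the ``if'' test then fails, so $k \in \calK_1$ and $\delta_k = \gamma_k \|\nabla g(y_k)\| = 0$. Since Step~\ref{step:sk} requires $\|s_k\| \leq \delta_k$, we conclude $s_k = 0$.

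The one place needing genuine care is (b) $\Rightarrow$ (c): a $\calK_1$ iteration need not satisfy $\lambda_k \geq 0$ a priori, so after deducing $\nabla g(y_k) = 0$ it is essential to revisit the defining inequality of $\calK_1$ to rule out $\lambda_k < 0$. The rest is routine bookkeeping — keeping the coefficients in Lemma~\ref{lem:optimlemma0} strictly positive (which only uses $\gamma_k > 0$ and finiteness of $\|\nabla^2 \hat g_k(0)\|$) and appealing to Lemma~\ref{lem:optim2criticalcharact} rather than unwinding the curve definition of 2-criticality.
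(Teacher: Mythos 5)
Your proposal is correct and follows essentially the same route as the paper: it establishes the same cycle of implications, using Lemma~\ref{lem:optimlemma0} to rule out $k \in \calK_2$ and to force $\nabla g(y_k) = 0$, then revisiting the defining condition of $\calK_1$ to deduce $\lambda_k \geq 0$, and finally appealing to Lemma~\ref{lem:optim2criticalcharact}. The only cosmetic difference is the order in which the implications are stated.
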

\begin{proof}
	It is clear that $(a)$ implies $(b)$.
	We also see that $(c)$ implies $(a)$, because when $\lambda_k \geq 0$ and $\|\nabla g(y_k)\| = 0$ we necessarily have $\delta_k = 0$, so that $s_k = 0$ is the only admissible trial step.
	It remains to show that $(b)$ implies $(c)$.
	Assuming $(b)$, notice that $k$ is in $\calK_1$; indeed, $k \in \calK_2$ would imply $\lambda_k < 0$, in which case we would know from Lemma~\ref{lem:optimlemma0} that $m_k(0) - m_k(s_k) > 0$.
	Knowing that $k$ is in $\calK_1$, we use Lemma~\ref{lem:optimlemma0} again to infer from $(b)$ that $\|\nabla g(y_k)\| = 0$. 
	Then, combining the facts $\|\nabla g(y_k)\| = 0$ and $k \notin \calK_2$, we deduce that $\lambda_k \geq 0$ or $|(\lambda_k)_-|^3 \leq 0$.
	In all cases, it follows that $\lambda_k \geq 0$.
	Conclude with Lemma~\ref{lem:optim2criticalcharact}.
\end{proof}

Under the Lipschitz-type Assumption~\ref{assu:upstairs}, we further show:
\begin{lemma} \label{lem:optimlemma1}
	With Assumption~\ref{assu:upstairs}, for all $k$, the step $s_k$ from Step~\ref{step:sk} satisfies
	\begin{align}
		|g(\Retr_{y_k}(s_k)) - m_k(s_k)| & \leq \begin{cases}
			L_1 \gamma_k^2 \|\nabla g(y_k)\|^2 & \textrm{ if } k \in \calK_1, \\
			\frac{1}{6} L_2 \gamma_k^3 |(\lambda_k)_-|^3 & \textrm{ if } k \in \calK_2.
		\end{cases}
	\end{align}
\end{lemma}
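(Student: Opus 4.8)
The plan is to derive both bounds directly from Assumption~\ref{assu:upstairs}, using only the two step-size rules in Steps~\ref{step:deltalambda} and~\ref{step:deltagrad} of Algorithm~\ref{algo:TRbalanced} together with the constraint $\|s_k\| \leq \delta_k$ imposed in Step~\ref{step:sk}. The key observation is that $\nabla \hat g_k(0) = \nabla g(y_k)$, so the quadratic model obeys $m_k(s_k) = g(y_k) + \inner{s_k}{\nabla g(y_k)} + \tfrac12\inner{s_k}{\nabla^2\hat g_k(0)[s_k]}$; hence $g(\Retr_{y_k}(s_k)) - m_k(s_k)$ is exactly the second-order Taylor remainder of the pullback $\hat g_k$ at $0$ evaluated at $s_k$. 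Consequently the second inequality in Assumption~\ref{assu:upstairs} is literally $|g(\Retr_{y_k}(s_k)) - m_k(s_k)| \leq \tfrac{L_2}{6}\|s_k\|^3$, valid for every $k$.

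\textbf{Case $k \in \calK_2$.} On such an iteration $\delta_k = \gamma_k |(\lambda_k)_-|$ by Step~\ref{step:deltalambda}, so $\|s_k\| \leq \gamma_k |(\lambda_k)_-|$ and therefore $\|s_k\|^3 \leq \gamma_k^3 |(\lambda_k)_-|^3$. Substituting into the remainder bound above yields $|g(\Retr_{y_k}(s_k)) - m_k(s_k)| \leq \tfrac{L_2}{6}\gamma_k^3 |(\lambda_k)_-|^3$, which is the claimed estimate.

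\textbf{Case $k \in \calK_1$.} Here I would instead split the error and use the first inequality of Assumption~\ref{assu:upstairs} together with the operator-norm bound $\|\nabla^2 \hat g_k(0)\| \leq L_1$ from the same assumption:
\begin{align*}
	|g(\Retr_{y_k}(s_k)) - m_k(s_k)| &\leq \bigl| g(\Retr_{y_k}(s_k)) - g(y_k) - \inner{s_k}{\nabla g(y_k)} \bigr| + \tfrac12 \bigl| \inner{s_k}{\nabla^2\hat g_k(0)[s_k]} \bigr| \\
	&\leq \tfrac{L_1}{2}\|s_k\|^2 + \tfrac12 \|\nabla^2\hat g_k(0)\|\, \|s_k\|^2 \leq L_1 \|s_k\|^2.
\end{align*}
Since $\delta_k = \gamma_k \|\nabla g(y_k)\|$ on a $\calK_1$-iteration (Step~\ref{step:deltagrad}) and $\|s_k\| \leq \delta_k$, we have $\|s_k\|^2 \leq \gamma_k^2 \|\nabla g(y_k)\|^2$, hence $|g(\Retr_{y_k}(s_k)) - m_k(s_k)| \leq L_1 \gamma_k^2 \|\nabla g(y_k)\|^2$, as required.

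I do not anticipate a genuine obstacle: this is simply the Riemannian restatement of the corresponding lemma of Curtis et al.~\cite{curtis2018concise}. The only point requiring care is that Assumption~\ref{assu:upstairs} is phrased directly in terms of the pullback $\hat g_k = g \circ \Retr_{y_k}$ and the steps $s_k$ actually generated by Algorithm~\ref{algo:TRbalanced}, so no additional regularity of the retraction (such as being second order) is needed for the argument to go through.
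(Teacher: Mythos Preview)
Your proof is correct and follows exactly the approach the paper sketches: use the two Lipschitz-type inequalities and the operator-norm bound from Assumption~\ref{assu:upstairs}, then plug in the explicit trust-region radii $\delta_k = \gamma_k\|\nabla g(y_k)\|$ for $\calK_1$ and $\delta_k = \gamma_k|(\lambda_k)_-|$ for $\calK_2$. The paper merely cites~\cite[Lem.~2.3]{curtis2018concise} and names these ingredients without spelling out the computation you have written.
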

\begin{proof}
	This is~\cite[Lem.~2.3]{curtis2018concise}.
	We use the Lipschitz-type inequalities provided by Assumption~\ref{assu:upstairs} and our explicit knowledge of $\delta_k$ for the two types of steps ($\calK_1, \calK_2$).
\end{proof}
Recall the definitions of $\gammamin$~\eqref{eq:gammamin} and $\kappamin$~\eqref{eq:kappamin}---we use them going forward.
\begin{lemma} \label{lem:optimgammamin}
	Under Assumption~\ref{assu:upstairs}, we have $\gamma_k \geq \gammamin$ for all $k$.
\end{lemma}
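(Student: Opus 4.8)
The plan is to prove $\gamma_k \geq \gammamin$ by induction on $k$, exploiting that $\gamma_k$ can only decrease on \emph{rejected} iterations, where it is multiplied by $\gamma_c \in (0,1)$, whereas on \emph{accepted} iterations it is reset to some value in $[\underline{\gamma},\overline{\gamma}]$. The base case holds since $\gamma_0 \in [\underline{\gamma},\overline{\gamma}]$ and $\gammamin \leq \underline{\gamma}$ by~\eqref{eq:gammamin}. For the inductive step, suppose $\gamma_k \geq \gammamin$. If the $k$th step is accepted then $\gamma_{k+1} \in [\underline{\gamma},\overline{\gamma}]$, so $\gamma_{k+1} \geq \underline{\gamma} \geq \gammamin$; if it is rejected then $\gamma_{k+1} = \gamma_c \gamma_k$, and provided $\gamma_k \geq \gammamin/\gamma_c$ we still get $\gamma_{k+1} \geq \gammamin$. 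The only dangerous configuration is therefore a rejected step with $\gammamin \leq \gamma_k < \gammamin/\gamma_c$, and the whole argument reduces to ruling this out: I would show that $\gamma_k < \gammamin/\gamma_c$ forces $\rho_k \geq \eta$, i.e., the step is accepted, a contradiction.

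To show $\rho_k \geq \eta$, first note that if the denominator $m_k(0) - m_k(s_k)$ vanishes then (as established above) $y_k$ is 2-critical, hence $\delta_k = 0$, $s_k = 0$, and $\rho_k = 1 \geq \eta$ by the convention in Step~\ref{step:rhok}. Otherwise, since $m_k(0) = g(y_k)$, one has
\[
    \rho_k = 1 - \frac{g(\Retr_{y_k}(s_k)) - m_k(s_k)}{m_k(0) - m_k(s_k)},
\]
so it suffices to check $|g(\Retr_{y_k}(s_k)) - m_k(s_k)| \leq (1-\eta)(m_k(0) - m_k(s_k))$. I would then split on the step type. For $k \in \calK_1$ (and $\nabla g(y_k) \neq 0$, the remaining case being the degenerate one above), Lemma~\ref{lem:optimlemma0} with $\|\nabla^2\hat g_k(0)\| \leq L_1$ and $\gamma_k < \tfrac{1}{1+L_1}$ gives $m_k(0) - m_k(s_k) \geq \tfrac12 \gamma_k \|\nabla g(y_k)\|^2$, while Lemma~\ref{lem:optimlemma1} gives $|g(\Retr_{y_k}(s_k)) - m_k(s_k)| \leq L_1 \gamma_k^2 \|\nabla g(y_k)\|^2$, and the desired inequality follows from $\gamma_k < \tfrac{1-\eta}{2 L_1}$. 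For $k \in \calK_2$, Lemmas~\ref{lem:optimlemma0} and~\ref{lem:optimlemma1} give $m_k(0) - m_k(s_k) \geq \tfrac12 \gamma_k^2 |(\lambda_k)_-|^3$ and $|g(\Retr_{y_k}(s_k)) - m_k(s_k)| \leq \tfrac16 L_2 \gamma_k^3 |(\lambda_k)_-|^3$, and the inequality follows from $\gamma_k < \tfrac{3(1-\eta)}{L_2}$. The three smallness bounds used here are exactly $\gammamin/\gamma_c \leq \tfrac{1}{1+L_1}$, $\gammamin/\gamma_c \leq \tfrac{1-\eta}{2L_1}$ and $\gammamin/\gamma_c \leq \tfrac{3(1-\eta)}{L_2}$, all of which are built into the definition~\eqref{eq:gammamin} of $\gammamin$; hence $\gamma_k < \gammamin/\gamma_c$ supplies each of them.

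This is the analogue of the corresponding lemma of Curtis et al.~\cite{curtis2018concise}, and neither the Riemannian extension nor the hook $\Phi$ interferes: the hook only acts on accepted steps (where $\gamma_{k+1}$ is anyway free in $[\underline{\gamma},\overline{\gamma}]$), and all quantities above are read off the pullback $\hat g_k = g \circ \Retr_{y_k}$. I expect no genuine obstacle here---only careful bookkeeping: matching each of the four arguments of the minimum in~\eqref{eq:gammamin} to the inequality it secures, observing that $\min(\tfrac{1}{1+L_1},\gamma_k)$ in Lemma~\ref{lem:optimlemma0} collapses to $\gamma_k$ under the smallness hypothesis, and disposing of the zero-gradient / zero-denominator corner cases so that the quotient defining $\rho_k$ is legitimate.
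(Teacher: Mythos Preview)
Your proposal is correct and follows exactly the approach the paper indicates: the paper's proof simply cites \cite[Lem.~2.4]{curtis2018concise} and says the claim holds ``by induction relying on Lemmas~\ref{lem:optimlemma0} and~\ref{lem:optimlemma1},'' which is precisely the argument you spell out. Your case split on $\calK_1$ versus $\calK_2$, the use of $\|\nabla^2\hat g_k(0)\|\leq L_1$ from Assumption~\ref{assu:upstairs}, and the matching of the four terms in~\eqref{eq:gammamin} to the needed smallness bounds are all as intended.
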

\begin{proof}
	This is~\cite[Lem.~2.4]{curtis2018concise},
    by induction relying on
    Lemmas~\ref{lem:optimlemma0} and~\ref{lem:optimlemma1}.
\end{proof}

Under the additional Assumption~\ref{assu:Phidecrease} on the rebalancing map, we have:
\begin{lemma} \label{lem:optimlemma2}
	Under Assumption~\ref{assu:upstairs}, for all $k$, the trial step $s_k$ satisfies
	\begin{align}
		m_k(0) - m_k(s_k) \geq \begin{cases}
			\frac{1}{2} \gammamin \|\nabla g(y_k)\|^2 & \textrm{ if } k \in \calK_1, \\
			\frac{1}{2} \gammamin^2 |(\lambda_k)_-|^3 & \textrm{ if } k \in \calK_2.
		\end{cases}
	\end{align}
	If $\Phi$ also satisfies Assumption~\ref{assu:Phidecrease}, then for all $k \in \calS = \{ k : \rho_k \geq \eta \}$ it holds that
	\begin{align}
		g(y_k) - g(y_{k+1}) \geq \begin{cases}
			\kappamin \|\nabla g(y_k)\|^2 & \textrm{ if } k \in \calK_1 \cap \calS, \\
			\kappamin |(\lambda_k)_-|^3   & \textrm{ if } k \in \calK_2 \cap \calS.
		\end{cases}
	\end{align}
	($\calS$ indexes the successful iterations.)
\end{lemma}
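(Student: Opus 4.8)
The plan is to chain together the results already in hand: Lemma~\ref{lem:optimlemma0} (the raw model decrease for the two step types), Lemma~\ref{lem:optimgammamin} ($\gamma_k \geq \gammamin$), and the operator-norm bound $\|\nabla^2 \hat g_k(0)\| \leq L_1$ from Assumption~\ref{assu:upstairs}, combined with elementary arithmetic on the constants $\gammamin \in (0,1)$ defined in~\eqref{eq:gammamin} and $\kappamin = \frac12 \eta \gammamin^2$ from~\eqref{eq:kappamin}. This is essentially~\cite[Lem.~2.5]{curtis2018concise}; the only new ingredient is the bookkeeping needed to see that inserting the hook $\Phi$ does not spoil the per-step decrease.

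For the first pair of inequalities (valid for all $k$), take $k \in \calK_1$. Lemma~\ref{lem:optimlemma0} gives $m_k(0) - m_k(s_k) \geq \tfrac12 \min\!\big( \tfrac{1}{1+\|\nabla^2 \hat g_k(0)\|}, \gamma_k \big) \|\nabla g(y_k)\|^2$. Since $\|\nabla^2 \hat g_k(0)\| \leq L_1$ we have $\tfrac{1}{1+\|\nabla^2 \hat g_k(0)\|} \geq \tfrac{1}{1+L_1} \geq \tfrac{\gamma_c}{1+L_1} \geq \gammamin$, using $\gamma_c \in (0,1)$ and the definition of $\gammamin$; and $\gamma_k \geq \gammamin$ by Lemma~\ref{lem:optimgammamin}. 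Hence the minimum is at least $\gammamin$, giving the claim. For $k \in \calK_2$, Lemma~\ref{lem:optimlemma0} gives $m_k(0) - m_k(s_k) \geq \tfrac12 \gamma_k^2 |(\lambda_k)_-|^3 \geq \tfrac12 \gammamin^2 |(\lambda_k)_-|^3$, again by Lemma~\ref{lem:optimgammamin}.

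For the second pair, fix $k \in \calS$, so $\rho_k \geq \eta$. If the denominator in~\eqref{eq:rhok} is zero, then $m_k(0) - m_k(s_k) = 0$, hence $y_k$ is 2-critical (by the preceding characterization lemma), so $\nabla g(y_k) = 0$ and $(\lambda_k)_- = 0$, and both claimed inequalities read $0 \geq 0$; so we may assume the denominator is positive. Then $g(y_k) - g(\Retr_{y_k}(s_k)) = \rho_k \big( m_k(0) - m_k(s_k) \big) \geq \eta \big( m_k(0) - m_k(s_k) \big)$. On a successful step $y_{k+1} = \Phi(\Retr_{y_k}(s_k))$, so Assumption~\ref{assu:Phidecrease} yields $g(y_{k+1}) \leq g(\Retr_{y_k}(s_k))$, whence $g(y_k) - g(y_{k+1}) \geq g(y_k) - g(\Retr_{y_k}(s_k)) \geq \eta \big( m_k(0) - m_k(s_k) \big)$. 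Combining with the first part and using $\gammamin \leq 1$ (so $\gammamin \geq \gammamin^2$): for $k \in \calK_1 \cap \calS$ we get $g(y_k) - g(y_{k+1}) \geq \tfrac12 \eta \gammamin \|\nabla g(y_k)\|^2 \geq \tfrac12 \eta \gammamin^2 \|\nabla g(y_k)\|^2 = \kappamin \|\nabla g(y_k)\|^2$, and for $k \in \calK_2 \cap \calS$ we get $g(y_k) - g(y_{k+1}) \geq \tfrac12 \eta \gammamin^2 |(\lambda_k)_-|^3 = \kappamin |(\lambda_k)_-|^3$.

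There is no genuine obstacle here: the argument is entirely routine once Lemmas~\ref{lem:optimlemma0} and~\ref{lem:optimgammamin} are available, and the care required is purely in tracking which of the four terms in the definition~\eqref{eq:gammamin} of $\gammamin$ is invoked at each step. The one conceptual point — and the reason the extension to a hook works at all — is that Assumption~\ref{assu:Phidecrease} lets us pass from the decrease at $\Retr_{y_k}(s_k)$ to the decrease at $y_{k+1} = \Phi(\Retr_{y_k}(s_k))$ for free, so the cost function keeps acting as a Lyapunov function despite the rebalancing.
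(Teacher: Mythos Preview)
Your proof is correct and matches the paper's approach: both derive the model-decrease bounds from Lemma~\ref{lem:optimlemma0} together with $\gamma_k \geq \gammamin$ (Lemma~\ref{lem:optimgammamin}) and the inequality $\gammamin \leq (1+L_1)^{-1}$, then pass to actual decrease via $\rho_k \geq \eta$ and Assumption~\ref{assu:Phidecrease}. The paper additionally cites Lemma~\ref{lem:optimlemma1}, but that lemma is only needed inside the proof of Lemma~\ref{lem:optimgammamin}, so your omission of it is harmless.
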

\begin{proof}
	This is~\cite[Lem.~2.5]{curtis2018concise}: it follows from Lemmas~\ref{lem:optimlemma0}, \ref{lem:optimlemma1} and~\ref{lem:optimgammamin} using $\gammamin \leq (1+L_1)^{-1}$. 
	The hook satisfies $g(y_{k+1}) = g(\Phi(\Retr_{y_k}(s_k))) \leq g(\Retr_{y_k}(s_k))$.
\end{proof}
\begin{lemma} \label{lem:optimlemma3}
	With Assumption~\ref{assu:upstairs}, Algorithm~\ref{algo:TRbalanced} performs at most $\ceil{\log_{\gamma_c}(\frac{\gammamin}{\overline{\gamma}})}$ unsuccessful iterations consecutively, where $k$ is an unsuccessful iteration if $\rho_k < \eta$.
\end{lemma}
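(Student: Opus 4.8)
The plan is to observe that this is the manifold-and-hook transplant of \cite[Lem.~2.6]{curtis2018concise}, and that the argument is entirely unaffected by our modifications: neither the hook $\Phi$ nor the retraction enters the update rule for the trust-region parameter $\gamma_k$, so the original bookkeeping applies verbatim. I would record the short argument for completeness.

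First I would pin down the $\gamma$-value at the start of a block of consecutive unsuccessful iterations. Whenever $k=0$ the input guarantees $\gamma_0\in[\underline\gamma,\overline\gamma]$, and whenever iteration $k-1$ is successful ($\rho_{k-1}\ge\eta$) Step~\ref{step:sucessrebalanced} sets $\gamma_k\in[\underline\gamma,\overline\gamma]$; in either case $\gamma_k\le\overline\gamma$. So the first iteration of any maximal run of consecutive unsuccessful iterations has $\gamma$-value at most $\overline\gamma$. Next, on an unsuccessful iteration the algorithm sets $\gamma_{k+1}=\gamma_c\gamma_k$ with $\gamma_c\in(0,1)$, so if iterations $k,k+1,\dots,k+j-1$ are all unsuccessful and $\gamma_k\le\overline\gamma$, then $\gamma_{k+j}=\gamma_c^{\,j}\gamma_k\le\gamma_c^{\,j}\overline\gamma$.

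The crux is then Lemma~\ref{lem:optimgammamin} (this is the only place Assumption~\ref{assu:upstairs} is needed here), which gives $\gamma_{k+j}\ge\gammamin$. Combining the two displays yields $\gamma_c^{\,j}\overline\gamma\ge\gammamin$, that is, $\gamma_c^{\,j}\ge\gammamin/\overline\gamma$. Since $0<\gamma_c<1$, the map $t\mapsto\log_{\gamma_c}t$ is decreasing, so applying it to both sides reverses the inequality and gives $j\le\log_{\gamma_c}\!\big(\gammamin/\overline\gamma\big)$. As $j$ is a nonnegative integer, $j\le\floor{\log_{\gamma_c}(\gammamin/\overline\gamma)}\le\ceil{\log_{\gamma_c}(\gammamin/\overline\gamma)}$, which is exactly the claimed bound on the length of a maximal run of unsuccessful iterations.

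As for difficulties: there are essentially none — this is routine bookkeeping. The only points to watch are (i) getting the direction of the inequality right when applying the decreasing map $\log_{\gamma_c}$, and (ii) that $\gammamin/\overline\gamma\le 1$ (so the logarithm is nonnegative and the bound is meaningful), which holds because $\gammamin\le\underline\gamma\le\overline\gamma$ by the definition~\eqref{eq:gammamin}. Finally, one should note that the lower bound $\gamma_k\ge\gammamin$ of Lemma~\ref{lem:optimgammamin} has already been proved under Assumption~\ref{assu:upstairs}, so invoking it here is legitimate.
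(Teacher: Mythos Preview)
Your proposal is correct and takes essentially the same approach as the paper: the paper's proof simply cites \cite[Lem.~2.6]{curtis2018concise} and notes that $\gamma_k$ stays in $[\gammamin,\overline{\gamma}]$ while each unsuccessful step shrinks it by the factor $\gamma_c$. You have merely unpacked that bookkeeping explicitly, which is fine and matches the intended argument exactly.
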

\begin{proof}
	See~\cite[Lem.~2.6]{curtis2018concise}.
	The claim holds because $\gamma_k$ remains in
    $[\gammamin, \overline{\gamma}]$ for all $k$ and because each unsuccessful step involves a reduction as $\gamma_{k+1} = \gamma_c \gamma_k$.
\end{proof}

We are now ready to prove Theorem~\ref{thm:optimtheorem1}:
\begin{proof}[Proof of Theorem~\ref{thm:optimtheorem1}]
	The first part is~\cite[Lem.~2.11, Thm.~2.12]{curtis2018concise}.
	The argument involves a standard telescoping sum resting on Lemma~\ref{lem:optimlemma2} to bound the number of successful iterations which do not meet the $\epsilon_1, \epsilon_2$ targets.
	Then, Lemma~\ref{lem:optimlemma3} is used to argue that each such successful iteration was preceded by at most $\ceil{\log_{\gamma_c}\!\left(\gammamin / \overline{\gamma} \right)}$ unsuccessful iterations (with the same gradient and Hessian).
	For the second part, we use that only a finite number of iterates fail to meet any given tolerances $\epsilon_1, \epsilon_2$.
	Thus, for arbitrary tolerances, eventually, all iterates comply.
	Hence, any accumulation point $y$ satisfies $\|\nabla g(y)\| = 0$ and $\lambdamin(\nabla^2(g \circ \Retr_y)(0)) \geq 0$.
	Conclude with Lemma~\ref{lem:optim2criticalcharact}.
\end{proof}

\subsection{Securing Assumption~\ref{assu:upstairs}}\label{apdx:lip_assump}
In this section, we show that Assumption~\ref{assu:upstairs} holds when $g$ is three times continuously-differentiable and the iterates of Algorithm~\ref{algo:TRbalanced} remains in a compact set. We guarantee the latter condition using rebalancing maps in Section~\ref{sec:rebalancing_maps}.
%
%
%
%
%
\begin{lemma} \label{lem:optimlipschitzlemma}
	Consider a retraction $\Retr$ on $\calM$, a compact subset $\calL \subseteq \calM$ and a continuous, nonnegative function $\deltamax \colon \calL \to \reals$.
	The set
	\begin{align}
		\calB & = \left\{ (y, u) \in \T\calM : y \in \calL \textrm{ and } \|u\| \leq \deltamax(y) \right\}
		\label{eq:calB}
	\end{align}
	is compact in the tangent bundle $\T\calM$ of $\calM$.
	Assume $g \colon \calM \to \reals$ is twice continuously differentiable. 
	For the pullback $\hat g_y = g \circ \Retr_y$
    there exists a constant $L_1$ such that
	\begin{align}
		\forall (y, u) \in \calB, && \left| g(\Retr_y(u)) - g(y) - \inner{u}{\nabla \hat g_y(0)} \right| & \leq \frac{L_1}{2} \|u\|^2,
		\label{eq:glip1}
	\end{align}
	and $\|\nabla^2 \hat g_y(0)\| \leq L_1$ for all $y \in \calL$. 
	If additionally $g$ is three times continuously differentiable, then there exists a constant $L_2$ such that
	\begin{align}
		\forall (y, u) \in \calB, && \left| g(\Retr_y(u)) - g(y) - \inner{u}{\nabla \hat g_y(0)} - \frac{1}{2} \inner{u}{\nabla^2 \hat g_y(0)[u]} \right| & \leq \frac{L_2}{6} \|u\|^3, \label{eq:glip2} \\
		&& \left\| \nabla \hat g_y(u) - \nabla \hat g_y(0) - \nabla^2 \hat g_y(0)[u] \right\| & \leq \frac{L_2}{2} \|u\|^2. \label{eq:glip2bis}
	\end{align}
\end{lemma}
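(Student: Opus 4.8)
The plan is to first establish that $\calB$ is compact, and then to derive each of the three estimates from a Taylor expansion of $t \mapsto g(\Retr_y(tu))$ along a ray, with the uniform constants $L_1, L_2$ produced as maxima of continuous functions on the compact set $\calB$. For compactness: since $\calL$ is compact and $\calM$ is a manifold, cover $\calL$ by finitely many open sets $V_1,\dots,V_N$, each with $\overline{V_i}$ compact and over which the tangent bundle trivializes, $\T\calM|_{V_i} \cong V_i \times \RR^d$ with $d = \dim\calM$. On the compact base $\overline{V_i}$ the Riemannian norm $\|\cdot\|_y$ is comparable to the coordinate Euclidean norm, and $\deltamax$ is bounded on $\calL$, so the part of $\calB$ lying over $\overline{V_i}$ is a closed subset of $\overline{V_i} \times \{\,v \in \RR^d : |v| \le C_i\,\}$ for a suitable radius $C_i$, hence compact; then $\calB$ is the union of these finitely many compact pieces. (Equivalently: $\calB$ is closed in $\T\calM$ and sequentially compact, by extracting a convergent subsequence of base points in $\calL$ and then a convergent subsequence of fiber components in a local trivialization.)

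For the inequalities involving $L_1$, fix $(y,u) \in \calB$ and set $h(t) = \hat g_y(tu) = g(\Retr_y(tu))$ on $[0,1]$. Since $g$ is twice continuously differentiable and $\Retr$ is smooth, $h$ is $\mathcal C^2$, with $h(0)=g(y)$, $h(1)=g(\Retr_y(u))$, $h'(0)=\inner{u}{\nabla \hat g_y(0)}$, and $h''(t)=\inner{u}{\nabla^2 \hat g_y(tu)[u]}$, where $\nabla^2 \hat g_y(v)$ is the Hessian of $\hat g_y$ on the inner-product space $\T_y\calM$. Taylor's theorem with Lagrange remainder gives $g(\Retr_y(u)) - g(y) - \inner{u}{\nabla \hat g_y(0)} = \tfrac12 h''(\xi)$ for some $\xi \in (0,1)$, and $|h''(\xi)| \le \|\nabla^2 \hat g_y(\xi u)\|\,\|u\|^2$. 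Since $\|\xi u\| \le \|u\| \le \deltamax(y)$, the pair $(y,\xi u)$ still lies in $\calB$; and $(y,v)\mapsto \|\nabla^2 \hat g_y(v)\|$ (operator norm induced by $\inner{\cdot}{\cdot}_y$) is continuous on $\T\calM$ — $g$ being $\mathcal C^2$, $\Retr$ smooth, the metric smooth — so it attains a finite maximum $L_1$ on the compact set $\calB$. This yields~\eqref{eq:glip1}, and specializing to $v=0$ gives $\|\nabla^2 \hat g_y(0)\| \le L_1$ for all $y \in \calL$.

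When $g$ is three times continuously differentiable, $h$ is $\mathcal C^3$, and the third-order Taylor expansion $g(\Retr_y(u)) - g(y) - \inner{u}{\nabla \hat g_y(0)} - \tfrac12\inner{u}{\nabla^2 \hat g_y(0)[u]} = \tfrac16 h'''(\xi)$, together with $|h'''(\xi)| \le \|\D^3 \hat g_y(\xi u)\|\,\|u\|^3$, gives~\eqref{eq:glip2}, where $L_2$ is the maximum over $\calB$ of the continuous function $(y,v)\mapsto \|\D^3 \hat g_y(v)\|$ (the appropriate trilinear operator norm). For~\eqref{eq:glip2bis}, apply the fundamental theorem of calculus on $\T_y\calM$ to the gradient map $v \mapsto \nabla \hat g_y(v)$: $\nabla \hat g_y(u) - \nabla \hat g_y(0) - \nabla^2 \hat g_y(0)[u] = \int_0^1 \big(\nabla^2 \hat g_y(tu) - \nabla^2 \hat g_y(0)\big)[u]\,\dt$, and bound $\|\nabla^2 \hat g_y(tu) - \nabla^2 \hat g_y(0)\| \le L_2\, t\,\|u\|$ — legitimate because the whole segment $\{su : s\in[0,1]\}$ stays inside $\calB$ — so that integration yields the bound $\tfrac{L_2}{2}\|u\|^2$, after enlarging $L_2$ if necessary so that it also dominates the (continuous, hence bounded on $\calB$) Lipschitz modulus of $v \mapsto \nabla^2 \hat g_y(v)$.

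The only mildly delicate point is the compactness of $\calB$ inside the non-compact tangent bundle; everything else is routine Taylor estimation, with uniformity furnished by continuity on the compact set $\calB$. The step requiring the most care is verifying that all intermediate points appearing in the remainders — $\xi u$ and $su$ with $s \in [0,1]$ — remain in $\calB$, since this is exactly what makes the uniform ``maximum over $\calB$'' bounds applicable; this holds because $\deltamax(y)$ bounds $\|u\|$ and the segments shrink toward the origin of $\T_y\calM$.
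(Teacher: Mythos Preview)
Your proposal is correct and follows essentially the same approach as the paper's proof: establish compactness of $\calB$ from compactness of $\calL$ and continuity of $\deltamax$, then bound the relevant higher derivatives of $\hat g_y$ uniformly on $\calB$ by continuity-plus-compactness, and finally control the Taylor remainders on each tangent space. The paper's proof is a terse sketch of exactly this outline (it cites the fundamental theorem of calculus rather than Lagrange remainders, but that is a cosmetic difference), so your write-up is in fact a more detailed execution of the same argument.
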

\begin{proof}
	The compactness of $\calB$ follows from compactness of $\calL$ and continuity of $\deltamax$. 
	The inequalities hold because:
	\begin{enumerate}
		\item If $g$ is $k$ times continuously differentiable, so is $g \circ \Retr_y$ as a function on $\T_y\calM$.
		\item Thus, derivatives of $g \circ \Retr_y$ of order up to $k$ are bounded inside the disk of radius $\deltamax(y)$,
		\item And those bounds are continuous in $y$, which ranges over a compact set $\calL$.
		\item We deduce that there exists $L_{k-1}$ such that the $k$th derivative of $g \circ \Retr_y$ at $u$ is bounded by $L_{k-1}$ for all $(x, u) \in \calB$.
	\end{enumerate}
	From here, we bound residuals of truncated Taylor expansions via the fundamental theorem of calculus on each individual tangent space. 
\end{proof}
\begin{corollary} \label{cor:optimassumptionlipschitzhelper}
	Assumption~\ref{assu:upstairs} holds if the iterates of Algorithm~\ref{algo:TRbalanced} remain in a compact set and $g$ is three times continuously differentiable.
\end{corollary}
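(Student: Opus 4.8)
The plan is to invoke Lemma~\ref{lem:optimlipschitzlemma} with $\calL \subseteq \calM$ a compact set containing all the iterates $(y_k)$ (such a set exists by hypothesis, and being a compact subset of a Hausdorff space it is closed). The only nontrivial point to check is that the pairs $(y_k, s_k)$, where $s_k$ is the trial step produced in Step~\ref{step:sk} of Algorithm~\ref{algo:TRbalanced}, all lie in a tangent-bundle set of the form $\calB$ in~\eqref{eq:calB} for some continuous, nonnegative function $\deltamax$ on $\calL$; once that is established, the lemma does the rest.

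First I would record that, by construction, $\gamma_k \in [\underline{\gamma}, \overline{\gamma}]$ for every $k$, and that $\|s_k\| \leq \delta_k$ with $\delta_k$ equal to either $\gamma_k \|\nabla g(y_k)\|$ (Step~\ref{step:deltagrad}) or $\gamma_k |(\lambda_k)_-|$ (Step~\ref{step:deltalambda}), where $\lambda_k = \lambdamin(\nabla^2 \hat g_k(0))$. Hence $\|s_k\| \leq \overline{\gamma}\big( \|\nabla g(y_k)\| + |(\lambda_k)_-| \big)$ for all $k$. Next I would define $\lambda \colon \calM \to \reals$ by $\lambda(y) = \lambdamin(\nabla^2(g \circ \Retr_y)(0))$ and check that it is continuous: since $g$ is (at least) twice continuously differentiable and $\Retr$ is smooth, $y \mapsto \nabla^2(g \circ \Retr_y)(0)$ is a continuous field of symmetric operators on the tangent spaces, and the least eigenvalue of a symmetric operator depends continuously on that operator. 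Therefore $\deltamax(y) \coloneqq \overline{\gamma}\big( \|\nabla g(y)\| + |(\lambda(y))_-| \big)$ is continuous and nonnegative on $\calL$, and $(y_k, s_k) \in \calB$ for every $k$ with this choice of $\deltamax$.

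With this in place, Lemma~\ref{lem:optimlipschitzlemma} applied to the compact set $\calL$ and the function $\deltamax$ produces a constant $L_1$ (using that $g$ is $C^2$) with $\|\nabla^2 \hat g_y(0)\| \leq L_1$ for all $y \in \calL$ and such that~\eqref{eq:glip1} holds for all $(y, u) \in \calB$, and a constant $L_2$ (using that $g$ is $C^3$) such that~\eqref{eq:glip2} holds for all $(y,u) \in \calB$. Specializing these to $y = y_k$ and $(y, u) = (y_k, s_k)$ gives precisely the three estimates required in Assumption~\ref{assu:upstairs}, completing the proof.

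The one place where a little care is warranted is the continuity of $y \mapsto \lambda(y)$, i.e.\ making precise that $\nabla^2(g \circ \Retr_y)(0)$ varies continuously with $y$ — a routine smoothness fact about retractions, but the only step that is not pure bookkeeping. Everything else reduces to the explicit form of $\delta_k$ in Steps~\ref{step:deltalambda}--\ref{step:deltagrad} together with a direct appeal to Lemma~\ref{lem:optimlipschitzlemma}.
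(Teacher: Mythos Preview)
Your proof is correct and follows essentially the same route as the paper (which uses $\deltamax(y) = \overline{\gamma}\max(\|\nabla g(y)\|, |\lambdamin(\nabla^2\hat g_y(0))|)$ rather than your sum). One minor slip: it is not true ``by construction'' that $\gamma_k \geq \underline{\gamma}$ for every $k$ (after unsuccessful steps $\gamma_k$ can drop below $\underline{\gamma}$; the lower bound $\gamma_k \geq \gammamin$ is the content of Lemma~\ref{lem:optimgammamin}, which itself \emph{uses} Assumption~\ref{assu:upstairs}), but this does not matter since you only need the upper bound $\gamma_k \leq \overline{\gamma}$, which does hold by a trivial induction.
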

\begin{proof}
	With $\deltamax(y) = \overline{\gamma} \cdot \max\left( \|\nabla g(y)\|, |\lambdamin(\nabla^2 \hat g_y(0))| \right)$ (which is continuous), the claim follows from Lemma~\ref{lem:optimlipschitzlemma} because the trust-region radius $\delta_k$ at iterate $y_k$ is bounded by $\deltamax(y_k)$ so that $(y_k, s_k)$ is in $\calB$~\eqref{eq:calB}.
\end{proof}

\section{Lifts and rebalancing maps for tensor varieties}
\label{apdx:tensors}
In this appendix we elaborate on Example~\ref{ex:tensors}, by showing that two well-known tensor decompositions give smooth lifts of singular varieties, and that standard SVD-based normalization algorithms for the decompositions are  sectional rebalancing maps.


\subsection{Tucker decomposition}\label{ex:tucker}
Fix positive integers $n_1, \ldots, n_d$ and $r_1, \ldots, r_d$ such that $n_k \geq r_k$ for each $k=1, \ldots, d$.  
Let $\mathcal{M} = \mathbb{R}^{r_1 \times \cdots \times r_d} \times \mathbb{R}^{n_1 \times r_1} \times \cdots \times \mathbb{R}^{n_d \times r_d}$ 
with its usual Euclidean structure, and $\mathcal{E} = \mathbb{R}^{n_1 \times \cdots \times n_d}$.  
So $\mathcal{M}$ consists of tuples $\left(\mathcal{C}, M^{(1)}, \ldots, M^{(d)}\right)$ where $\mathcal{C} \in \mathbb{R}^{r_1 \times \cdots \times r_d}$, $M^{(k)} \in \mathbb{R}^{n_k \times r_k}$. We call $\mathcal{C}$ the \textup{core tensor} and $M^{(k)}$ the $k$-th \textup{factor matrix}. 
Of course, $\mathcal{M}$ is a complete manifold.  
Meanwhile, $\mathcal{E}$ consists of bigger-sized tensors $\mathcal{T} \in \mathbb{R}^{n_1 \times \cdots \times n_d}$.

Consider $\varphi : \mathcal{M} \rightarrow \mathcal{E}$ given by \textup{Tucker (or Multilinear) Tensor Decomposition} \cite{de2000multilinear}:
\begin{align*}
  &  \varphi(\mathcal{C}, M^{(1)}, \ldots, M^{(d)}) = \mathcal{C} \times_1 M^{(1)} \times_2 \cdots \times_d M^{(d)}, \\
 &  \textup{where}  \,\, \left( \mathcal{C} \times_1 M^{(1)} \times_2 \cdots \times_d M^{(d)} \right)_{i_1, \ldots, i_d} := 
\sum_{j_1 = 1}^{r_1} \cdots \sum_{j_d = 1}^{r_d} \mathcal{C}_{j_1, \ldots, j_d} M^{(1)}_{i_1, j_1} \cdots M^{(d)}_{i_d, j_d},
\end{align*}
for $1 \leq i_k \leq n_k$.
The image of $\varphi$ is a closed real algebraic variety.  
One says that $\mathcal{X} = \varphi(\mathcal{M})$ consists of the tensors $\mathcal{T}$ in $\mathbb{R}^{n_1 \times \cdots \times n_d}$ of \textup{Tucker (or Multilinear) Rank} entrywise less than or equal to $(r_1, \ldots, r_d)$.  

 It turns out that the \textup{Higher-Order Singular Value Decomposition (HOSVD)} gives a sectional rebalancing map for $\varphi$.  Specifically, we let $\Phi(\mathcal{C}, M^{(1)}, \ldots, M^{(d)})$ be the HOSVD of $\mathcal{T} := \varphi(\mathcal{C}, M^{(1)}, \ldots, M^{(d)})$.
 This means that 
$$
\Phi(\mathcal{C}, M^{(1)}, \ldots, M^{(d)}) := (\mathcal{S}, U^{(1)}, \ldots, U^{(d)}) \in \mathcal{M},
$$
where 
$U^{(k)} \in \textup{St}(n_k,r_k)$ is a matrix of $r_k$ leading left singular vectors\footnote{If there are repeated singular values, we arbitrarily but deterministically choose a basis of singular vectors.} of the matrix flattening of $\mathcal{T} = \varphi(\mathcal{C}, M^{(1)}, \ldots, M^{(d)})$ along the $k$-th mode, and $\mathcal{S} \in \mathbb{R}^{r_1 \times \ldots \times r_k}$ is the induced core tensor.  

The HOSVD construction is made precise as follows.
Using a hat to denote an omitted term, the \textup{matrix flattening of $\mathcal{T}$ along the $k$-th mode} lies in $\mathbb{R}^{n_k \times (n_1 \ldots \widehat{n}_k \ldots n_d)}$ and is defined by
$$
\flatten_{(k)}(\mathcal{T})_{i_k, \left(i_1, \ldots, \widehat{i}_k, \ldots, i_d\right)} := \mathcal{T}_{i_1, \ldots, i_d}.
$$
Then, each factor matrix $U^{(k)} \in \mathbb{R}^{n_k \times r_k}$ has orthonormal columns consisting of $r_k$ leading left singular vectors of $\flatten_{(k)}(\mathcal{T})$.  
With $U^{(k)}$ thus defined,  $\mathcal{S} \in \mathbb{R}^{r_1 \times \cdots \times r_d}$ in HOSVD is given by
$$
\mathcal{S} := \mathcal{T} \times_1 \left( U^{(1)} \right)^{\!\!\top} \times_2 \cdots \times_d \left( U^{(d)} \right)^{\!\!\top} \in \mathbb{R}^{r_1 \times \cdots \times r_d}.
$$

Then, $\Phi$ satisfies $\varphi = \varphi \circ \Phi$, since
\begin{align*}
& \left(\mathcal{T} \times_1 \left( U^{(1)} \right)^{\!\!\top} \times_2 \cdots \times_d \left( U^{(d)} \right)^{\!\!\top}\right) \times_1 U^{(1)}  \times_2 \cdots \times_d U^{(d)} \\
& = \mathcal{T} \, \times_1 \, U^{(1)}\!\left( U^{(1)} \right)^{\!\!\top} \, \times_2 \, \cdots \, \times_d \, U^{(d)}\! \left( U^{(d)} \right)^{\!\!\top} 
= \mathcal{T}.
\end{align*}
Here the second equality is because $U^{(k)}\!\left( U^{(k)}\right)^{\!\!\top} \in \mathbb{R}^{n_k \times n_k}$ is the orthogonal projector in $\mathbb{R}^{n_k}$ onto the column space of $U^{(k)}$, and this column space equals the column space of $\flatten_{(k)}(\mathcal{T})$ by construction of $U^{(k)}$.
Also, $\Phi$ maps all nonempty fibers of $\varphi$ to singletons, since the HOSVD algorithm described above only depends on $\mathcal{T}$.
Finally, the boundedness property in Definition~\ref{def:goodPhi}(b) is ensured by
\begin{multline*}
\left\|\Phi\left(\mathcal{C}, M^{(1)}, \ldots, M^{(d)} \right)\right\| = \left\| \left(\mathcal{S}, U^{(1)}, \ldots, U^{(d)} \right) \right\| = \sqrt{\|\mathcal{S}\|^2 + \sum_{k=1}^d \|U^{(k)}\|^2} 
\\ = \sqrt{\|\mathcal{S}\|^2 + \sum_{k=1}^d r_k}
= \sqrt{\|\mathcal{S} \times_1 U^{(1)} \times_1 \cdots \times_d U^{(d)} \|^2 + \sum_{k=1}^d r_k}
\\ = \sqrt{\| \varphi(\mathcal{C}, M^{(1)}, \ldots, M^{(d)}) \|^2 + \sum_{k=1}^d r_k }. 
\end{multline*}
Hence, HOSVD gives a sectional rebalancing map for Tucker Tensor Decomposition.

\subsection{Tensor train decomposition}\label{ex:train}
Fix positive integers $n_1, \ldots, n_d$ as well as $r_0, \ldots, r_d$ 
such that $r_0 = r_{d} = 1$ and $r_k \leq r_{k-1}n_k$ for each $k=1,\ldots,d-1$.  
Let $\mathcal{M} = \mathbb{R}^{r_0 \times n_1 \times r_1} \times \mathbb{R}^{r_1 \times n_2 \times r_2} \times \cdots \times \mathbb{R}^{r_{d-1} \times n_d \times r_{d}}$ with its usual Euclidean structure, and $\mathcal{E} = \mathbb{R}^{n_1 \times \cdots \times n_d}$.  
So $\mathcal{M}$ consists of tuples $(\mathcal{G}^{(1)}, \ldots, \mathcal{G}^{(d)})$ where $\mathcal{G}^{(k)} \in \mathbb{R}^{r_{k-1} \times n_k \times r_k}$, and we call $\mathcal{G}^{(k)}$ the $k$-th three-way \textup{core tensor}. 
Of course, $\mathcal{M}$ is a complete manifold. 
Meanwhile, $\mathcal{E}$ consists of $d$-way tensors $\mathcal{T} \in \mathbb{R}^{n_1 \times \cdots \times n_d}$.

Consider $\varphi : \mathcal{M} \rightarrow \mathcal{E}$ given by the \textup{Tensor Train Decomposition (TT-D)} \cite{oseledets2011tensor}:
$$
\varphi(\mathcal{G}^{(1)}, \mathcal{G}^{(2)}, \ldots, \mathcal{G}^{(d)})_{i_1, i_2, \ldots, i_d} := \sum_{j_0=1}^{r_0} \sum_{j_1=1}^{r_1} \ldots \sum_{j_d=1}^{r_d} \mathcal{G}^{(1)}_{j_0, i_1, j_1} \mathcal{G}^{(2)}_{j_1, i_2, j_2} \cdots \mathcal{G}^{(d)}_{j_{d-1}, i_d, j_d},
$$
for $1 \leq i_k \leq n_k$.  
An equivalent way of writing TT-D is by using products of matrices:
$$
\varphi(\mathcal{G}^{(1)}, \mathcal{G}^{(2)}, \ldots, \mathcal{G}^{(d)})_{i_1, i_2, \ldots, i_d} = \mathcal{G}^{(1)}(i_1) \, \mathcal{G}^{(2)}(i_2) \, \cdots \, \mathcal{G}^{(d)}(i_d). 
$$
Here $\mathcal{G}^{(k)}(i_k) \in \mathbb{R}^{r_{k-1} \times r_k}$ is the matrix slice of $\mathcal{G}^{(k)}$ with second index fixed to be $i_k$, and the product gives a $1 \times 1$ scalar (since for each $i_1$ and $i_d$, the slices $\mathcal{G}^{(1)}(i_1)$ and $\mathcal{G}^{(d)}(i_d)$ are of size $1 \times r_1$ and $r_{d-1} \times 1$ respectively, because $r_0 = r_d = 1$).  

The image of $\varphi$ is a closed real algebraic variety.  
One says that $\mathcal{X} = \varphi(\mathcal{M})$ consists of the tensors $\mathcal{T}$ in $\mathbb{R}^{n_1 \times \cdots \times n_d}$ with \textup{Tensor Train Rank (TT-rank)} entrywise less than or equal to $(r_1, \ldots, r_d)$.

It turns out that the standard
\textup{Tensor Train Singular Value Decomposition (TT-SVD)} gives a sectional rebalancing map for $\varphi$. 
Specifically, we let $\Phi(\mathcal{G}^{(1)}, \ldots, \mathcal{G}^{(d)})$ be the TT-SVD of $\mathcal{T} := \varphi(\mathcal{G}^{(1)}, \ldots, \mathcal{G}^{(d)})$, 
$$
\Phi(\mathcal{G}^{(1)}, \ldots, \mathcal{G}^{(d)}) := (\mathcal{U}^{(1)}, \ldots, \mathcal{U}^{(d)}) \in \mathcal{M}, 
$$
where $\mathcal{U}^{(k)} \in \mathbb{R}^{r_{k-1} \times n_k \times r_k}$ are generated according to the following sequence of matrix SVDs (and we use the same conventions for matrix SVD as in Example \ref{ex:tucker}). 
\begin{itemize}
\item First update $\mathcal{T} \leftarrow \operatorname{reshape}(\mathcal{T}, [n_1, n_2 \cdots n_d])$,
\footnote{Here $\operatorname{reshape}$ is as in standard Matlab/NumPy code, and reorders the tensor entries into another array with the same total number of entries, using the lexicographic ordering.  Note that the second argument specifies the dimensions of the reshaped tensor.} 
so that $\mathcal{T} \in \mathbb{R}^{n_1 \times n_2 \cdots n_d}$.  
Define $\mathcal{U}^{(1)} \in \textup{St}(n_1, r_1)$ to be given by $r_1$ leading left singular vectors of $\mathcal{T}$.  
Then update $\mathcal{T} \leftarrow  \left(\mathcal{U}^{(1)}\right)^{\!\top} \! \mathcal{T}$, so that $\mathcal{T} \in \mathbb{R}^{r_1 \times n_2 \cdots n_d}$.  
Then identify $\mathcal{U}^{(1)}$ with a point in $\mathbb{R}^{r_0 \times n_1 \times r_1}$ (as $r_0=1$).

\item Next update $\mathcal{T} \leftarrow \operatorname{reshape}(\mathcal{T}, [r_1n_2, n_3 \cdots n_d])$, so that $\mathcal{T} \in \mathbb{R}^{r_1n_2 \times n_3 \cdots n_d}$.  
Define $\mathcal{U}^{(2)} \in \textup{St}(r_1n_2, r_2)$ to be given by $r_2$ leading left singular vectors of $\mathcal{T}$.  
Then update $\mathcal{T} \leftarrow \left( \mathcal{U}^{(2)} \right)^{\!\top}\! \mathcal{T}$, so that $\mathcal{T} \in \mathbb{R}^{r_2 \times n_3 \cdots n_d}$.  
Then update $\mathcal{U}^{(2)} \leftarrow \operatorname{reshape}(\mathcal{U}^{(2)}, [r_1, n_2, r_2])$, so that $\mathcal{U}^{(2)} \in \mathbb{R}^{r_1 \times n_2 \times r_2}$.

\item Continue updating $\mathcal{T}$ and generating $\mathcal{U}^{(k)}$ for $k = 3, \ldots, d-1$ as in the previous bullet.

\item In the end, $\mathcal{T} \in \mathbb{R}^{r_{d-1} \times n_d}$.  
Let $\mathcal{U}^{(d)} = \mathcal{T}$ and identify this with a point in $\mathbb{R}^{r_{d-1}\times n_d \times r_d}$ (as $r_d = 1$).
\end{itemize}

With TT-SVD thus defined, we have $\varphi \circ \Phi = \varphi$.  
This is because for $\mathcal{T} = \varphi(\mathcal{G}^{(1)}, \ldots, \mathcal{G}^{(d)})$, the matrix SVD in the $k$-th step of TT-SVD has exact rank $\leq r_k$.  Also,
$\Phi$ maps the nonempty fibers of $\varphi$ to singletons, since TT-SVD depends only on $\mathcal{T}$.  
Finally, the boundedness property in Definition~\ref{def:goodPhi}(b) is ensured \nolinebreak by
\begin{align*}
    \left\| \Phi\left(\mathcal{G}^{(1)}, \ldots, \mathcal{G}^{(d)}\right) \right\|  
    = \left\| \left( \mathcal{U}^{(1)}, \ldots, \mathcal{U}^{(d)} \right)  \right\| 
    &= \sqrt{  \sum_{k=1}^{d-1} \left\| \mathcal{U}^{(k)} \right\|^2 \, + \, \left\| \mathcal{U}^{(d)} \right\|^2} 
    \\ &= \sqrt{\sum_{k=1}^{d-1} r_k \, + \, \left\|\varphi(\mathcal{G}^{(1)}, \ldots, \mathcal{G}^{(d)})\right\|^2}. 
\end{align*}
Here, the third equality follows from the fact that $\operatorname{reshape}(\mathcal{U}^{(k)},[r_{k-1}n_k, r_k])$ has orthonormal columns for $k = 1, \ldots, d-1$,
and that
  the variable $\mathcal{T}$ updated within TT-SVD has its norm preserved throughout the process, so $\|\mathcal{U}^{(d)}\| = \|\varphi(\mathcal{G}^{(1)}, \ldots, \mathcal{G}^{(d)}) \| $.
Thus, TT-SVD gives a sectional rebalancing map for Tensor Train Decomposition.
\section{Sharpness of Theorem~\ref{thm:stable_2implies1_LR}}\label{apdx:stable_LR_sharp}
We show through a simple example
that the bounds in Theorem~\ref{thm:stable_2implies1_LR} are sharp up to constants.
Let $f(X) = \frac{1}{2}\|X\|^2$.
Note that $\nabla f(X) = X$ so $L_f = 1$.
Let $X = \tt{diag}(\epsilon^{2/3},0,\ldots,0)\in\RR^{m\times n}$ for $\epsilon>0$.
We view $X$ as a point of $\Rmnlr$ with $r \geq 1$ arbitrary.
Let $L\in\RR^{m\times r}$ and $R\in\RR^{n\times r}$ be defined as $\tt{diag}(\epsilon^{1/3},0,\ldots,0)$ in their respective spaces.
We clearly have $L^\top L = R^\top R$ and $LR^\top = X$.
We can compute
\begin{align*}
    \|\nabla g(L,R)\| = \sqrt{\|L^\top\nabla f(X)\|^2 + \|\nabla f(X)R\|^2} = \sqrt{2}\epsilon.
\end{align*}
Also, for any $\dot L,\dot R$, 
\begin{align*}
    \langle \nabla^2g(L,R)[\dot L,\dot R],[\dot L,\dot R] \rangle & = \langle\nabla^2f(X)[L\dot R^\top\! + \dot LR^\top],L\dot R^\top\! + \dot LR^\top \rangle + 2\langle\nabla f(X),\dot L\dot R^\top \rangle\\
    & \geq \|L\dot R^\top\! + \dot LR^\top \|^2 - 2\|X\|\cdot\|\dot L\dot R^\top \| \\
    & \geq - 2\epsilon^{2/3}\|\dot L\|\|\dot R\|
      \geq -\epsilon^{2/3}(\|\dot L\|^2 + \|\dot R\|^2).
\end{align*}
where the last inequality follows from AM-GM.
Thus, $(L, R)$ is an $(\epsilon_1, \epsilon_2)$-approximate 2-critical point for $g(L, R) = f(LR^\top)$ with $\epsilon_1 = \sqrt{2}\epsilon$ and $\epsilon_2 = \epsilon^{2/3}$.
We also have $\|\nabla f(X)\| = \|\nabla f(X)\|_{\mathrm{op}} = \epsilon^{2/3}$. 

Let us assess this example against Theorem~\ref{thm:stable_2implies1_LR}.
If $r > 1$, then $\sigma_r(X) = 0$.
In this case, we see that inequalities~\eqref{eq:opnormnablafXbound}-\eqref{eq:normProjTXnegnablabound} hold with equality.
On the other hand, if $r = 1$ then $\sigma_r(X) = \epsilon^{2/3}$.
Inequality~\eqref{eq:opnormnablafXbound} states
\begin{align*}
    \epsilon^{2/3} = \|\nabla f(X)\|_{\mathrm{op}} \leq \epsilon_2 + 2L_f\sigma_r(X) = \epsilon^{2/3} + 2 \epsilon^{2/3} = 3 \epsilon^{2/3}.
\end{align*}
Thus, that inequality is only off by a factor 3.
Inequality~\eqref{eq:normProjTXnegnablabound} states
\begin{align*}
    \epsilon^{2/3} = \|\nabla f(X)\| &= \|\Proj_{\T_X\Rmnlr}(-\nabla f(X))\|\\ &\leq \min\left\{\sqrt{\frac{2}{\sigma_r(X)}} \epsilon_1, \sqrt{\rank(\nabla f(X))}(\epsilon_2 + 2L_f\sigma_r(X))\right\}\\ &=\min\{2\epsilon^{2/3}, 3\epsilon^{2/3}\} = 2 \epsilon^{2/3}.
\end{align*}
Thus, that inequality is only off by a factor 2.

\section{Clarke regularity is preserved by local diffeomorphisms}
\label{apdx:clarke_regularity_loc_diffeo}
We first observe that diffeomorphisms induce invertible linear maps on tangent cones:
\begin{lemma} \label{lem:map_of_tangent_cones_and_duals}
    Suppose $h:V\to U$ is a $\mc C^1$ diffeomorphism defined on open sets $V\subseteq\calE'$ and $U\subseteq\calE$, where $\calE,\ \calE'$ are linear spaces. Suppose $Y\subseteq V$ is any set, $X=h(Y)\subseteq U$ is its image, $y\in Y$ is arbitrary and $x=h(y)\in X$. Then $\D h(x): \T_yY\to \T_xX$ is an invertible linear map between the two tangent cones.
\end{lemma}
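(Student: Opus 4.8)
The plan is to show that $\D h(y) \colon \calE' \to \calE$ restricts to a bijection between $\T_y Y$ and $\T_x X$; invertibility of that restriction then follows because $\D h(y)$ is itself an invertible linear map (as $h$ is a diffeomorphism) and its inverse $\D h(y)^{-1} = \D (h^{-1})(x)$ will be shown, by the same argument applied to $h^{-1}$, to send $\T_x X$ into $\T_y Y$. So the crux is the single inclusion $\D h(y)(\T_y Y) \subseteq \T_x X$, applied also to $h^{-1}$.

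To prove that inclusion, I would work directly from Definition~\ref{def:tangentcone}. Take $v \in \T_y Y$, so there are sequences $(y_k) \subseteq Y$ and $(\tau_k)$ with $\tau_k > 0$, $\tau_k \to 0$, and $v = \lim_k (y_k - y)/\tau_k$. In particular $y_k \to y$. Set $x_k = h(y_k) \in X$. Since $h$ is differentiable at $y$, we have $h(y_k) = h(y) + \D h(y)[y_k - y] + o(\|y_k - y\|)$. Dividing by $\tau_k$,
\begin{align*}
    \frac{x_k - x}{\tau_k} = \D h(y)\!\left[\frac{y_k - y}{\tau_k}\right] + \frac{o(\|y_k - y\|)}{\tau_k}.
\end{align*}
The first term converges to $\D h(y)[v]$ by continuity (linearity) of $\D h(y)$. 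For the remainder term, note $\|y_k - y\|/\tau_k \to \|v\|$ is bounded, so $o(\|y_k - y\|)/\tau_k = \frac{o(\|y_k-y\|)}{\|y_k-y\|} \cdot \frac{\|y_k-y\|}{\tau_k} \to 0$ (with the convention that the ratio is $0$ when $y_k = y$). Hence $(x_k - x)/\tau_k \to \D h(y)[v]$, which exhibits $\D h(y)[v] \in \T_x X$ using the same sequence $(\tau_k)$. This proves $\D h(y)(\T_y Y) \subseteq \T_x X$.

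Applying the identical argument to the diffeomorphism $h^{-1} \colon U \to V$ at the point $x$ with image set $Y = h^{-1}(X)$ gives $\D (h^{-1})(x)(\T_x X) \subseteq \T_y Y$. Since $\D(h^{-1})(x) = \D h(y)^{-1}$ by the chain rule, combining the two inclusions yields $\D h(y)(\T_y Y) = \T_x X$, and the restriction of the invertible linear map $\D h(y)$ to $\T_y Y$ is therefore a linear bijection onto $\T_x X$ with inverse the restriction of $\D h(y)^{-1}$. (Here I should be careful about notation: the statement writes $\D h(x)$, but the intended map is the differential of $h$ at the point $y$ in the domain; I would phrase it cleanly as ``$\D h(y)$'' or match the paper's conventions.) I do not expect any serious obstacle: the only mildly delicate point is handling the little-$o$ remainder uniformly, which the boundedness of $\|y_k - y\|/\tau_k$ takes care of, and noting that cones need no scaling adjustment since $\D h(y)$ is linear and hence automatically maps cones to cones.
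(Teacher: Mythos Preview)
Your proof is correct and follows essentially the same approach as the paper's: show $\D h(y)$ maps $\T_y Y$ into $\T_x X$ directly from the sequential definition of the tangent cone, then apply the same argument to $h^{-1}$ to obtain the inverse map. Your treatment is in fact more careful than the paper's (you make the little-$o$ remainder explicit), and you rightly flag the typo $\D h(x)$ versus $\D h(y)$ in the statement.
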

\begin{proof}
    $\D h(y)$ is a linear map, and it maps $\T_yY$ to $T_xX$: If $v\in\T_yY$ then by Definition~\ref{def:tangentcone} we can write $v=\lim_k\frac{y_k-y}{\tau_k}\in \T_yY$ where $(y_k)_{k\geq 1}$ is a sequence converging to $y$ and $\tau_k\to0$. Because $h$ is $\mc C^1$, we have $\D h(y)[v] = \lim_k\frac{h(x_k)-h(x)}{\tau_k}\in T_xX$. Since $h$ is a $\mc C^1$ diffeomorphism, it has a $\mc C^1$ inverse $h^{-1}:X\to Y$ sending $h^{-1}(x)=y$, so its differential maps $\D h(x)^{-1}:T_xX\to T_yY$. The two maps of tangent cones are inverses of each other.
\end{proof}

\begin{proposition}\label{prop:sets_with_nbhds_diff_to_Clarke}
Suppose $\calX\subseteq\calE$ and that $x\in \calX$ has a neighborhood $U$ in $\calX$ that is $\mc C^1$-diffeomorphic to a Clarke regular set $Y$, meaning there is a $\mc C^1$ diffeomorphism $h$ defined on a neighborhood of $U$ in $\calE$ that maps $U$ to $Y$. Then $\calX$ is Clarke regular at $x$. In particular, $x$ is not apocalyptic in $\calX$.
\end{proposition}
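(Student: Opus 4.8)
The plan is to reduce the statement to the fact that Clarke regularity depends only on the tangent cones of the set, together with Lemma~\ref{lem:map_of_tangent_cones_and_duals}, which says that a $\mathcal{C}^1$ diffeomorphism between (neighborhoods of) sets induces linear isomorphisms between their respective tangent cones. Since Clarke regularity and the apocalypse conditions are all phrased purely in terms of the tangent cones $\T_{x}\calX$ and their behavior along convergent sequences, transporting everything through the differential $\D h$ should do the job.

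Concretely, I would first note that Clarke regularity is a local property: $\calX$ is Clarke regular at $x$ if and only if $U = \calX \cap (\text{neighborhood of }x)$ is Clarke regular at $x$, because the defining condition $\T_x\calX \subseteq \liminf_k \T_{x_k}\calX$ only involves sequences $x_k \to x$, which eventually lie in any neighborhood of $x$, and tangent cones are themselves local objects. So it suffices to show $U$ is Clarke regular at $x$. Next, take an arbitrary sequence $(u_k)_{k\geq1}$ in $U$ converging to $x$, and set $y = h(x)$, $y_k = h(u_k)$; since $h$ is a diffeomorphism and continuous, $y_k \to y$ in $Y$, and conversely every sequence in $Y$ converging to $y$ arises this way. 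By Lemma~\ref{lem:map_of_tangent_cones_and_duals}, $\D h(x) \colon \T_x U \to \T_y Y$ and each $\D h(u_k) \colon \T_{u_k}U \to \T_{y_k}Y$ are linear isomorphisms.

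The main technical point — and the step I expect to require the most care — is checking that the isomorphisms $\D h(u_k)$ interact correctly with the inner/outer limit operations of Definition~\ref{def:lims_of_sets}: that is, that $\D h(x)\big(\liminf_k \T_{u_k}U\big) = \liminf_k \T_{y_k}Y$ (and similarly for $\limsup$). This follows because $\D h$ is $\mathcal{C}^1$ hence $\D h(u_k) \to \D h(x)$ in operator norm, so for any $v$ one can compare $\dist(\D h(x)[v], \T_{y_k}Y)$ with $\dist(v, \T_{u_k}U)$ up to a factor controlled by the (locally bounded) operator norms of $\D h$ and $\D h^{-1}$ near $x$; since these are bounded away from $0$ and $\infty$ on a compact neighborhood, $\dist(\D h(x)[v], \T_{y_k}Y) \to 0$ if and only if $\dist(v, \T_{u_k}U) \to 0$. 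Granting this, Clarke regularity of $Y$ at $y$, namely $\T_y Y \subseteq \liminf_k \T_{y_k}Y$, pulls back under $\D h(x)^{-1}$ to $\T_x U \subseteq \liminf_k \T_{u_k}U$. Since $(u_k)$ was arbitrary, $U$ — and hence $\calX$ — is Clarke regular at $x$.

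Finally, the ``in particular'' clause is immediate: by Corollary~\ref{cor:apocalypse_suff_cond}, Clarke regularity at $x$ implies $x$ is not apocalyptic. I would remark that the same diffeomorphism-invariance argument shows more generally that apocalypticity itself (not just Clarke regularity) is preserved under local diffeomorphisms, via the characterization in Theorem~\ref{thm:apocalypse_char} in terms of $\overline{\mathrm{conv}}(\limsup_k \T_{x_k}\calX)$ — linear isomorphisms commute with closed convex hulls — but for the stated proposition only the one direction is needed.
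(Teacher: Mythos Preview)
Your proposal is correct and follows essentially the same approach as the paper: both use Lemma~\ref{lem:map_of_tangent_cones_and_duals} to identify the tangent cones through $\D h$, then exploit the continuity of $\D h$ (operator-norm convergence $\D h(u_k)\to\D h(x)$) to transfer the inclusion $\T_yY\subseteq\liminf_k\T_{y_k}Y$ back to $\T_xU\subseteq\liminf_k\T_{u_k}U$. The paper carries this out element-by-element (pick $u\in\T_x\calX$, pull it across, approximate, push the approximants back and use a triangle-inequality estimate), whereas you package the same computation as the statement ``$\D h(x)$ commutes with $\liminf$''; the underlying analysis is identical.
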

\begin{proof}
Let $(x_k)_{k\geq 1}\subseteq\calX$ is a sequence converging to $x$. After dropping finitely many terms in the sequence we may assume $x_k\in U$ for all $k$. Let $y=h^{-1}(x)\in Y$ and $y_k=h^{-1}(x_k)$. Since $h^{-1}$ is continuous, we have $y_k\to y$.

By Lemma~\ref{lem:map_of_tangent_cones_and_duals}, $\D h(y)\colon\T_yY\to\T_x\calX$ is surjective, hence for any $u\in \T_x\calX$, we can find $v\in\T_yY$ satisfying $u=\D h(y)[v]$. Because $Y$ is Clarke regular, we have $\T_yY\subseteq \liminf_k\T_{y_k}Y$. We can therefore write $v=\lim_kv_k$ where $v_k\in \T_{y_k}Y$. Because $y\mapsto \D h(y)$ is continuous, we have 
\begin{align}
    \|\D h(y_k)[v_k] - \D h(y)[v]\| &\leq \|(\D h(y_k) - \D h(y))[v_k]\| + \|\D h(y)[v_k-v]\|\\&\leq \|\D h(y_k)-\D h(y)\|\cdot\|v_k\| + \|\D h(y)\|\cdot\|v_k-v\|,
\end{align} 
which vanishes as $k\to\infty$, hence $\lim_k\D h(y_k)[v_k]=\D h(y)[v]=u$. Therefore, if we set $u_k=\D h(y_k)[v_k]\in\T_{x_k}\calX$ we have $u=\lim_ku_k$. This shows $\T_x\calX\subseteq\liminf_k\T_{x_k}\calX$, hence $\calX$ is Clarke regular at $x$.
\end{proof}

\end{document}